\renewcommand{\Re}{\operatorname{Re}}
\newtheorem{theorem}{Theorem}
\theoremstyle{plain}
\newtheorem{claim}[theorem]{Claim}
\newtheorem{corollary}[theorem]{Corollary}
\newtheorem{definition}[theorem]{Definition}
\newtheorem{lemma}[theorem]{Lemma}
\newtheorem{proposition}[theorem]{Proposition}
\newtheorem{remark}[theorem]{Remark}
\numberwithin{equation}{section}
\numberwithin{theorem}{section}  
\begin{document}
\title[Optimal $H^{1}$ $N$-body Convergence Rate via Hierarchy]{Quantitative
Derivation and Scattering of the \\
3D Cubic NLS in the Energy Space}
\author{Xuwen Chen}
\address{Department of Mathematics, University of Rochester, Rochester, NY
14627}
\email{xuwenmath@gmail.com}
\urladdr{http://www.math.rochester.edu/people/faculty/xchen84/}
\author{Justin Holmer}
\address{Department of Mathematics, Brown University, 151 Thayer Street,
Providence, RI 02912}
\email{justin$\_$holmer@brown.edu}
\urladdr{http://www.math.brown.edu/jholmer/}
\date{}
\subjclass[2010]{Primary 35P25, 35Q55, 81V70; Secondary 35A23, 35B45, 81Q05.}
\keywords{$N$-body quantum BBGKY hierarchy, convergence rate,
Klainerman-Machedon theory, nonlinear scattering, Koch-Tataru $U$-$V$ spaces}

\begin{abstract}
We consider the derivation of the {defocusing cubic nonlinear Schr\"{o}%
dinger equation (NLS) on $\mathbb{R}^{3}$} from quantum $N$-body dynamics.
We reformat the hierarchy approach with Klainerman-Machedon theory and prove
a bi-scattering theorem for the NLS to obtain convergence rate estimates
under $H^{1}$ regularity. The $H^{1}$ convergence rate estimate we obtain is
almost optimal for $H^{1}$ datum, and immediately improves if we have any
extra regularity on the limiting initial one-particle state.
\end{abstract}

\maketitle

\section{Introduction}

The aim of this paper is to close, with a simple and short argument, the
regularity gap that is currently present in the literature on the derivation
of the cubic nonlinear Schr\"{o}dinger equation (NLS) from quantum many-body
dynamics on $\mathbb{R}^{3}$. Let us write the cubic NLS 
\begin{equation}
\begin{aligned} &i\partial _{t}\phi = -\Delta _{x}\phi +b_{0}\left\vert \phi
\right\vert ^{2}\phi \text{ in }\mathbb{R}^{3+1} \\ &\phi (0,x) =\phi
_{0}(x) \end{aligned}  \label{NLS:opening cubic}
\end{equation}%
and the linear $N$-body Schr\"{o}dinger equation 
\begin{equation}
i\partial _{t}\psi _{N}=H_{N}\psi _{N}\text{ in }\mathbb{R}^{3N+1}
\label{eqn:N-body}
\end{equation}%
where the $N$-body Hamiltonian is%
\begin{equation}
H_{N}=\sum_{j=1}^{N}-\Delta _{x_{j}}+\frac{1}{N}\sum_{i<j}N^{3\beta
}V(N^{\beta }(x_{i}-x_{j}))  \label{hamitonian:N-body}
\end{equation}%
and define the marginal densities $\gamma _{N}^{(k)}$ associated with $\psi
_{N}$ in kernel form by%
\begin{equation}
\gamma _{N}^{(k)}\left( t,\mathbf{x}_{k},\mathbf{x}_{k}^{\prime }\right)
=\int \psi _{N}(t,\mathbf{x}_{k},\mathbf{x}_{N-k})\bar{\psi}_{N}(t,\mathbf{x}%
_{k}^{\prime },\mathbf{x}_{N-k})d\mathbf{x}_{N-k}  \label{def:marginals}
\end{equation}%
where $\mathbf{x}_{k}=(x_{1},...,x_{k})\in \mathbb{R}^{3k}$. The main object
of study in the derivation is, then, the $N\rightarrow \infty $ limit%
\begin{equation}
\gamma _{N}^{(k)}(t)\rightarrow \left\vert \phi (t)\right\rangle
\left\langle \phi (t)\right\vert ^{\otimes k}  \label{limit:mean field limit}
\end{equation}%
in operator form\footnote{%
As usual, in the notation, we do not distinguish the kernel and the operator
it defines.}, where $\phi (t)$ is given by (\ref{NLS:opening cubic}),
provided that $\gamma _{N}^{(1)}(0)\rightarrow \left\vert \phi
_{0}\right\rangle \left\langle \phi _{0}\right\vert $. Limit (\ref%
{limit:mean field limit}) was first rigorously justified in \cite%
{E-S-Y2,E-S-Y5,E-S-Y3} assuming $H^{1}$ regularity via the now so-called
hierarchy method that concluded convergence but with no estimates on the
convergence rate as it was via a compactness argument. Later on, the work 
\cite{BOB,GM1} pioneered the study of the rate of convergence in limit (\ref%
{limit:mean field limit}) via the theory of Bogoliubov rotation /
metaplectic representations and the now so-called Fock space method, but it
requires at least $H^{4}$ regularity. We will explain more of these two
methods later in the paper, but it is obvious that there is, at the moment,
a significant regularity gap: $H^{1}$ vs $H^{4}$, between proving limit (\ref%
{limit:mean field limit}) holds and proving limit (\ref{limit:mean field
limit}) holds with a rate.\footnote{%
The gap could be less severe in 1D and 2D as the corresponding critical
regularity drops.} It is certainly of mathematical interest to reduce the
required regularity and provide an optimal result. At the same time, there
are physical reasons to eliminate this gap.

The physical background of these derivational problems is the Bose-Einstein
condensate, also called the fifth state of matter, first experimentally
discovered in 1995 \cite{AEMWC,DMADDKK} after the prediction by {Einstein}.
In this context, the initial datum $\psi _{N}(0)$ of (\ref{eqn:N-body})
represents a trapped $N$-particle gas cooled very close to absolute zero
during the preparation phase and the dynamics $\psi _{N}(t)$ is the
evolution of the system during the observation phase after the confinement
is switched. That is, $\psi _{N}(0)$ is (or is very near) the ground state
of a $N$-body Schr\"{o}dinger operator with an external trapping potential
and hence its smoothness fully depends on the variable coefficients inside
the $N$-body Schr\"{o}dinger operator, which is mainly the trapping
potential in this case. In the original qualitative experiments \cite%
{AEMWC,DMADDKK}, the trap was generated by a strong magnetic field which is
smooth by definition. However, since around 1997, the experiments -- see 
\cite{SOT,SACIMSK} for examples -- have been instead favoring a pulse-type
laser trapping, as it produces less background noise for quantitative
measurement and gives more control of the parameters of the system. However,
due to the discrete / pulse nature and the complicated deployment of the
technology, such an optical confinement is not very smooth and can only be
approximated as harmonic $\omega _{0}^{2}\left\vert x\right\vert ^{2}$ when
far off.\footnote{%
See \cite{N1} for some locally half-circle shaped or paralleled-tube shaped
examples.} That is, away from the usual difficulties in measuring a high
Sobolev norm of a microscopic quantum mechanical system, the initial datum $%
\psi _{N}(0)$ of (\ref{eqn:N-body}) may not be very smooth at all due to the
setup of the system. On the other hand, it is always safe to assume the $%
H^{1}$ condition as every particle in the system must have finite kinetic
and potential energy which are also primary characteristics of the system.
It is, therefore, of substantial physical interest to close the
aforementioned regularity gap.

In this paper, we address the issue of the regularity gap using the
hierarchy method in the Klainerman-Machedon theory format and refining an
idea from the Fock space method. Let $S^{(\alpha
,k)}=\dprod\limits_{j=1}^{k}\left\langle \nabla _{x_{j}}\right\rangle
^{\alpha }\left\langle \nabla _{x_{j}^{\prime }}\right\rangle ^{\alpha }$ as
usual, we define our master norm $\left\Vert \cdot \right\Vert
_{H_{Z}^{\alpha }}$ for a hierarchy of marginal densities $\Gamma =\left\{
\gamma ^{(k)}\right\} _{k=1}^{\infty }$, following \cite{TCNP1}-\cite{TCKT},
by%
\begin{equation}
\left\Vert \Gamma \right\Vert _{H_{Z}^{\alpha }}=\sum_{k=1}^{\infty
}Z^{-k}\left\Vert S^{(\alpha ,k)}\gamma ^{(k)}\right\Vert _{L^{2}}.
\label{norm:master capital}
\end{equation}%
{We note that this norm is guaranteed to converge for $Z>C$ provided that
for all $k\geq 1$, $\Vert S^{(\alpha ,k)}\gamma ^{(k)}\Vert _{L^{2}}\leq
C^{k}$} and we will only use the norm when the condition $\Vert S^{(\alpha
,k)}\gamma ^{(k)}\Vert _{L^{2}}\leq C^{k}$ is known to hold, and thus we
call it a \textquotedblleft norm\textquotedblright . We will assume the
following usual conditions for our main theorem under norm (\ref{norm:master
capital}):

\begin{itemize}
\item[(a)] $\psi _{N}(0)$ is normalized, that is $\left\Vert \psi
_{N}(0)\right\Vert _{L^{2}(\mathbb{R}^{3N})}=1$ or $\limfunc{Tr}\gamma
_{N}^{(k)}(0)=1.$
\end{itemize}

\begin{enumerate}
\item[(b)] The uniform energy bounds hold:\footnote{%
One can use either (\ref{H1 condition:Energy}) or (\ref{H1 condition:H1}) as
(b).} 
\begin{equation}
\left\langle \psi _{N}(0),\left( H_{N}/N\right) ^{k}\psi
_{N}(0)\right\rangle \leqslant E_{0}^{k}  \label{H1 condition:Energy}
\end{equation}%
which, as {shown by \cite{E-S-Y2}, implies that for all $k\geq 1$ and all $t$%
, $\Vert S^{(1,k)}\gamma _{N}^{(k)}\Vert _{L^{2}}\leq 2^{k}E_{0}^{k}$, which
further implies} 
\begin{equation}
\sup_{t\in \left[ 0,T\right] }\left\Vert \Gamma _{N}(t)\right\Vert
_{H_{Z}^{1}}<+\infty .  \label{H1 condition:H1}
\end{equation}%
{for any $Z>2E_{0}$.}

\item[(c)] For some $Z_{0}>2E_{0}$, and for some $\phi _{0}\in H^{1}(\mathbb{%
R}^{3})$, the initial condition is asymptotically factorized: 
\begin{equation*}
\lim_{N\rightarrow \infty }\left\Vert \Gamma _{N}(0)-\left\{ \left\vert \phi
_{0}\right\rangle \left\langle \phi _{0}\right\vert ^{\otimes k}\right\}
\right\Vert _{H_{Z_{0}}^{1}}=0.
\end{equation*}
\end{enumerate}

Our main theorem is the following.

\begin{theorem}[Main Theorem]
\label{thm:convergence rate main theorem}Assume the marginal densities $%
\Gamma _{N}=\{\gamma _{N}^{(k)}\}$ associated with $\psi _{N}$, the solution
to the $N$-body dynamics (\ref{eqn:N-body}) with a smooth even pair
interaction $V\geqslant 0$, satisfy (a)-(c). Then for $T=c_{1}E_{0}^{-2}$
and $Z=c_{2}E_{0}$ for some specific multiples $c_{1}$, $c_{2}$, we have the
estimate 
\begin{equation}
\begin{aligned} \hspace{0.3in}&\hspace{-0.3in} \sup_{t\in \left[ 0,T\right]
}\left\Vert \Gamma _{N}(t)-\left\{ \left\vert \phi (t)\right\rangle
\left\langle \phi (t)\right\vert ^{\otimes k}\right\} \right\Vert
_{H_{Z}^{1}} \\ &\lesssim \|\Gamma_N(0)-\left\{ \left\vert \phi
_{0}\right\rangle \left\langle \phi _{0}\right\vert ^{\otimes k}\right\}
\|_{H^1_{Z/2}} + \max( N^{\frac52\beta-1}, N^{-\beta}(\ln N)^7) \end{aligned}
\label{estimate:convergence rate main estimate for H1 with exp in time}
\end{equation}%
where $\phi (t)$ solves (\ref{NLS:opening cubic}) with $\phi (0)=\phi _{0}$
and $b_{0}=\int V$.
\end{theorem}

The proof of Theorem \ref{thm:convergence rate main theorem} certainly
allows general datum as usual. In the context of the quantum de Finetti
theorem in \cite{CHPS} from \cite{LMR}, Theorem \ref{thm:convergence rate
main theorem} reads as follows.

\begin{corollary}[General Datum]
\label{cor:convergence rate main theorem}Assume the marginal densities $%
\Gamma _{N}=\{\gamma _{N}^{(k)}\}$ associated with $\psi _{N}$, the solution
to the $N$-body dynamics (\ref{eqn:N-body}) with $V\geqslant 0,$ satisfy
(a), (b) and

(c') For some $E_{0}>0$, and for some probability measure $d\mu _{0}$
supported on $\mathbb{S(}L^{2}(\mathbb{R}^{3}))$, we have 
\begin{equation*}
\lim_{N\rightarrow \infty }\left\Vert \Gamma _{N}(0)-\Gamma _{\infty
}(0)\right\Vert _{H_{E_{0}}^{1}}=0.
\end{equation*}%
where 
\begin{equation*}
\Gamma _{\infty }(0)=\left\{ \int_{\mathbb{S(}L^{2}(\mathbb{R}%
^{3}))}\left\vert \phi \right\rangle \left\langle \phi \right\vert ^{\otimes
k}d\mu _{0}(\phi )\right\}
\end{equation*}%
Then for $T=c_{1}E_{0}^{-2}$ and $Z=c_{2}E_{0}$ for some specific multiples $%
c_{1}$, $c_{2}$, we have the estimate 
\begin{equation*}
\sup_{t\in \left[ 0,T\right] }\left\Vert \Gamma _{N}(t)-\Gamma _{\infty
}(t)\right\Vert _{H_{Z}^{1}}\lesssim \Vert \Gamma _{N}(0)-\Gamma _{\infty
}(0)\Vert _{H_{Z/2}^{1}}+\max (N^{\frac{5}{2}\beta -1},N^{-\beta }(\ln
N)^{7})
\end{equation*}%
where 
\begin{equation*}
\Gamma _{\infty }(t)=\left\{ \int_{\mathbb{S(}L^{2}(\mathbb{R}%
^{3}))}\left\vert S_{t}\phi \right\rangle \left\langle S_{t}\phi \right\vert
^{\otimes k}d\mu _{0}(\phi )\right\}
\end{equation*}%
and $S_{t}:H^{1}(\mathbb{R}^{3})\rightarrow H^{1}(\mathbb{R}^{3})$, $\forall
t\in \mathbb{R}$,\footnote{%
Condition (c) implies $\Gamma _{\infty }(0)\in H_{E_{0}}^{1}$ which implies $%
d\mu _{0}$ is supported in the subset of $\mathbb{S(}L^{2}(\mathbb{R}^{3}))$
in which $\left\Vert \phi \right\Vert _{H^{1}(\mathbb{R}^{3})}\leqslant
E_{0} $. Hence $S_{t}\phi $ is well-defined inside the $d\mu _{0}$ integral.}
is the solution map of (\ref{NLS:opening cubic}).
\end{corollary}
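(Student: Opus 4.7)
The estimate in Theorem~\ref{thm:convergence rate main theorem} is really a quantitative stability statement comparing $\Gamma_N$ against \emph{any} solution of the cubic Gross--Pitaevskii (GP) hierarchy of appropriate regularity; the pure factorized trajectory $\{|\phi(t)\rangle\langle\phi(t)|^{\otimes k}\}$ appears there only as one convenient choice of reference. My plan is therefore to (i) verify that the de Finetti average $\Gamma_\infty(t)$ defined in (c') is itself a cubic GP-hierarchy solution in the required $H^1_{Z/2}$ class, and then (ii) run the proof of Theorem~\ref{thm:convergence rate main theorem} essentially verbatim with $\Gamma_\infty$ as the reference in place of the pure factorized trajectory.

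\textbf{Step 1: $\Gamma_\infty$ solves the cubic GP hierarchy in $H^1_Z$.} By the footnote in the statement, $d\mu_0$ is supported in $\{\phi\in\mathbb{S}(L^2(\mathbb{R}^3)):\|\phi\|_{H^1}\leq E_0\}$. Since $b_0=\int V\geq 0$, equation (\ref{NLS:opening cubic}) is defocusing cubic NLS on $\mathbb{R}^3$, and conservation of mass and energy yield $\sup_t\|S_t\phi\|_{H^1}\leq CE_0$ uniformly for $\phi\in\supp(\mu_0)$. Minkowski in $d\mu_0$ then gives
\[
\|S^{(1,k)}\Gamma_\infty^{(k)}(t)\|_{L^2}\leq \int\|S^{(1,k)}|S_t\phi\rangle\langle S_t\phi|^{\otimes k}\|_{L^2}\,d\mu_0(\phi)\leq (CE_0)^{2k},
\]
so $\sup_t\|\Gamma_\infty(t)\|_{H^1_Z}<\infty$ for $Z>2CE_0$. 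Each pure family $\{|S_t\phi\rangle\langle S_t\phi|^{\otimes k}\}$ solves the cubic GP hierarchy (a direct consequence of $S_t\phi$ solving (\ref{NLS:opening cubic})), so by linearity of the GP hierarchy and Fubini the average $\Gamma_\infty$ solves it as well.

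\textbf{Step 2: apply the Theorem~\ref{thm:convergence rate main theorem} stability estimate with $\Gamma_\infty$ as reference.} I would then reinspect the proof of Theorem~\ref{thm:convergence rate main theorem}. Its core is a Duhamel expansion of the BBGKY hierarchy for $\Gamma_N$, a Klainerman--Machedon board-game pairing against the Duhamel expansion of the reference GP solution, the bi-scattering estimate to close the resulting multilinear bounds on the GP side, and the $N$-dependent BBGKY-vs-GP error producing $\max(N^{5\beta/2-1},N^{-\beta}(\ln N)^7)$. At no stage is the rank-one structure of the reference used: what enters are the reference's Duhamel expansion, its $H^1_{Z/2}$-norm, and the bi-scattering bound, all of which survive $d\mu_0$-averaging by Minkowski and Step~1. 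Inserting $\Gamma_\infty$ in place of the pure factorized family therefore produces, with the same $c_1,c_2$ as in Theorem~\ref{thm:convergence rate main theorem},
\[
\sup_{t\in[0,T]}\|\Gamma_N(t)-\Gamma_\infty(t)\|_{H^1_Z}\lesssim \|\Gamma_N(0)-\Gamma_\infty(0)\|_{H^1_{Z/2}}+\max(N^{5\beta/2-1},N^{-\beta}(\ln N)^7).
\]

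\textbf{Main obstacle.} I expect the only nontrivial technical point to be transferring the bi-scattering input from a single NLS trajectory $\phi(t)$ to the averaged reference $\int|S_t\phi\rangle\langle S_t\phi|^{\otimes k}\,d\mu_0$. Because the estimates applied on the GP side are multilinear in $S_t\phi$, one should integrate inside each multilinear bound and exploit the uniform $H^1$-ball control on $\supp(\mu_0)$ from Step~1, so that the relevant spacetime norms of $\Gamma_\infty$ are dominated by $d\mu_0$-averages of the individual scattering norms, each uniformly finite. Making this ``Minkowski-then-scattering'' step precise inside the Koch--Tataru $U$-$V$ framework underlying the bi-scattering theorem is the main verification one needs to perform; everything else in the proof of Theorem~\ref{thm:convergence rate main theorem} --- the board-game combinatorics, the hierarchy Duhamel iteration, and the BBGKY-to-GP error analysis --- is formally linear in the reference and therefore unaffected by the de Finetti averaging.
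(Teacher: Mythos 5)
Your overall instinct---that the corollary follows by inserting $d\mu_0$ integrals into the proof of Theorem \ref{thm:convergence rate main theorem} and exploiting that $\mu_0$ is supported in the $H^1$-ball of radius $E_0$---is exactly what the paper does (it dispatches the corollary in one sentence: ``adding the $d\mu_0$ integrals in suitable places''). However, the way you frame the reduction contains a genuine misstep. Theorem \ref{thm:convergence rate main theorem} is \emph{not} proved as a stability estimate of $\Gamma_N$ against an arbitrary solution of the GP hierarchy \eqref{hierarchy:GP}, and the rank-one structure of the reference \emph{is} used. The quantitative hierarchy comparison in \S\ref{sec:bbgky vs Hnls} is between BBGKY and the Hartree (H-NLS) hierarchy \eqref{hierarchy:Hartree}, which carries the same coupling operator $B_N^{(k+1)}$ built from $V_N$; this is precisely what makes the difference hierarchy have a single interaction term. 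The $N^{-\beta}(\ln N)^7$ factor then comes from the one-body bi-scattering comparison of \eqref{NLS:Hartree} with \eqref{NLS:opening cubic} (Theorem \ref{Thm:biScattering}) combined with the tensor-product Lemma \ref{Lemma:Binomial}, both of which use the factorized structure. If you literally take the GP-hierarchy solution $\Gamma_\infty$ as the reference and ``run the proof verbatim,'' the difference hierarchy has two inequivalent couplings, $\operatorname{Tr}_{k+1}[V_N(x_j-x_{k+1}),\gamma_N^{(k+1)}]$ versus $b_0\operatorname{Tr}_{k+1}[\delta(x_j-x_{k+1}),\gamma_\infty^{(k+1)}]$, so an extra forcing term involving $V_N-b_0\delta$ appears at every coupling level; at $H^1$ regularity the Klainerman--Machedon collapsing machinery closes this only at a rate of order $N^{-\beta/2}$ (the paper says this explicitly when explaining why the Hartree intermediate was chosen). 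Hence Step 2 as you state it does not deliver $\max(N^{\frac52\beta-1},N^{-\beta}(\ln N)^7)$.

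The repair---which is what ``suitable places'' means---is to keep the two-step structure: take as intermediate reference the averaged Hartree family $\int\left\vert \phi_N^{(\phi)}(t)\right\rangle\left\langle \phi_N^{(\phi)}(t)\right\vert^{\otimes k}d\mu_0(\phi)$, where $\phi_N^{(\phi)}$ solves \eqref{NLS:Hartree} with datum $\phi$. Since the hierarchy \eqref{hierarchy:Hartree} is linear and the needed energy bounds on the reference survive Minkowski's inequality in $d\mu_0$, Theorem \ref{Thm:BBGKY Estimates in Masternorm} applies unchanged and yields the $N^{\frac52\beta-1}$ contribution. Then bridge to $\Gamma_\infty(t)$ by applying Theorem \ref{Thm:biScattering} and Lemma \ref{Lemma:Binomial} \emph{per trajectory} $\phi\in\supp\mu_0$ (the constants are uniform because the footnote forces $\left\Vert \phi\right\Vert_{H^1}\leq E_0$ on the support) and integrating in $d\mu_0$ by Minkowski, which yields the $N^{-\beta}(\ln N)^7$ contribution. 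Your closing ``Minkowski-then-scattering'' paragraph is essentially this correct step, but it must be attached to the per-trajectory Hartree-vs-NLS comparison, not to a hierarchy-level estimate against a general GP solution.
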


For $\beta \in \left( 0,\frac{2}{5}\right) $, Theorem \ref{thm:convergence
rate main theorem} and Corollary \ref{cor:convergence rate main theorem}
give a convergence rate estimate.\footnote{%
The method does yield the optimal $N^{-1}$ rate when $\beta =0$ but one
needs to change (\ref{NLS:opening cubic}).} Also, to be precise, the
concluded rate is in the $H^{1}$ norm which is stronger than the usual trace
norm convergence. Of course, both are physically meaningful with one being
the kinetic energy and one being the probability when restricted to $\gamma
^{(1)}$. We remark that the rate $\frac{\left( \ln N\right) ^{7}}{N^{\beta }}%
,$ when $\beta \in \left( 0,\frac{2}{7}\right] ,$ is an almost optimal in $N$
rate\footnote{%
We emphasize the \textquotedblleft in $N$\textquotedblright\ aspect of the
optimality here because the best \textquotedblleft in $t$\textquotedblright\
growth rate is unknown. But a rate better than exponential growth has been
proven to be possible in related scenarios with the second-order correction
-- see \cite{Chen2ndOrder,JC1,GM1,GM2,GMM1,GMM2,Kuz,KP} for examples.} as
the optimal in $N$ rate is $\frac{1}{N^{\beta }}$ if we require both sides
of the estimates to be in $H^{1}$. With a more delicate argument, the power
of $\ln N$ can be reduced. We leave it at $7$ for simplicity as $\left( \ln
N\right) ^{7}$ is still better than $N^{\varepsilon }$. If we assume extra
regularity in the limiting initial datum $\phi _{0}$, then our rate improves
and in fact reveals more details of the story.

\begin{corollary}[Improved Rate with $H^{q}$, $q>1$ Datum]
\label{Cor:Optimal N-body rate}In addition to the assumptions (a)-(c),
assume that the limiting intial one-particle state $\phi _{0}\in H^{q}(%
\mathbb{R}^{3})$ for some $q>1$, then estimate (\ref{estimate:convergence
rate main estimate for >H1 with exp in time}) in Theorem \ref%
{thm:convergence rate main theorem} can be improved to 
\begin{eqnarray}
&&\sup_{t\in \left[ 0,T\right] }\left\Vert \Gamma _{N}(t)-\left\{ \left\vert
\phi (t)\right\rangle \left\langle \phi (t)\right\vert ^{\otimes k}\right\}
\right\Vert _{H_{Z}^{1}}
\label{estimate:convergence rate main estimate for >H1 with exp in time} \\
&\lesssim &_{q,\left\Vert \phi _{0}\right\Vert _{H^{q}}}\Vert \Gamma
_{N}(0)-\left\{ \left\vert \phi _{0}\right\rangle \left\langle \phi
_{0}\right\vert ^{\otimes k}\right\} \Vert _{H_{Z/2}^{1}}+\max (N^{\frac{5}{2%
}\beta -1},N^{-\min (q,2)\beta }(\ln N)^{7})  \notag
\end{eqnarray}
\end{corollary}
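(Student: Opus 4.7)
The improvement concerns only the factor $N^{-\beta}$ in the second summand of the $\max$; the combinatorial contribution $N^{\frac{5}{2}\beta-1}$ comes from an entirely different source in the proof of Theorem~\ref{thm:convergence rate main theorem} (the counting of repeated collisions under the hierarchy iteration) and cannot be improved by assuming extra regularity of $\phi_0$. My plan is therefore to revisit that proof and refine only the single step responsible for the $N^{-\beta}$ loss.

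In the hierarchy argument this loss enters when one compares the scaled BBGKY collision operator, whose kernel involves $V_N(x)\defeq N^{3\beta}V(N^{\beta}x)$, with its contact-interaction limit $b_0\delta(x)$, after everything has been reduced (by Duhamel expansion and the board-game collapse) to a pairing against a cubic expression in the limiting profile $\phi(t)$. The sharper input I would use is the elementary Fourier-side bound
\begin{equation*}
|\widehat{V_N}(\xi)-b_0|\;=\;|\hat V(\xi/N^{\beta})-\hat V(0)|\;\lesssim\;\min\!\bigl(1,\;|\xi|^{2}/N^{2\beta}\bigr),
\end{equation*}
which crucially uses the hypothesis that $V$ is even (so that $\nabla\hat V(0)=0$, killing the would-be linear Taylor term) and transparently explains why the gain is capped at $N^{-2\beta}$. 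Interpolation then gives, for every $0\le r\le 2$,
\begin{equation*}
\|(V_N-b_0\delta)\ast g\|_{L^{2}}\;\lesssim\;N^{-r\beta}\,\|g\|_{H^{r}}.
\end{equation*}

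Next I would propagate the extra regularity along the NLS flow. For $q>1$ persistence of $H^{q}$ regularity for the defocusing cubic NLS on $\mathbb{R}^{3}$ is standard, and in the present setting it is available directly from the bi-scattering theorem already used in the proof of Theorem~\ref{thm:convergence rate main theorem}, yielding $\sup_{t\in[0,T]}\|\phi(t)\|_{H^{q}}\le C_{q}(\|\phi_{0}\|_{H^{q}})$ on the entire time window in play. Substituting this together with the Fourier bound above, applied with $r=\min(q,2)$, into the single bottleneck term of Theorem~\ref{thm:convergence rate main theorem} replaces $N^{-\beta}$ by $N^{-\min(q,2)\beta}$ and leaves the $(\ln N)^{7}$ factor and the $N^{\frac{5}{2}\beta-1}$ contribution untouched; the new constant $C_{q}(\|\phi_{0}\|_{H^{q}})$ is absorbed into the implicit symbol $\lesssim_{q,\|\phi_{0}\|_{H^{q}}}$.

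The main obstacle I anticipate is purely bookkeeping: verifying that the extra $r$ derivatives really can be routed onto the factorized side $|\phi\rangle\langle\phi|^{\otimes k}$ (whose $H^{q}$ norm is finite uniformly in $t$) without creating a compensating loss on the BBGKY side (whose master norm $\|\cdot\|_{H^{1}_{Z}}$ only tracks one derivative per factor) and without damaging the geometric convergence of the $k$-sum. Concretely, this means reworking the Klainerman-Machedon trilinear estimate inside the board-game step with the derivatives redistributed as above and checking that the resulting weighted sum still converges for the same choice $Z=c_{2}E_{0}$ as in the main theorem. This step is in the same spirit as the derivative-sharing arguments of the hierarchy literature cited in the introduction, and is where I expect the actual work of the proof to lie; all other ingredients are either standard or already in place from Theorem~\ref{thm:convergence rate main theorem}.
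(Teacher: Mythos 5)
Your proposal correctly isolates which summand can improve and correctly identifies the Fourier-side mechanism (the vanishing $|\hat V(\xi N^{-\beta})-b_{0}|\lesssim \min(1,N^{-2\beta}|\xi|^{2})$ for even $V$, interpolation, the $\min(q,2)$ cap, and persistence of $H^{q}$ regularity); these are indeed the ingredients behind Lemma \ref{L:N-for-deriv} and its bilinear variant, and your evenness remark is in fact more explicit than the paper's appendix. But the bottleneck is mislocated, and this is a genuine gap. In the proof of Theorem \ref{thm:convergence rate main theorem} the comparison $V_{N}$ versus $b_{0}\delta$ never occurs inside the hierarchy iteration: the BBGKY hierarchy is differenced against the Hartree hierarchy (\ref{hierarchy:Hartree}), which carries the \emph{same} $V_{N}$, so the Duhamel expansion, KM board game, and collapsing estimates produce only the $N^{\frac{5}{2}\beta-1}$ term, use only the $H^{1}$ energy bounds on $\gamma_{N}^{(k)}$, and are completely unaffected by extra regularity of $\phi_{0}$. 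The $N^{-\beta}(\ln N)^{7}$ term comes entirely from the scalar comparison of $\phi_{N}$ (solving (\ref{NLS:Hartree})) with $\phi$, i.e.\ Theorem \ref{Thm:biScattering}(ii), transferred to tensor products by the elementary binomial Lemma \ref{Lemma:Binomial} and a geometric $k$-sum (convergent once $Z\gtrsim C_{1}^{2}$). Hence there is no ``single bottleneck term'' in the board-game-collapsed expansion pairing $W_{N}$ against a cubic expression in $\phi(t)$, and nothing in the KM trilinear estimate or in the choice of $Z$ needs reworking for the corollary: it is obtained by replacing Proposition \ref{P:diff-control} with its $H^{q}$ analogue, Proposition \ref{P:diff-control-ep}, in the second summand (\ref{estimate:key-2 for almost optimal}), while (\ref{estimate:key-1 for almost optimal}) is left untouched. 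If instead you mean to restructure the argument and difference BBGKY directly against the GP hierarchy so that $W_{N}$ hits the factorized limit, note that the coupling terms carry $\gamma_{N}^{(k+1)}$, for which only $H^{1}$-level bounds are available, so the extra regularity cannot be exploited there; the paper explicitly flags that this direct route creates additional error terms (of size about $N^{-\beta/2}$) and the trace-versus-power dilemma, which is precisely what the intermediate Hartree dynamics is introduced to avoid.

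The second, related gap is that the improvement is not a plug-in substitution of your convolution bound. The forcing term $Q_{N,I}(\phi)$ involving $\langle\nabla\rangle[(W_{N}\ast|\phi|^{2})\phi]$ is critical: it must be estimated in $U^{2}$--$V^{2}$ duality, splitting the gain as $s=q-\tfrac12$ on one convolution pairing and $s=\tfrac12$ on the other, rescuing the half-derivatives by the bilinear Strichartz estimate in the form (\ref{E:bil-UV02}) (with $V^{2}$ on the dual function), and paying the $(\ln N)^{7}$ through a logarithmic interpolation that is also what makes the Littlewood--Paley sums over $M_{1},\dots,M_{4}$ converge after $q$ derivatives are redistributed onto the $\phi$ factors. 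Your proposal asserts the $(\ln N)^{7}$ is ``untouched'' and defers the derivative bookkeeping to the hierarchy $k$-sum and the KM step, i.e.\ to the wrong sums; the summability that actually needs verification is the frequency summation in Proposition \ref{P:diff-control-ep}, which is where the real work of the corollary lies. As written, the plan does not yield (\ref{estimate:convergence rate main estimate for >H1 with exp in time}) without supplying this missing core.
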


Corollary \ref{Cor:Optimal N-body rate} proves that we get an optimal $H^{1}$
rate if we have $H^{1+\varepsilon }$ regularity as $\frac{\left( \ln
N\right) ^{7}}{N^{(1+\varepsilon )\beta }}$ is better than $\frac{1}{%
N^{\beta }}$. On the other hand, apparently, the optimal $H^{1}$ rate
improves if we have $H^{q},$ $q>1$.

We remark that it is not too difficult to use some extra Littlewood-Paley
argument to improve the $N^{\frac{5\beta }{2}-1}$ inside estimates (\ref%
{estimate:convergence rate main estimate for H1 with exp in time}) and (\ref%
{estimate:convergence rate main estimate for >H1 with exp in time}) to $N^{%
\frac{3\beta }{2}-1}$ which concludes convergence for $\beta \in \left( 0,%
\frac{2}{3}\right) $ and yields the optimal in $N$ rate for $\beta \in
\left( 0,\frac{2}{5}\right] $. We choose not to do so in this paper as we
would like to keep this paper short. The $\frac{\left( \ln N\right) ^{7}}{%
N^{\beta }}$ and $\frac{\left( \ln N\right) ^{7}}{N^{q\beta }}$ in estimates
(\ref{estimate:convergence rate main estimate for H1 with exp in time}) and (%
\ref{estimate:convergence rate main estimate for >H1 with exp in time}) come
from the following new NLS \textquotedblleft
bi-scattering\textquotedblright\ result. To state it, let $\phi _{N}$ solve
the Hartree type NLS (H-NLS) equation 
\begin{equation}
\begin{aligned} &i\partial _{t}\phi _{N} =-\Delta \phi _{N}+(V_{N}\ast |\phi
_{N}|^{2})\phi _{N}\text{ in }\mathbb{R}^{3+1} \\ &\phi _{N}(0,x) =\phi
_{0}(x) \end{aligned}  \label{NLS:Hartree}
\end{equation}%
where $V_{N}(x)=N^{3\beta }V(N^{\beta }x)$ as in (\ref{hamitonian:N-body})
with $V\geqslant 0$.

\begin{theorem}[bi-scattering]
\label{Thm:biScattering} Let $\phi$ solve \eqref{NLS:opening cubic} and $%
\phi_N$ solve \eqref{NLS:Hartree}. We then have the following.

(i) For $H^1$ data, both $\phi(t)$ and $\phi_N(t)$ satisfy the
global-in-time bounds 
\begin{equation*}
\Vert \langle \nabla \rangle \phi \Vert _{L_{t}^{2}(\mathbb{R}%
)L_{x}^{6}}\lesssim _{\left\Vert \phi _{0}\right\Vert _{H^{1}}}1\,,\qquad
\Vert \langle \nabla \rangle \phi _{N}\Vert _{L_{t}^{2}(\mathbb{R}%
)L_{x}^{6}}\lesssim _{\left\Vert \phi _{0}\right\Vert _{H^{1}}}1,
\end{equation*}
and scatter in $H^1$ -- that is, there exist forward-in-time states $\phi_+$
and $\phi_{N,+}$ such that 
\begin{equation*}
\lim_{t \to \infty} \| \phi(t)- e^{it\Delta} \phi_+ \|_{H_x^1} = 0 \,,
\qquad \lim_{t \to \infty} \| \phi_N(t)- e^{it\Delta} \phi_{N,+} \|_{H_x^1}
= 0
\end{equation*}

(ii) We have the global-in-time $H^{1}$ comparison estimate 
\begin{equation}
\left\Vert \phi -\phi _{N}\right\Vert _{L_{t}^{\infty }(\mathbb{R}%
)H_{x}^{1}}\lesssim _{q,\left\Vert \phi _{0}\right\Vert _{H^{q}}}\frac{%
\left( \ln N\right) ^{7}}{N^{q\beta }}  \label{nls difference:almost optimal}
\end{equation}%
provided that $\phi _{0}\in H^{q}$, $q\in \left[ 1,2\right] $, and the
optimal rate is $N^{-q\beta }$.
\end{theorem}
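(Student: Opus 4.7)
\medskip\noindent\textbf{Proof Plan.}

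\smallskip\noindent\emph{Part (i).}
For $\phi$, the $L^{2}_{t}L^{6}_{x}$-bound and $H^{1}$-scattering are the classical Ginibre--Velo result for the defocusing cubic NLS on $\mathbb{R}^{3}$. For $\phi_{N}$ the nonlinearity $(V_{N}\ast|\phi_{N}|^{2})\phi_{N}$ is still defocusing since $V\geq 0$, and the conserved energy $E[\phi_{N}]=\tfrac12\|\nabla\phi_{N}\|_{L^{2}}^{2}+\tfrac14\int(V_{N}\ast|\phi_{N}|^{2})|\phi_{N}|^{2}$ is uniformly bounded in $N$ because $\int(V_{N}\ast|\phi_{0}|^{2})|\phi_{0}|^{2}\leq\|V\|_{L^{1}}\|\phi_{0}\|_{L^{4}}^{4}\lesssim\|\phi_{0}\|_{H^{1}}^{4}$. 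I would first prove a uniform-in-$N$ interaction Morawetz estimate for \eqref{NLS:Hartree}, yielding $\|\phi_{N}\|_{L^{4}_{t,x}(\mathbb{R}\times\mathbb{R}^{3})}\lesssim_{\|\phi_{0}\|_{H^{1}}}1$, and then, via a Koch--Tataru $U^{p}$--$V^{p}$ bootstrap, upgrade this to the full bound on $\|\langle\nabla\rangle\phi_{N}\|_{L^{2}_{t}L^{6}_{x}}$ uniformly in $N$. Scattering in $H^{1}$ then follows by the standard $e^{it\Delta}$-asymptote argument.

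\smallskip\noindent\emph{Part (ii), source estimate.}
Set $w\defeq\phi-\phi_{N}$ and split
\[
b_{0}|\phi|^{2}\phi-(V_{N}\ast|\phi_{N}|^{2})\phi_{N}=F_{\text{mis}}+F_{\text{stab}},
\]
where $F_{\text{mis}}=\bigl((b_{0}-V_{N}\ast)|\phi|^{2}\bigr)\phi$ carries the mismatch between the $\delta$-interaction and the smeared one, and $F_{\text{stab}}=\bigl(V_{N}\ast(|\phi|^{2}-|\phi_{N}|^{2})\bigr)\phi+(V_{N}\ast|\phi_{N}|^{2})w$ is absorbable by Strichartz; then $w$ satisfies a perturbed defocusing cubic NLS. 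The core new estimate is that $\langle\nabla\rangle F_{\text{mis}}$ is $O(N^{-q\beta})$ in a suitable Strichartz-dual norm when $\phi_{0}\in H^{q}$, $q\in[1,2]$. I would obtain it by a Taylor expansion that uses the evenness of $V$:
\[
(b_{0}-V_{N}\ast)g(x)=\tfrac12\int V_{N}(y)\bigl[2g(x)-g(x-y)-g(x+y)\bigr]\,dy,
\]
whose integrand is pointwise controlled by $|y|^{q}\|g\|_{W^{q,p}}$ for $q\in[1,2]$; combined with $\int V_{N}(y)|y|^{q}\,dy=N^{-q\beta}\int V(z)|z|^{q}\,dz$ this produces the factor $N^{-q\beta}$. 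Fractional Leibniz handles $g=|\phi|^{2}$ with $\phi\in H^{q}$, feeding in the $L^{2}_{t}L^{6}_{x}$-bound from part (i). Optimality is checked by plugging in an explicit $\phi_{0}\in H^{q}\setminus H^{q+\varepsilon}$ and isolating the leading Taylor remainder.

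\smallskip\noindent\emph{Long-time stability; origin of $(\ln N)^{7}$.}
With $F_{\text{mis}}$ small, I would run a long-time perturbation argument: partition $\mathbb{R}$ into sub-intervals, their number controlled by $\|\phi_{0}\|_{H^{1}}$ alone, on each of which the $L^{2}_{t}L^{6}_{x}$-norm of $\phi$ is below the stability threshold. On each piece, $U^{p}$--$V^{p}$ estimates dominate $\|w\|_{L^{\infty}_{t}H^{1}}$ by the initial value of $w$ on that piece plus the source contribution. Running the iteration inside the Koch--Tataru atomic framework, rather than by a direct Strichartz--Gronwall step, converts the exponential-in-$\ln N$ blow-up of a naive iteration into a polynomial one, producing the $(\ln N)^{7}$ factor (the precise power is tracked but inessential). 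The hardest steps, which I expect to be the main obstacles, are (a) the uniform-in-$N$ interaction Morawetz/Strichartz bounds for $\phi_{N}$ in spite of the concentration scale $N^{-\beta}$ of $V_{N}$, and (b) executing the global stability with only polylogarithmic loss in $N$, which is precisely what forces the use of the $U^{p}$--$V^{p}$ machinery.
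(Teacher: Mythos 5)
Your overall skeleton matches the paper: part (i) is quoted/standard scattering theory plus a uniform-in-$N$ bound for the Hartree flow, and in part (ii) you isolate exactly the paper's forcing term $-(W_N\ast|\phi|^2)\phi$ with $W_N=V_N-b_0\delta$ and close the remaining terms by a small-interval bootstrap in $U^2$--$V^2$. The genuine gap is in your treatment of the mismatch term at $q=1$, which is the case the theorem is actually about. You propose to gain $N^{-q\beta}$ from the scalar convolution bound (second differences of $g=|\phi|^2$, fractional Leibniz, Strichartz from part (i)). Count derivatives: one must estimate $\langle\nabla\rangle\bigl[(W_N\ast|\phi|^2)\phi\bigr]$ in a dual Strichartz norm. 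When the derivative lands on the outer $\phi$ your scheme works (this is the paper's Term II). But when it lands on the convolution you face $\bigl[W_N\ast(\bar\phi\,\nabla\phi)\bigr]\phi$, and extracting even the $q=1$ gain $N^{-\beta}$ by the scalar estimate (the paper's Lemma \ref{L:N-for-deriv}, or your symmetrized Taylor version) would require one more full derivative on the product $\bar\phi\,\nabla\phi$, i.e.\ two derivatives distributed on factors that are only $H^1$; fractional Leibniz cannot supply this. The paper's way out is structural: it estimates this term through the duality pairing using the bilinear convolution estimate (Lemma \ref{L:N-for-deriv-bil}), which buys $N^{-\beta}$ at the cost of only $|\nabla|^{1/2}$ on \emph{each} side of the pairing, and then rescues the two half-derivatives by bilinear Strichartz (Lemma \ref{L:bil-Str}) in Littlewood--Paley pieces, with the $g$ factor handled by the $U^2\!\to\!V^2$ upgraded bilinear estimate \eqref{E:bil-UV02}. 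Your plan never identifies this bilinear mechanism, and without it the $q=1$ estimate does not close. Relatedly, your explanation of the $(\ln N)^7$ is not the actual source: the number of time intervals is finite and independent of $N$, so the iteration loses only an $N$-independent constant; in the paper the logarithms come from (a) interpolating the bilinear gain with the trivial bound to insert $(\log\langle\nabla\rangle)$ weights that make the frequency sums converge, and (b) the logarithmic loss inherent in replacing a $U^2$ norm by $V^2$ in the bilinear estimate, forced because the dual test function $g$ lives only in $V^2$.

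Two smaller points. First, your symmetrized (even-$V$) second-difference expansion is a sensible way to push the convolution gain to $s\in(1,2]$, which is in the spirit of what Proposition \ref{P:diff-control-ep} needs for $q$ close to $2$ beyond the stated range $0\le s\le1$ of Lemma \ref{L:N-for-deriv}; but even for $q>1$ the paper still must split the gain as $s=q-\tfrac12$ on one pairing and $s=\tfrac12$ on the other and run the same bilinear machinery, so the extra regularity does not let you bypass it. Second, for optimality, "isolating the leading Taylor remainder" of the forcing is not enough: one must also show the Duhamel time integral does not cancel it, which is why the paper builds a frequency-localized datum producing a non-oscillatory (space-time resonant) phase in \S\ref{sec:optimality}; a lower bound on the source alone does not give a lower bound on $\|\phi-\phi_N\|_{L_t^\infty H_x^1}$.
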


We say Theorem \ref{Thm:biScattering} is a \textquotedblleft
bi-scattering\textquotedblright\ result not because, as stated in (i), both (%
\ref{NLS:opening cubic}) and (\ref{NLS:Hartree}) scatter, which is in fact
known, but because the conclusion in (ii) that the interaction potential $%
V_{N}\rightarrow \delta $ as $N\rightarrow \infty $ for two corresponding
Hamiltonian evolution is called a $2$-body scattering process in the context
of quantum many-body dynamics. Moreover, estimate (\ref{nls
difference:almost optimal}) holds globally and thus carries some $%
t\rightarrow \infty $ information. That is, one scattering is the usual $%
t\rightarrow \infty $ scattering while another scattering is the $%
N\rightarrow \infty $ scattering, and they happen simultaneously as in (\ref%
{nls difference:almost optimal}).

On the one hand, we prove Theorem \ref{Thm:biScattering} which is usually an
ingredient in a Fock space approach paper. On the other hand, our proof does
not have a compactness or uniqueness argument as in the standard hierarchy
approach. One could, in fact, view the main proof of this paper as
integrating the idea from Fock space approach that, using (\ref{NLS:Hartree}%
) as an intermediate dynamic, into the hierarchy method in the
Klainerman-Machedon theory format. The fact that we close the regularity gap
and prove the (almost) optimal rates with such a simple combination is
exactly the main novelty of this paper. Let us now give a brief review of
the two approaches.

Limit (\ref{limit:mean field limit}) was first established in the work of Erd%
\"{o}s, Schlein, and Yau \cite{E-S-Y2,E-S-Y5,E-S-Y3} for the $\mathbb{R}^{3}$
defocusing cubic case around 2005.\footnote{%
See also \cite{AGT} for the 1D defocusing cubic case around the same time.}
They first proved (\ref{H1 condition:Energy}) implies (\ref{H1 condition:H1}%
) as a preparation. They then proved that $\left\{ \Gamma _{N}(t)\right\} $
is a compact sequence with respect to a suitable weak* topology on trace
class operators using the fact $\Gamma _{N}(t)$ satisfies the
Bogoliubov--Born--Green--Kirkwood--Yvon (BBGKY) hierarchy 
\begin{eqnarray}
i\partial _{t}\gamma _{N}^{(k)} &=&\sum_{j=1}^{k}\left[ -\Delta
_{x_{j}},\gamma _{N}^{(k)}\right] +\frac{1}{N}\sum_{1\leqslant i<j\leqslant
k}\left[ V_{N}(x_{i}-x_{j}),\gamma _{N}^{(k)}\right]  \label{hierarchy:BBGKY}
\\
&&+\frac{N-k}{N}\sum_{j=1}^{k}\limfunc{Tr}\nolimits_{k+1}\left[
V_{N}(x_{j}-x_{k+1}),\gamma _{N}^{(k+1)}\right] ,  \notag
\end{eqnarray}%
and that every limit point $\Gamma _{\infty }(t)=\left\{ \gamma _{\infty
}^{(k)}\right\} $ of $\left\{ \Gamma _{N}(t)\right\} $ satisfies the
Gross-Pitaevskii (GP) hierarchy%
\begin{equation}
i\partial _{t}\gamma _{\infty }^{(k)}=\sum_{j=1}^{k}\left[ -\Delta
_{x_{k}},\gamma _{\infty }^{(k)}\right] +b_{0}\sum_{j=1}^{k}\limfunc{Tr}%
\nolimits_{k+1}\left[ \delta (x_{j}-x_{k+1}),\gamma _{\infty }^{(k+1)}\right]
.  \label{hierarchy:GP}
\end{equation}%
Finally, they proved delicatedly that there is a unique solution to the $%
\mathbb{R}^{3}$ cubic GP hierarchy in a $H^{1}$-type space (unconditional
uniqueness) in \cite{E-S-Y2} with a sophisticated Feynman graph analysis and
many highly technical singular integral techniques. Because the desired
limit $\{\left\vert \phi (t)\right\rangle \left\langle \phi (t)\right\vert
^{\otimes k}\}$ solves hierarchy (\ref{hierarchy:GP}), limit (\ref%
{limit:mean field limit}) is then proved without any rate estimate. This
first series of ground breaking papers have motivated a large amount of
work. Moreover, in \cite{E-S-Y5,E-S-Y3}, the weak* convergence was upgraded
to strong via an elementary functional analysis theorem. This
\textquotedblleft small\textquotedblright\ weak* to strong upgrade firmly
hinted that a convergence rate result is possible.

In 2007, Klainerman and Machedon \cite{KM}, inspired by \cite%
{E-S-Y2,KMNullForm}, proved the uniqueness of solutions regarding (\ref%
{hierarchy:GP}) in a Strichartz-type space (conditional uniqueness). They
proved a collapsing type estimate, to estimate the inhomogeneous term in (%
\ref{hierarchy:GP}), and provided a different combinatorial argument, the
now so-called Klainerman-Machedon (KM) board game, to combine the
inhomogeneous terms effectively reducing their numbers. At that time, it was
unknown how to prove that the limits coming from (\ref{hierarchy:BBGKY}) are
in the Strichartz type spaces even though the target limit $\{\left\vert
\phi (t)\right\rangle \left\langle \phi (t)\right\vert ^{\otimes k}\}$
generated by (\ref{NLS:opening cubic}) naturally lie in both the $H^{1}$%
-type space and the Strichartz type space. Nonetheless, \cite{KM} has made
the analysis of (\ref{hierarchy:GP}) approachable to PDE analysts and the KM
board game has been used in every work involving hierarchy (\ref%
{hierarchy:GP}).

When Kirkpatrick, Schlein, and Staffilani \cite{KSS} found that the KM
Strichartz-type bound can be obtained via a simple trace theorem for the
defocusing case in $\mathbb{R}^{2}$ and $\mathbb{T}^{2}$ in 2008, many works 
\cite{TCNP2,ChenAnisotropic,C-HFocusing,C-HFocusingII,GSS,HS1,SH1,S,Xie}
then followed such a scheme for the uniqueness of GP hierarchies. However,
how to check the KM bound in the 3D cubic case remained fully open at that
time.

T. Chen and Pavlovi\'{c} studied the 1D and 2D defocusing quintic case and
laid the foundation for the 3D quintic defocusing energy-critical case in
their late 2008 work \cite{TCNP2}, in which they proved that the 2D quintic
case, a case usually considered equivalent to the 3D cubic case, does
satisfy the KM bound though proving it for the 3D cubic case was still open.

In \cite{TCNP1,TCNP3,TCNP4}, T. Chen and Pavlovi\'{c} generalized the
problem and launched the well-posedness theory of (\ref{hierarchy:GP}) with
general initial datum as an independent subject away from (\ref{eqn:N-body}%
). (See also \cite{TCNPNT,MNPS,MNPRS1,MNPRS2,S,SS}.) Then in 2011, T. Chen
and Pavlovic proved the 3D cubic KM Strichartz type bound for the defocusing 
$\beta <1/4$ case in \cite{TCNP5}. (See also \cite{TCKT}.) The result was
quickly improved to $\beta \leqslant 2/7$ by X.C. in \cite{Chen3DDerivation}
and to the almost optimal case, $\beta <1,$ by X.C. and J.H. in \cite%
{C-H2/3,C-H<1}, by lifting the $X_{1,b}$ space techniques from NLS theory
into the field. Away from being the 1st work to prove the KM bound, the work 
\cite{TCNP5}, in fact, hinted\footnote{%
Private communication in 2011.} two unforeseen research directions of the
hierarchy method today.

One direction is to prove new NLS results via the more general but at the
same time more complicated hierarchy (\ref{hierarchy:GP}). The hierarchy
uniqueness theorems started to match the corresponding NLS results in \cite%
{S1,HTX,HTX1,C-HFocusingII,C-PUniqueness} following the 2013 introduction of
the quantum de Finetti theorem from \cite{LMR} to the field by T. Chen,
Hainzl, Pavlovi\'{c}, and Seiringer \cite{CHPS}. Then, recently, the
previously open $\mathbb{T}^{d}$ NLS unconditional uniqueness problems
either saw substantial progress or were solved via the analysis of the
supposedly more complicated GP hierarchy. In \cite{HS2}, Herr and Sohinger
generalized the Sobolev multilinear estimates in \cite{CHPS} and obtained
new unconditional uniqueness results regarding $\mathbb{T}^{d}$ GP hierarchy
and hence $\mathbb{T}^{d}$ NLS. In \cite{CJInvent}, by discovering the
hierarchical uniform frequency localization (HUFL) property, X.C. and J.H.
established, for the $\mathbb{T}^{3}$ quintic energy-critical GP hierarchy,
a $H^{1}$-type uniqueness theorem which was neither conditional nor
unconditional but implies the $H^{1}$ unconditional uniqueness for the $%
\mathbb{T}^{3}$ quintic energy-critical NLS. More recently, in \cite{CJT4},
X.C. and J.H. worked out an extended KM board game from scratch to enable,
finally, the application of dispersive norms like $U$-$V$ and $X_{s,b}$ in
the field and proved the $H^{1}$ unconditional uniqueness for the $\mathbb{T}%
^{4}$ cubic energy-critical NLS, an unanticipated \textquotedblleft
special\textquotedblright\ case in the $\mathbb{R}^{3}/\mathbb{R}^{4}/%
\mathbb{T}^{3}/\mathbb{T}^{4}$ energy-critical sequence, with the hierarchy
approach. The proof in \cite{CJT4} went so smoothly that, X.Chen, Shen, and
Zhang completely and unifiedly solved the unconditional uniqueness for $%
\mathbb{R}^{d}$ and $\mathbb{T}^{d}$ cubic and quintic energy-supercritical
NLS in \cite{SH2}.

The other direction hinted in \cite{TCNP5} is that it is possible to use the
KM theory to construct a hierarchy method without the compactness argument,
that is, it is possible to establish convergence rate estimates using the
hierarchy approach with $H^{1}$ regularity. In other words, \cite%
{TCNP5,Chen3DDerivation,C-H2/3,C-H<1}, can be considered as premodels of
this paper. However, completing the proof directly using solely the
hierarchy approach in KM format will need to pass, on the road, some extra
technical difficulties, like extra error terms, which charge a $N^{-\frac{%
\beta }{2}}$ price. On the other hand, in the Fock space approach, equation (%
\ref{NLS:Hartree}) naturally pops out 1st in the 2nd quantization argument
and one always needs to compare between equations (\ref{NLS:opening cubic})
and (\ref{NLS:Hartree}) to close limit (\ref{limit:mean field limit}), see,
for example \cite{BOB,BCS,BS,JC1,GM1,GM2,Kuz,NS}.\footnote{%
This is certainly only a fraction of all possible references as the Fock
space approach is also such a vast and sophisticated subject now. Please
also see the references within them and the newer ones online.} We
assimilate this idea into our proof.

\subsection{Outline of the Proof}

As the proof of Corollary \ref{cor:convergence rate main theorem} only
requires adding the $d\mu _{0}$ integrals in suitable places, we prove only
Theorem \ref{thm:convergence rate main theorem} and Corollary \ref%
{Cor:Optimal N-body rate} in detail. In fact, we stated Corollary \ref%
{cor:convergence rate main theorem} not to show the generality, but to
clarify a logical question that if the proof for Theorem \ref%
{thm:convergence rate main theorem} relies on any uniqueness theorems
regarding (\ref{hierarchy:GP}). For Theorem \ref{thm:convergence rate main
theorem} and Corollary \ref{Cor:Optimal N-body rate}, we indeed did not use
any uniqueness results regarding (\ref{hierarchy:GP}) as the desired limit
could be guessed in multiple ways. (The 2nd quantization argument is
certainly an option.) But for the general datum case, Corollary \ref%
{cor:convergence rate main theorem}, we are not aware of any method to guess
the desired limit away from using uniqueness results regarding (\ref%
{hierarchy:GP}) in \cite{CHPS}.

As mentioned before, the main proof here can be understood as utilizing the
hierarchy approach in KM format but put (\ref{NLS:Hartree}) as an
intermediate dynamic. That is, we prove estimates (\ref{estimate:convergence
rate main estimate for H1 with exp in time}) and (\ref{estimate:convergence
rate main estimate for >H1 with exp in time}) by summing two estimates. The
1st one is%
\begin{eqnarray}
&&\sup_{t\in \left[ 0,T_{0}\right] }\left\Vert \Gamma _{N}(t)-\left\{
\left\vert \phi _{N}(t)\right\rangle \left\langle \phi _{N}(t)\right\vert
^{\otimes k}\right\} \right\Vert _{H_{Z_{1}}^{1}}
\label{estimate:key-1 for almost optimal} \\
&\lesssim &_{E_{0}}\left\Vert \Gamma _{N}(0)-\left\{ \left\vert \phi
_{0}\right\rangle \left\langle \phi _{0}\right\vert ^{\otimes k}\right\}
\right\Vert _{H_{Z_{1}/2}^{1}}+\frac{1}{N^{1-\frac{5\beta }{2}}},  \notag
\end{eqnarray}%
while the 2nd one is 
\begin{equation}
\sup_{t\in \mathbb{R}}\left\Vert \left\{ \left\vert \phi
_{N}(t)\right\rangle \left\langle \phi _{N}(t)\right\vert ^{\otimes
k}\right\} -\left\{ \left\vert \phi (t)\right\rangle \left\langle \phi
(t)\right\vert ^{\otimes k}\right\} \right\Vert _{H_{Z_{2}}^{1}}\lesssim
_{q,\left\Vert \phi _{0}\right\Vert _{H^{q}}}\frac{\left( \ln N\right) ^{7}}{%
N^{q\beta }}.  \label{estimate:key-2 for almost optimal}
\end{equation}%
Estimates (\ref{estimate:convergence rate main estimate for H1 with exp in
time}) and (\ref{estimate:convergence rate main estimate for >H1 with exp in
time}) then follow by selecting $Z=\max (Z_{1},Z_{2})$.

We prove estimate (\ref{estimate:key-1 for almost optimal}) in \S \ref%
{sec:bbgky vs Hnls} by directly taking the difference between (\ref%
{hierarchy:BBGKY}) and the \textquotedblleft H-NLS\textquotedblright\
hierarchy (\ref{hierarchy:Hartree}) generated by (\ref{NLS:Hartree}). We
iterate the difference hierarchy (\ref{hierarchy:Hartree difference}) by
coupling into the next level multiple times and group the terms into the
free part, driving part, and interaction part. We can then proceed to
estimate following the scheme in \cite{Chen3DDerivation,C-H2/3,C-H<1}. This
part is new but the method is not. We comment that another option would be
taking the direct difference between (\ref{hierarchy:BBGKY}) and (\ref%
{hierarchy:GP}). The problem is that such a route would have produced a
difference hierarchy with two options to couple to the next level, namely $%
\limfunc{Tr}\nolimits_{k+1}\left[ V_{N}(x_{j}-x_{k+1}),\gamma _{N}^{(k+1)}%
\right] $ and $\limfunc{Tr}\nolimits_{k+1}\left[ \delta
(x_{j}-x_{k+1}),\gamma _{\infty }^{(k+1)}\right] $, compared to (\ref%
{hierarchy:Hartree}) in which there is only one interaction term. While
iterating the hierarchy is basically the only way to obtain hierarchy
estimates since the beginning of the hierarchy approach, it is evident that
the \textquotedblleft more direct\textquotedblright\ route indeed has way
more error terms. Finally, one also needs to face the classical
\textquotedblleft trace vs power\textquotedblright\ technical dilemma
without the intermediate dynamic. This is the technical reason that we chose
to use (\ref{NLS:Hartree}) as an intermediate dynamic. But indeed, one can
still get a $N^{-\frac{\beta }{2}}$ rate with KM theory alone.

We then prove estimate (\ref{estimate:key-2 for almost optimal}) by proving
Theorem \ref{Thm:biScattering} in \S \ref{sec:Hnls vs NLS}. Though (\ref%
{NLS:opening cubic}) is $H^{\frac{1}{2}}$ critical instead of $H^{1}$
critical, the error estimate (\ref{nls difference:almost optimal}) yielding $%
N^{-q\beta }$ is critical in the sense that all spatial derivatives and
space-time H\"{o}lder norms are fully absorbed in the estimates. That is,
one has no choice but to use the $U$-$V$ spaces for the $N\rightarrow \infty 
$ scattering proof. It then forces the $t\rightarrow \infty $ argument to be
in the $U$-$V$ spaces as well. Theorem \ref{Thm:biScattering} is the 1st
bi-scattering theorem of its type and it is obvious by this paper that it
has direct applications. Another highlight of the proof is that it employs
the full strength of the $\mathbb{R}^{d}$ Schr\"{o}dinger bilinear estimate.
Finally, we provide the first proof of the optimality of the $N^{-\beta }$
rate, which has been mentioned multiple times with physical insight in the
literature, via the method of space-time resonance in \S \ref{sec:optimality}%
.

Putting \S \ref{sec:bbgky vs Hnls} and \S \ref{sec:Hnls vs NLS} together
concludes the proof of the main theorems, Theorem \ref{thm:convergence rate
main theorem} and Corollary \ref{Cor:Optimal N-body rate}. It is surprising
that under the simple scheme in this paper, without too much extra work, the
hierarchy approach yields convergence rate estimate which was obtainable, so
far, only via the Fock space approach. Moreover, it eliminates the $H^{1}$
vs $H^{4}$ regularity gap by requiring $H^{1}$ or $H^{1+\varepsilon }$
regularity. It is also astonishing that the almost optimal or optimal in $N$
convergence rate can be obtained with this easy method.\footnote{%
Obtaining the optimal $N^{-\beta }$ rate using the Fock space approach
assuming $H^{4}$ has been done in the much harder $\beta =1$ case. See \cite%
{BS}.} The discovery of this simple hierarchy approach is the main novelity
of this paper.

With the help of the extended KM board game in \cite{CJT4} which allows the
application of the $U$-$V$ space-time estimates, and a $X^{\frac{1}{2}+}$
frequency localized version of the KM estimates, we expect improving (\ref%
{estimate:key-1 for almost optimal}) up to $\beta =1$ once we put in the
correlation structures we had in \cite{C-H<1}. (Of course, (\ref{NLS:Hartree}%
) has to be changed as well.) We shall do so in the next (longer) paper. The
main argument of this paper can also be extended to any finite time but with
a $1/\ln N$ rate. (See \cite{CSWZ}.)

\section{Comparing the BBGKY hierarchy and the H-NLS\label{sec:bbgky vs Hnls}%
}

The main goal in this section is to prove (\ref{estimate:key-1 for almost
optimal}) which will result from Theorem \ref{Thm:BBGKY Estimates in
Masternorm}. We adopt the shorthands 
\begin{eqnarray*}
U^{(k)}(t) &=&e^{it\triangle _{\mathbf{x}_{k}}}e^{-it\triangle _{\mathbf{x}%
_{k}^{\prime }}}, \\
V_{N}^{(k)}\gamma _{N}^{(k)} &=&\frac{1}{N}\sum_{1\leqslant i<j\leqslant k}%
\left[ V_{N}(x_{i}-x_{j}),\gamma _{N}^{(k)}\right] , \\
B_{N}^{(k+1)}\gamma _{N}^{(k+1)} &=&\sum_{j=1}^{k}B_{N,j,k+1}\gamma
_{N}^{(k+1)}=\sum_{j=1}^{k}\limfunc{Tr}\nolimits_{k+1}[V_{N}(x_{j}-x_{k+1}),%
\gamma _{N}^{(k+1)}].
\end{eqnarray*}%
and assume $\int V=1$ for convenience. We start by rewriting the 3D cubic
BBGKY hierarchy (\ref{hierarchy:BBGKY}) in integral form 
\begin{eqnarray}
\gamma _{N}^{(k)}(t_{k}) &=&U^{(k)}(t_{k})\gamma
_{N,0}^{(k)}+\int_{0}^{t_{k}}U^{(k)}(t_{k}-t_{k+1})V_{N}^{(k)}\gamma
_{N}^{(k)}(t_{k+1})dt_{k+1}  \label{equation:short duhamel of BBGKY} \\
&&+\frac{N-k}{N}\int_{0}^{t_{k}}U^{(k)}(t_{k}-t_{k+1})B_{N}^{(k+1)}\gamma
_{N}^{(k+1)}(t_{k+1})dt_{k+1}.  \notag
\end{eqnarray}%
where we have omitted the $(-i)$ in front of the 2nd and the 3rd term in the
right side of (\ref{equation:short duhamel of BBGKY}) as usual as we are
going to put everything in absolute values. In addition to (\ref%
{equation:short duhamel of BBGKY}), for $k=1,...,N,...,$ we consider the
H-NLS hierarchy%
\begin{equation}
\gamma _{H}^{(k)}(t_{k})=U^{(k)}(t_{k})\gamma
_{0}^{(k)}+\int_{0}^{t_{k}}U^{(k)}(t_{k}-t_{k+1})B_{N}^{(k+1)}\gamma
_{H}^{(k+1)}(t_{k+1})dt_{k+1}\text{, }  \label{hierarchy:Hartree}
\end{equation}%
generated by $\left\{ \gamma _{H}^{(k)}\left( t_{k},\mathbf{x}_{k},\mathbf{x}%
_{k}^{\prime }\right) =\left\vert \phi _{N}\right\rangle \left\langle \phi
_{N}\right\vert ^{\otimes k}\right\} $, the tensor products of solutions to (%
\ref{NLS:Hartree}).

The main concern in this section is the difference $\omega
_{N,H}^{(k)}=\gamma _{N}^{(k)}-\gamma _{H}^{(k)}$ which solves the hierarchy%
\begin{eqnarray}
\omega _{N,H}^{(k)}(t_{k}) &=&U^{(k)}(t_{k})\omega
_{N,0}^{(k)}+\int_{0}^{t_{k}}U^{(k)}(t_{k}-t_{k+1})V_{N}^{(k)}\gamma
_{N}^{(k)}(t_{k+1})dt_{k+1}  \label{hierarchy:Hartree difference with error}
\\
&&-\frac{k}{N}\int_{0}^{t_{k}}U^{(k)}(t_{k}-t_{k+1})B_{N}^{(k+1)}\gamma
_{N}^{(k+1)}(t_{k+1})dt_{k+1}  \notag \\
&&+\int_{0}^{t_{k}}U^{(k)}(t_{k}-t_{k+1})B_{N}^{(k+1)}\omega
_{N,H}^{(k+1)}(t_{k+1})dt_{k+1}  \notag
\end{eqnarray}%
Of course, we are using the convention that $\gamma _{N}^{(k)}=0$ if $k>N$
in (\ref{hierarchy:Hartree difference with error}).

As the error term 
\begin{equation*}
\frac{k}{N}\int_{0}^{t_{k}}U^{(k)}(t_{k}-t_{k+1})B_{N}^{(k+1)}\gamma
_{N}^{(k+1)}(t_{k+1})dt_{k+1}
\end{equation*}%
in (\ref{hierarchy:Hartree difference with error}) can be handled by adding
an extra $\frac{\ln N}{N}$ to our estimates of the main terms, we can assume
it drops out\footnote{%
Intereseted readers can see \cite{CSWZ} for a detailed handling of this
error term.} and rewrite (\ref{hierarchy:Hartree difference with error}) as 
\begin{eqnarray}
\omega _{N,H}^{(k)}(t_{k}) &=&U^{(k)}(t_{k})\omega
_{N,0}^{(k)}+\int_{0}^{t_{k}}U^{(k)}(t_{k}-t_{k+1})V_{N}^{(k)}\gamma
_{N}^{(k)}(t_{k+1})dt_{k+1}  \label{hierarchy:Hartree difference} \\
&&+\int_{0}^{t_{k}}U^{(k)}(t_{k}-t_{k+1})B_{N}^{(k+1)}\omega
_{N,H}^{(k+1)}(t_{k+1})dt_{k+1}.  \notag
\end{eqnarray}%
Iterating hierarchy (\ref{hierarchy:Hartree difference}) $\ell _{c}$ times,
we have 
\begin{eqnarray}
&&\omega _{N,H}^{(k)}(t_{k})  \notag \\
&=&U^{(k)}(t_{k})\omega
_{N,0}^{(k)}+%
\int_{0}^{t_{k}}U^{(k)}(t_{k}-t_{k+1})B_{N}^{(k+1)}U^{(k+1)}(t_{k+1})\omega
_{N,0}^{(k+1)}dt_{k+1}  \notag \\
&&+\int_{0}^{t_{k}}U^{(k)}(t_{k}-t_{k+1})V_{N}^{(k)}\gamma
_{N}^{(k)}(t_{k+1})dt_{k+1}  \notag \\
&&+\int_{0}^{t_{k}}U^{(k)}(t_{k}-t_{k+1})B_{N}^{(k+1)}  \notag \\
&&\times \int_{0}^{t_{k+1}}U^{(k+1)}(t_{k+1}-t_{k+2})V_{N}^{(k+1)}\gamma
_{N}^{(k+1)}(t_{k+2})dt_{k+2}dt_{k+1}  \notag \\
&&+\int_{0}^{t_{k}}U^{(k)}(t_{k}-t_{k+1})B_{N}^{(k+1)}  \notag \\
&&\times \int_{0}^{t_{k+1}}U^{(k+1)}(t_{k+1}-t_{k+2})B_{N}^{(k+2)}\omega
_{N,H}^{(k+2)}(t_{k+2})dt_{k+2}dt_{k+1}  \notag \\
&=&...  \notag \\
&\equiv &\text{\textsc{FP}}^{(k,\ell _{c})}(t_{k})+\text{\textsc{DP}}%
^{(k,\ell _{c})}(t_{k})+\text{\textsc{IP}}^{(k,\ell _{c})}(t_{k}).
\label{E:decomp-01}
\end{eqnarray}%
where we have grouped the terms in $\omega _{N,H}^{(k)}(t_{k})$ into three
parts.

To write out the three parts of $\omega _{N,H}^{(k)}$, we define, the
notation that, for $j\geqslant 1$, 
\begin{eqnarray*}
&&J_{N}^{(k,j)}(\underline{t}_{(k,j)})(f^{(k+j)}) \\
&=&\left( U^{(k)}(t_{k}-t_{k+1})B_{N}^{(k+1)}\right) \cdots \left(
U^{(k+j-1)}(t_{k+j-1}-t_{k+j})B_{N}^{(k+j)}\right) f^{(k+j)},
\end{eqnarray*}%
and $J_{N}^{(k,0)}(\underline{t}_{k})(f^{(k)})=f^{(k)}(t_{k})$, where $%
\underline{t}_{(k,j)}$ means $\left( t_{k+1},\ldots ,t_{k+j}\right) $ for $%
j\geqslant 1$ and $t_{k}$ for $j=0$. In this notation, the \emph{free part}
of $\omega _{N,H}^{(k)}$ at coupling level $\ell _{c}$ is given by 
\begin{eqnarray*}
\QTR{sc}{FP}^{(k,\ell _{c})} &=&U^{(k)}(t_{k})\omega _{N,0}^{(k)}+ \\
&&\sum_{j=1}^{\ell _{c}}\int_{0}^{t_{k}}\cdots
\int_{0}^{t_{k+j-1}}U^{(k)}(t_{k}-t_{k+1})B_{N}^{(k+1)}\cdots \\
&&\times U^{(k+j-1)}(t_{k+j-1}-t_{k+j})B_{N}^{(k+j)}\left(
U^{(k+j)}(t_{k+j})\omega _{N,0}^{(k+j)}\right) d\underline{t}_{(k,j)} \\
&=&\sum_{j=0}^{\ell _{c}}\int_{0}^{t_{k}}\cdots
\int_{0}^{t_{k+j-1}}J_{N}^{(k,j)}(\underline{t}%
_{(k,j)})(f_{FP}^{(k,j)})(t_{k+j})d\underline{t}_{(k,j)}
\end{eqnarray*}%
where in the $j=0$ case, it is meant that there are no time integrals and $%
J^{(k,0)}$ is the identity operator, and 
\begin{equation*}
f_{FP}^{(k,j)}(t_{k+j})=U^{(k+j)}(t_{k+j})\omega _{N,0}^{(k+j)}\text{;}
\end{equation*}%
the \emph{driving part}, which is a forcing term for $\omega _{N,H}^{(k)}$
but a potential term for $\gamma _{N}^{(k)}$, is given by 
\begin{eqnarray*}
&&\QTR{sc}{DP}^{(k,\ell _{c})} \\
&=&\int_{0}^{t_{k}}U^{(k)}(t_{k}-t_{k+1})V_{N}^{(k)}\gamma
_{N}^{(k)}(t_{k+1})dt_{k+1}+ \\
&&\sum_{j=1}^{\ell _{c}}\int_{0}^{t_{k}}\cdots
\int_{0}^{t_{k+j-1}}U^{(k)}(t_{k}-t_{k+1})B_{N}^{(k+1)}\cdots
U^{(k+j-1)}(t_{k+j-1}-t_{k+j}) \\
&&\times
B_{N}^{(k+j)}(\int_{0}^{t_{k+j}}U^{(k+j)}(t_{k+j}-t_{k+j+1})V_{N}^{(k+j)}%
\gamma _{N}^{(k+j)}(t_{k+j+1})dt_{k+j+1})d\underline{t}_{(k,j)} \\
&=&\sum_{j=0}^{\ell _{c}}\int_{0}^{t_{k}}\cdots
\int_{0}^{t_{k+j-1}}J_{N}^{(k,j)}(\underline{t}%
_{(k,j)})(f_{DP}^{(k,j)})(t_{k+j})d\underline{t}_{(k,j)},
\end{eqnarray*}%
where in the $j=0$ case, it is meant that there are no time integrals and $%
J^{(k,0)}$ is the identity operator, and 
\begin{equation}
f_{DP}^{(k,j)}(t_{k+j})=%
\int_{0}^{t_{k+j}}U^{(k+j)}(t_{k+j}-t_{k+j+1})V_{N}^{(k+j)}\gamma
_{N}^{(k+j)}(t_{k+j+1})dt_{k+j+1};  \label{eqn:def of f inside PP}
\end{equation}%
and the \emph{interaction part} is given by 
\begin{eqnarray*}
\QTR{sc}{IP}^{(k,\ell _{c})} &=&\int_{0}^{t_{k}}\cdots \int_{0}^{t_{k+\ell
_{c}}}dt_{k+1}\cdots dt_{k+\ell _{c}+1}\text{ }%
U^{(k)}(t_{k}-t_{k+1})B_{N}^{(k+1)}\cdots \\
&&\qquad \cdots U^{(k+\ell _{c})}(t_{k+\ell _{c}}-t_{k+\ell
_{c}+1})B_{N}^{(k+\ell _{c}+1)}\left( \omega _{N,H}^{(k+\ell
_{c}+1)}(t_{k+\ell _{c}+1})\right) \\
&=&\int_{0}^{t_{k}}\cdots \int_{0}^{t_{k+\ell _{c}}}J_{N}^{(k,\ell _{c}+1)}(%
\underline{t}_{(k,\ell _{c}+1)})\left( \omega _{N,H}^{(k+\ell
_{c}+1)}(t_{k+\ell _{c}+1})\right) d\underline{t}_{(k,\ell _{c}+1)}.
\end{eqnarray*}%
Notice that, on the one hand, the $\QTR{sc}{FP}^{(k,\ell _{c})}$ and $%
\QTR{sc}{DP}^{(k,\ell _{c})}$ are sums while $\QTR{sc}{IP}^{(k,\ell _{c})}$
is a single term; on the other hand, $\QTR{sc}{DP}^{(k,\ell _{c})}$ depends
solely on $\gamma _{N}^{(k)}$ and is independent of $\gamma _{H}^{(k)},$
while $\QTR{sc}{FP}^{(k,\ell _{c})}$ and $\QTR{sc}{IP}^{(k,\ell _{c})}$
depends on $\gamma _{H}^{(k)}$. We have the following estimates.

\begin{proposition}
\label{Thm:Estimates for FP,PP,IP in H1} We have the following estimates.
For the free part, 
\begin{equation}
\left\Vert S^{(1,k)}\text{\textsc{FP}}^{(k,\ell _{c})}\right\Vert
_{L_{t_{k}}^{\infty }\left[ 0,T\right] L_{x,x^{\prime }}^{2}}\leqslant
\sum_{j=0}^{\ell _{c}}2^{k}(4CT^{1/2})^{j}\left\Vert S^{(1,k+j)}\omega
_{N,H}^{(k+j)}(0)\right\Vert _{L_{x,x^{\prime }}^{2}}  \label{E:decomp-02}
\end{equation}%
Provided $T\lesssim E_{0}^{-2}$, the driving part satisfies 
\begin{equation}
\left\Vert S^{(1,k)}\text{\textsc{DP}}^{(k,\ell _{c})}\right\Vert
_{L_{t_{k}}^{\infty }\left[ 0,T\right] L_{x,x^{\prime }}^{2}}
\label{E:decomp-03} \\
\leqslant CT^{1/2}N^{\frac{5}{2}\beta -1}(2E_{0})^{k}k^{2}
\end{equation}%
For the interaction part, we have 
\begin{eqnarray}
&&\left\Vert S^{(1,k)}\text{\textsc{IP}}^{(k,\ell _{c})}\right\Vert
_{L_{t_{k}}^{\infty }\left[ 0,T\right] L_{x,x^{\prime }}^{2}}
\label{E:decomp-05} \\
&\leqslant &2^{k}T^{1/2}(4CT^{1/2})^{\ell _{c}+1}N^{\frac{5\beta }{2}%
}\left\Vert S^{(1,k+\ell _{c}+1)}\omega _{N,H}^{(k+\ell
_{c}+1)}(t)\right\Vert _{L_{t_{k}}^{\infty }\left[ 0,T\right] L_{x,x^{\prime
}}^{2}}
\end{eqnarray}
\end{proposition}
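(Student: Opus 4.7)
My plan is to treat the three estimates uniformly by combining the Klainerman--Machedon (KM) collapsing estimate with the KM board game to obtain a single master bound for iterated $B_N$-Duhamel expressions, and then to substitute the appropriate innermost data/source for each of FP, IP, and DP. The core analytic input is the single-step collapsing estimate
$$\left\| S^{(1,k)} \int_0^{t_k} U^{(k)}(t_k - s)\, B_{N,j,k+1} f^{(k+1)}(s)\, ds \right\|_{L^\infty_{t_k}[0,T] L^2_{x,x'}} \leq C\, T^{1/2}\, \| S^{(1,k+1)} f^{(k+1)} \|_{L^\infty_t L^2_{x,x'}},$$
which holds in the 3D cubic setting for $\beta < 2/5$ and is the central analytic result of \cite{TCNP5, Chen3DDerivation, C-H2/3, C-H<1}. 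The KM board game reorganizes the $j$-fold iteration $J_N^{(k,j)}$ (a priori a sum of $k(k+1)\cdots(k+j-1)$ summands coming from each $B_N^{(k+i)} = \sum_m B_{N,m,k+i}$) into a manageable collection of time-ordered representative terms. Chaining the two produces the master iteration bound
$$\left\|S^{(1,k)}\!\int_{0}^{t_k}\!\!\cdots\!\int_{0}^{t_{k+j-1}} J_N^{(k,j)}(\underline{t}_{(k,j)}) f^{(k+j)}(t_{k+j})\, d\underline{t}_{(k,j)} \right\|_{L^\infty_{t_k} L^2} \leq 2^k (4CT^{1/2})^j\, \| S^{(1,k+j)} f^{(k+j)} \|_{L^\infty_t L^2}.$$

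For the \emph{free part}, I substitute $f_{FP}^{(k,j)}(t_{k+j}) = U^{(k+j)}(t_{k+j}) \omega_{N,0}^{(k+j)}$. Since $U^{(k+j)}$ is unitary on $L^2$ and commutes with $S^{(1,k+j)}$, the right-hand norm in the master bound reduces to $\|S^{(1,k+j)} \omega_{N,H}^{(k+j)}(0)\|_{L^2}$, and summation over $j = 0, \ldots, \ell_c$ produces (\ref{E:decomp-02}). For the \emph{interaction part} I apply the master bound to the outer $\ell_c$ $B_N$'s; the outermost $B_N^{(k+\ell_c+1)}$ acts on the unknown $\omega_{N,H}^{(k+\ell_c+1)}$, where no further collapsing gain is accessible, so I replace the single-step collapsing estimate by the crude H\"older--Sobolev bound
$$\left\| S^{(1,k)} \int_0^t U^{(k)}(t-s)\, B_{N,j,k+1} f^{(k+1)}(s)\, ds \right\|_{L^\infty_t L^2} \leq C\, T^{1/2}\, N^{5\beta/2}\, \| S^{(1,k+1)} f^{(k+1)} \|_{L^\infty_t L^2},$$
in which $V_N$ is placed in $L^p$ (using $\|V_N\|_{L^p} \sim N^{3\beta(1-1/p)}$ with $p$ tuned so the total $N$-power is $5\beta/2$) and the remaining factors are closed by $H^1 \hookrightarrow L^6$. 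Chaining yields (\ref{E:decomp-05}).

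For the \emph{driving part} I apply the master bound to the outer $j$ $B_N$'s and separately estimate the innermost factor $f_{DP}^{(k,j)}(t_{k+j}) = \int_0^{t_{k+j}} U^{(k+j)}(t_{k+j}-s) V_N^{(k+j)} \gamma_N^{(k+j)}(s)\, ds$ by the same $L^p$--Sobolev combination as above, together with the energy hypothesis (b):
$$\|S^{(1,k+j)} f_{DP}^{(k,j)}\|_{L^\infty_t L^2} \leq C\, T^{1/2}\, \frac{(k+j)^2}{N}\, N^{5\beta/2}\, \|S^{(1,k+j)} \gamma_N^{(k+j)}\|_{L^\infty_t L^2} \leq C\, T^{1/2}\, (k+j)^2\, N^{5\beta/2-1}\, (2E_0)^{k+j},$$
where the $(k+j)^2/N$ counts the $\binom{k+j}{2}/N$ pairs in $V_N^{(k+j)}$. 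Substituting into the master bound produces a series $\sum_{j=0}^{\ell_c} (8CT^{1/2}E_0)^j (k+j)^2$, which converges to an $O(k^2)$ bound as soon as $T \lesssim E_0^{-2}$ with implicit constant small enough that $8CT^{1/2}E_0 < 1/2$. This yields (\ref{E:decomp-03}) and explains precisely where the hypothesis $T \lesssim E_0^{-2}$ is consumed.

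The main analytic obstacle is the single-step KM collapsing estimate, which is the technical innovation of \cite{TCNP5, Chen3DDerivation, C-H2/3, C-H<1} and is used here as a black box; the remaining work is bookkeeping. The concrete tasks are (i) tuning the H\"older/Sobolev exponents so that the $N^{3\beta}$-concentration of $V_N$ produces exactly $N^{5\beta/2}$ in the crude bounds, (ii) tracking the board-game combinatorics so that the $k$-dependence factors cleanly as $2^k$ rather than $k!$, and (iii) verifying that the DP geometric series closes uniformly in $\ell_c$ for $T \lesssim E_0^{-2}$, so that the eventual sending $\ell_c \to \infty$ (which is where (\ref{E:decomp-05}) will be used in tandem) is harmless.
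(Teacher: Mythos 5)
Your plan is built on a single-step estimate that is not true as stated: the bound $\Vert S^{(1,k)}\int_0^{t_k}U^{(k)}(t_k-s)B_{N,j,k+1}f^{(k+1)}(s)\,ds\Vert_{L^\infty_{t_k}L^2_{x,x'}}\leq CT^{1/2}\Vert S^{(1,k+1)}f^{(k+1)}\Vert_{L^\infty_t L^2_{x,x'}}$ with $C$ independent of $N$ for an \emph{arbitrary time-dependent} $f^{(k+1)}$ would require the fixed-time bound $\Vert S^{(1,k)}B_{N,j,k+1}g^{(k+1)}\Vert_{L^2}\lesssim\Vert S^{(1,k+1)}g^{(k+1)}\Vert_{L^2}$ uniformly in $N$, which fails: $B_{N,j,k+1}$ is (in the $N\to\infty$ limit) a restriction to $x_{k+1}=x'_{k+1}=x_j$, and with only one derivative per variable this trace does not close without a loss growing in $N$ — this is exactly why the crude interaction-part bound costs $N^{5\beta/2}$. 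The genuine Klainerman--Machedon input (Lemma \ref{Lemma:H^1K-M Estimate}) is a space-time estimate: it controls $B_{N,j,k+1}$ applied to a \emph{free evolution of time-independent data}, in $L^2_t$. Consequently the correct iterated bound (Claim \ref{Claim:IteratedCollapsingEstimates} in the paper) does not absorb the innermost collapsing operator; it stops at $\Vert S^{(1,k+j-1)}B_{N,1,k+j}f^{(k+j)}\Vert_{L^2_{t_{k+j}}L^2}$, and the three parts are then treated differently according to the structure of $f^{(k+j)}$: for \textsc{FP} the data is $U^{(k+j)}(t_{k+j})\omega_{N,0}^{(k+j)}$, so one more application of Lemma \ref{Lemma:H^1K-M Estimate} gives \eqref{E:decomp-02}; for \textsc{IP} one uses the crude derivative-burning bound plus the trace theorem, giving the $N^{5\beta/2}$ in \eqref{E:decomp-05} (your treatment here is in the right spirit, but it is justified by the paper's iterated claim, not by your master bound). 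Your argument never uses the free-evolution structure for \textsc{FP} and never confronts the last collapsing operator for \textsc{DP}, because the false master bound has already absorbed it.

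The second, consequent gap is quantitative: for the driving part you propose to bound $\Vert S^{(1,k+j)}f_{DP}^{(k,j)}\Vert_{L^\infty_tL^2}$ by a fixed-time H\"older--Sobolev estimate with factor $N^{\frac52\beta-1}$. No fixed-time pairing of $\Vert V_N\Vert_{L^p}$ with Sobolev embeddings achieves this: in the term where all derivatives fall on $\gamma_N^{(k+j)}$ there are no derivatives left to trade, forcing $\Vert V_N\Vert_{L^\infty}\sim N^{3\beta}$ (and the terms where derivatives hit $V_N$ are no better), so a fixed-time argument yields at best $N^{3\beta-1}$, not \eqref{E:decomp-03}. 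The exponent $\frac52\beta-1$ in the paper comes from dispersive smoothing: after the board game one estimates the leftover collapsing term via the $X_{\frac12+}$ version of the KM estimate (Lemma \ref{Lemma:KMEstimateInWithX_b}), gains from the Duhamel integral via Claim \ref{Claim:b to b-1}, and then applies the dual Strichartz estimate (Lemma \ref{Lemma:PP Estimate in Strichartz form}), which converts the cost of $V_N$ from the $N^{3\beta}$ scale to $N^{\frac52\beta}$ and simultaneously produces the $L^2_t$ norm that, with the energy bound and H\"older in time, gives the $T^{1/2}E_0^{k+j}$ factor. Without this $X_{s,b}$/Strichartz step (or some substitute exploiting the time integration), the driving-part estimate \eqref{E:decomp-03} is out of reach of your scheme.
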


\begin{proof}
See \S \ref{Sec:2.1}.
\end{proof}

The interaction part is addressed by following the method in \cite%
{Chen3DDerivation} which was inspired by \cite{TCNP5} of using $\ell
_{c}=\ln N$ to gain a negative power of $N$ from the power-of-$T$
coefficient in the above estimate. Then we can use the crude bound 
\begin{equation*}
\left\Vert \omega _{N,H}^{(k+\ell _{c}+1)}(t)\right\Vert \leq \left\Vert
\gamma _{N}^{(k+\ell _{c}+1)}(t)\right\Vert +\left\Vert \gamma _{H}^{(k+\ell
_{c}+1)}(t)\right\Vert
\end{equation*}%
that ignores the difference structure of $\omega _{N,H}$.

\begin{lemma}
For $\ell_c = \ln N$ and provided $T \lesssim E_0^{-2}$, 
\begin{equation}  \label{E:decomp-04}
\left\Vert S^{(1,k)}\text{\textsc{IP}}^{(k,\ell_c)}\right\Vert
_{L_{t_{k}}^{\infty }\left[ 0,T\right] L_{x,x^{\prime }}^{2}}\leqslant
CT^{1/2} (2E_0)^k N^{-2}
\end{equation}
\end{lemma}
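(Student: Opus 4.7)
The plan is to feed the output of Proposition \ref{Thm:Estimates for FP,PP,IP in H1} (estimate \eqref{E:decomp-05}) into the crude pointwise bound on $\omega _{N,H}^{(k+\ell _{c}+1)}$ suggested in the statement, and then choose the free parameters $T$ and $\ell _{c}$ so that the geometric series factor $(4CT^{1/2})^{\ell _{c}+1}$ absorbs both the $N^{5\beta /2}$ loss and the $(2E_{0})^{\ell _{c}+1}$ growth coming from the a priori bounds at level $k+\ell _{c}+1$.

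First, I would estimate
\begin{equation*}
\left\Vert S^{(1,k+\ell _{c}+1)}\omega _{N,H}^{(k+\ell _{c}+1)}(t)\right\Vert _{L_{x,x^{\prime }}^{2}}\leqslant \left\Vert S^{(1,k+\ell _{c}+1)}\gamma _{N}^{(k+\ell _{c}+1)}(t)\right\Vert _{L_{x,x^{\prime }}^{2}}+\left\Vert S^{(1,k+\ell _{c}+1)}\gamma _{H}^{(k+\ell _{c}+1)}(t)\right\Vert _{L_{x,x^{\prime }}^{2}}.
\end{equation*}
The first term on the right is bounded by $(2E_{0})^{k+\ell _{c}+1}$ using hypothesis (b) of Theorem \ref{thm:convergence rate main theorem}. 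The second term is bounded by the same quantity via the $H^{1}$ bound on $\phi _{N}$ in Theorem \ref{Thm:biScattering}(i) together with the factorized form of $\gamma _{H}^{(j)}$. Plugging into \eqref{E:decomp-05} gives
\begin{equation*}
\left\Vert S^{(1,k)}\textsc{IP}^{(k,\ell _{c})}\right\Vert _{L_{t_{k}}^{\infty }[0,T]L_{x,x^{\prime }}^{2}}\leqslant 2^{k+1}T^{1/2}N^{\frac{5\beta }{2}}(2E_{0})^{k}\bigl(8CE_{0}T^{1/2}\bigr)^{\ell _{c}+1}.
\end{equation*}

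Next, I would fix $T=c_{1}E_{0}^{-2}$ with $c_{1}$ chosen small enough that $8Cc_{1}^{1/2}\leqslant e^{-3}$; this makes the quantity inside the last parenthesis equal to a constant strictly less than $e^{-3}$. With the choice $\ell _{c}=\ln N$, the geometric factor becomes
\begin{equation*}
\bigl(8CE_{0}T^{1/2}\bigr)^{\ell _{c}+1}\leqslant e^{-3(\ln N+1)}=e^{-3}N^{-3}.
\end{equation*}
Combining with the explicit $N^{5\beta /2}$ loss and noting that $\beta \in (0,\tfrac{2}{5})$ ensures $\tfrac{5\beta }{2}-3\leqslant -2$, we obtain
\begin{equation*}
\left\Vert S^{(1,k)}\textsc{IP}^{(k,\ell _{c})}\right\Vert _{L_{t_{k}}^{\infty }[0,T]L_{x,x^{\prime }}^{2}}\lesssim T^{1/2}(2E_{0})^{k}N^{\frac{5\beta }{2}-3}\leqslant CT^{1/2}(2E_{0})^{k}N^{-2},
\end{equation*}
as claimed (the factor $2^{k+1}$ is absorbed into the $(2E_{0})^{k}$ constant after rescaling $C$).

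The only genuine difficulty is the calibration of the constants: one must check that the specific constant $c_{1}$ in $T=c_{1}E_{0}^{-2}$ used throughout the paper is compatible with the smallness required to kill both $N^{5\beta /2}$ and the $(2E_{0})^{\ell _{c}+1}$ factor; this is why the hypothesis is phrased as $T\lesssim E_{0}^{-2}$ with an implied sufficiently small constant. Everything else is routine bookkeeping, and no additional cancellation beyond what is already encoded in estimate \eqref{E:decomp-05} is needed.
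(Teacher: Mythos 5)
Your proposal is correct and follows essentially the same route as the paper: plug the crude bound $\|\omega_{N,H}^{(k+\ell_c+1)}\|\le\|\gamma_N^{(k+\ell_c+1)}\|+\|\gamma_H^{(k+\ell_c+1)}\|$ and the energy bounds into \eqref{E:decomp-05}, set $\ell_c=\ln N$, and take the implied constant in $T\lesssim E_0^{-2}$ small enough that the geometric factor overwhelms $N^{\frac{5}{2}\beta}$. The only cosmetic differences are calibration: the paper imposes $4CE_0T^{1/2}\le e^{-5}$, which yields $N^{-5}$ and hence $N^{-2}$ with no restriction on $\beta$, whereas your $e^{-3}$ choice makes you invoke $\beta<\tfrac{2}{5}$ (and your absorption of the $k$-dependent factor $2^{k+1}$ into a rescaled constant is the same bookkeeping looseness the paper's own proof commits, harmless since the later sum in $k$ only needs $Z$ to be a larger multiple of $E_0$).
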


\begin{proof}
By \eqref{E:decomp-05} and the energy bounds on $\gamma_N^{(k+\ell_c+1)}(t)$
and $\gamma_H^{(k+\ell_c+1)}(t)$, it suffices to show that 
\begin{equation*}
(4C E_0 T^{1/2})^{\ell_c+1} N^{\frac52\beta} \leq N^{-2}
\end{equation*}
We assume $T \lesssim E_0^{-2}$, specifically that $T$ is small enough so
that $4CE_0 T^{1/2} \leq e^{-5}$. Then 
\begin{equation*}
(4C E_0 T^{1/2})^{\ell_c+1} \leq e^{-5\ell_c} = e^{-5\ln N} = N^{-5}
\end{equation*}
\end{proof}

That is, the interaction part estimate can be made into $N^{-s}$ for any $s$%
, the limiting factor is solely the potential part which will get better
once one puts in the correlation functions as in \cite{C-H<1}.

Carrying out the sum in $k$ for the estimates in Proposition \ref%
{Thm:Estimates for FP,PP,IP in H1} gives us what we need in the master norm %
\eqref{norm:master capital}.

\begin{theorem}
\label{Thm:BBGKY Estimates in Masternorm} For $T$ and $Z$ such that $%
T\lesssim Z^{-2}$ and $Z\gtrsim E_{0}$, 
\begin{eqnarray*}
&&\Vert \Gamma _{N}(t)-\left\{ \left\vert \phi _{N}(t)\right\rangle
\left\langle \phi _{N}(t)\right\vert ^{\otimes k}\right\} \Vert
_{L_{[0,T]}^{\infty }H_{Z}^{1}} \\
&\leqslant &_{E_{0},Z}C\Vert \Gamma _{N}(0)-\left\{ \left\vert \phi
_{0}\right\rangle \left\langle \phi _{0}\right\vert ^{\otimes k}\right\}
\Vert _{H_{Z/2}^{1}}+CT^{1/2}N^{\frac{5}{2}\beta -1}
\end{eqnarray*}
\end{theorem}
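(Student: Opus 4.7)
The plan is to weight each of the three bounds in Proposition~\ref{Thm:Estimates for FP,PP,IP in H1} by $Z^{-k}$ and sum in $k$, using the decomposition $\omega_{N,H}^{(k)} = \text{FP}^{(k,\ell_{c})} + \text{DP}^{(k,\ell_{c})} + \text{IP}^{(k,\ell_{c})}$ from \eqref{E:decomp-01} with the choice $\ell_{c} = \ln N$ that produces \eqref{E:decomp-04}. The target is the master norm \eqref{norm:master capital}, and the stated inequality follows by estimating the three sums separately and combining.

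For the free part, starting from \eqref{E:decomp-02} and exchanging the two summations,
\begin{align*}
&\sum_{k\geqslant 1}Z^{-k}\Vert S^{(1,k)}\text{FP}^{(k,\ell_{c})}\Vert_{L^{\infty}_{[0,T]}L^{2}_{x,x^{\prime}}} \\
&\qquad \leqslant \sum_{j=0}^{\ell_{c}}(4CT^{1/2})^{j}\sum_{k\geqslant 1}(2/Z)^{k}\Vert S^{(1,k+j)}\omega_{N,H}^{(k+j)}(0)\Vert_{L^{2}}.
\end{align*}
The change of index $m=k+j$ turns the inner sum into $(Z/2)^{j}\sum_{m\geqslant j+1}(2/Z)^{m}\Vert S^{(1,m)}\omega_{N,H}^{(m)}(0)\Vert_{L^{2}}$, which is bounded by $(Z/2)^{j}\,\Vert \Gamma_{N}(0)-\{\vert\phi_{0}\rangle\langle\phi_{0}\vert^{\otimes k}\}\Vert_{H^{1}_{Z/2}}$. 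The outer sum is then geometric in $q=2CZT^{1/2}$; the hypothesis $T\lesssim Z^{-2}$ permits taking $q\leqslant 1/2$, so the entire free-part contribution is bounded by a constant multiple of $\Vert \Gamma_{N}(0)-\{\vert\phi_{0}\rangle\langle\phi_{0}\vert^{\otimes k}\}\Vert_{H^{1}_{Z/2}}$. The factor $2^{k}$ in \eqref{E:decomp-02} is exactly what forces the initial-data norm on the right-hand side of the theorem to appear at weight $Z/2$ rather than $Z$.

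For the driving and interaction parts, applying \eqref{E:decomp-03} and \eqref{E:decomp-04} respectively and summing in $k$ produces
\begin{align*}
CT^{1/2}N^{\frac{5}{2}\beta-1}\sum_{k\geqslant 1}(2E_{0}/Z)^{k}k^{2} \qquad \text{and} \qquad CT^{1/2}N^{-2}\sum_{k\geqslant 1}(2E_{0}/Z)^{k}.
\end{align*}
Both series converge under $Z\gtrsim E_{0}$ (quantitatively $Z>2E_{0}$), each to a constant depending only on $E_{0}$ and $Z$. The DP contribution is the $T^{1/2}N^{\frac{5}{2}\beta-1}$ term in the theorem, and IP is strictly smaller. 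The $\frac{\ln N}{N}$ correction dropped when passing from \eqref{hierarchy:Hartree difference with error} to \eqref{hierarchy:Hartree difference}, treated in the same fashion, produces only a lower-order contribution that is absorbed into the DP term for any $\beta > 0$.

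The main obstacle is the interplay of the parameter constraints: the FP geometric series demands $T$ small relative to $Z$ (namely $T\lesssim Z^{-2}$), while the DP and IP tail sums demand $Z$ large relative to $E_{0}$ ($Z\gtrsim E_{0}$). The theorem hypotheses encode exactly this compatibility, so once Proposition~\ref{Thm:Estimates for FP,PP,IP in H1} is in hand the summation itself is bookkeeping; no further PDE input enters at this stage.
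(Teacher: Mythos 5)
Your proposal is correct and follows essentially the same route as the paper's proof in \S 2.2: weight the three bounds of Proposition \ref{Thm:Estimates for FP,PP,IP in H1} by $Z^{-k}$, handle the free-part double sum by the index change $m=k+j$ (which converts $(4CT^{1/2})^j$ into the geometric factor $(2CZT^{1/2})^j$ and explains the $H^1_{Z/2}$ weight from the $2^k$), and sum the DP and IP tails under $Z\gtrsim E_0$, with the IP and the dropped $\frac{\ln N}{N}$ error absorbed by the $T^{1/2}N^{\frac52\beta-1}$ term. The only difference is the order in which you exchange the sums, which is immaterial.
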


\begin{proof}
See \S \ref{Sec:2.2}.
\end{proof}

\subsection{Proof of Proposition \protect\ref{Thm:Estimates for FP,PP,IP in
H1}\label{Sec:2.1}}

First of all, the summands inside each part can be grouped / combined
together further using the KM board game argument \cite{KM}, which is below,
to avoid a factorial factor.

\begin{lemma}[{\protect\cite[Lemma 2.1]{C-H2/3}}]
\label{lemma:Klainerman-MachedonBoardGameForBBGKY}For $j\geqslant 1$, one
can express 
\begin{equation*}
\int_{0}^{t_{k}}\cdots \int_{0}^{t_{k+j-1}}J_{N}^{(k,j)}(\underline{t}%
_{(k,j)})(f^{(k+j)})d\underline{t}_{(k,j)}
\end{equation*}%
as a sum of at most $2^{k+2j-2}$ terms of the form 
\begin{equation*}
\int_{D}J_{N}^{(k,j)}(\underline{t}_{(k,j)},\mu _{m})(f^{(k+j)})d\underline{t%
}_{(k,j)},
\end{equation*}%
or in other words, 
\begin{equation*}
\int_{0}^{t_{k}}\cdots \int_{0}^{t_{k+j-1}}J_{N}^{(k,j)}(\underline{t}%
_{(k,j)})(f^{(k+j)})d\underline{t}_{(k,j)}=\sum_{m}\int_{D}J_{N}^{(k,j)}(%
\underline{t}_{(k,j)},\mu _{m})(f^{(k+j)})d\underline{t}_{(k,j)}.
\end{equation*}%
Here $D\subset \lbrack 0,t_{k}]^{j}$, $\mu _{m}$ are a set of maps from $%
\{k+1,\ldots ,k+j\}$ to $\{1,\ldots ,k+j-1\}$ and $\mu _{m}(l)<l$ for all $%
l, $ and 
\begin{eqnarray*}
&&J_{N}^{(k,j)}(\underline{t}_{(k,j)},\mu _{m})(f^{(k+j)}) \\
&=&\left( U^{(k)}(t_{k}-t_{k+1})B_{N,\mu _{m}(k+1),k+1}\right) \left(
U^{(k+1)}(t_{k+1}-t_{k+2})B_{N,\mu _{m}(k+2),k+2}\right) \cdots \\
&&\cdots \left( U^{(k+j-1)}(t_{k+j-1}-t_{k+j})B_{N,\mu _{m}(k+j),k+j}\right)
(f^{(k+j)}).
\end{eqnarray*}
\end{lemma}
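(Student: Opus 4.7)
The plan is to prove this via the classical Klainerman--Machedon board game; indeed, the statement is literally Lemma 2.1 of \cite{C-H2/3}, so what follows is an outline of the argument one would either invoke directly or reproduce. First, I would fully expand each coupling operator $B_{N}^{(k+i+1)}=\sum_{l=1}^{k+i}B_{N,l,k+i+1}$ inside the $j$-fold iterated Duhamel integral defining $J_{N}^{(k,j)}(\underline{t}_{(k,j)})(f^{(k+j)})$. This yields the unabridged expansion
\[
\int_{0}^{t_{k}}\!\!\cdots\!\int_{0}^{t_{k+j-1}}\!J_{N}^{(k,j)}(\underline{t}_{(k,j)})(f^{(k+j)})\,d\underline{t}_{(k,j)}=\sum_{\mu}\int_{0\leq t_{k+j}\leq\cdots\leq t_{k+1}\leq t_{k}}\!J_{N}^{(k,j)}(\underline{t}_{(k,j)},\mu)(f^{(k+j)})\,d\underline{t}_{(k,j)},
\]
indexed by all maps $\mu:\{k+1,\ldots,k+j\}\to\{1,\ldots,k+j-1\}$ with $\mu(l)<l$. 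The raw count $\prod_{i=0}^{j-1}(k+i)=(k+j-1)!/(k-1)!$ is factorial in $j$ and would be too crude to sum against $Z^{-k}$ in the master norm.

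Next I would run the board game to reduce this factorial count to exponential. Associate to each $\mu$ a rooted forest on $\{1,\ldots,k+j\}$ in which $l$ is the child of $\mu(l)$, and put $\mu$ into a canonical ``upper echelon'' form by the repeated application of two elementary moves: (i) swapping the names of an adjacent pair of time dummy variables $t_{k+i}\leftrightarrow t_{k+i+1}$, and (ii) merging two adjacent simplex pieces that differ only by such a swap into a single region. Each move leaves the integrand unchanged because $J_{N}^{(k,j)}$ is a product of $B_{N}$'s interlaced with propagators $U^{(k+i)}$, so that reshuffling adjacent labels only reshuffles tensor factors. After all moves are exhausted, every equivalence class $[\mu_{m}]$ collapses into a single integral
\[
\int_{D(\mu_{m})}J_{N}^{(k,j)}(\underline{t}_{(k,j)},\mu_{m})(f^{(k+j)})\,d\underline{t}_{(k,j)},\qquad D(\mu_{m})\subset [0,t_{k}]^{j},
\]
where $D(\mu_{m})$ is the (disjoint) union of the simplices contributed by the members of the class.

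Finally I would bound the number of canonical $\mu_{m}$. At each step of building the canonical tree one effectively makes a binary choice---adjoin a new leaf to the currently deepest allowed ancestor, or open a fresh branch---together with the initial choice of one of the $k$ roots, which produces the crude bound $2^{k+2j-2}$. The main obstacle is bookkeeping rather than ideas: one must verify that the two elementary moves genuinely preserve the integrand under the product structure of $J_{N}^{(k,j)}$, that the simplex-merging step indeed partitions $[0,t_{k}]^{j}$ without overlaps or gaps, and that the canonical-form encoding is injective and uses no more than $k+2j-2$ bits. Since these combinatorics are insensitive to whether the coupling carries a smooth $V_{N}$ or a $\delta$, the argument transfers verbatim from \cite{KM,C-H2/3}, and I would simply cite it rather than redo the full combinatorial bookkeeping here.
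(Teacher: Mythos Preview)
Your proposal is correct and matches the paper's own treatment: the paper does not prove this lemma at all but simply cites it as \cite[Lemma 2.1]{C-H2/3}, exactly as you propose doing in your final sentence. Your sketch of the board game argument (expand, apply the acceptable moves to reach upper-echelon form, count equivalence classes) is a faithful outline of the proof in the cited references \cite{KM,C-H2/3}.
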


The counting $2^{k+2j-2}$ in Lemma \ref%
{lemma:Klainerman-MachedonBoardGameForBBGKY} is actually an easy upper bound
of a Catalan number.

\begin{lemma}[counting of KM reduced forms]
\label{lemma:Klainerman-MachedonCatalanNo}The number of mappings 
\begin{equation*}
\mu :\{k+1,\ldots ,k+j\}\rightarrow \{1,\ldots ,k+j-1\}
\end{equation*}%
satisfying $\mu (r)<r$ for each $k+1\leq r\leq k+j$ that are nondecreasing ($%
\mu (r)\leq \mu (r+1)$ for each $k+1\leq r\leq k+j-1$) is at most the
Catalan number 
\begin{equation}
\mathcal{C}(k,j)\equiv \binom{k+2j-2}{j} \leq 2^{k+2j-2}  \label{def:C(k,j)}
\end{equation}
\end{lemma}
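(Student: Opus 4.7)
The plan is to reduce the count to a classical multiset enumeration. First I would set $a_i = \mu(k+i)$ for $i = 1, \dots, j$, so that the two conditions on $\mu$ translate cleanly to: $1 \le a_1 \le a_2 \le \cdots \le a_j$ together with $a_i \le k+i-1$ for each $i$. This is a nondecreasing integer sequence constrained by a staircase upper envelope.

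Next I would relax the staircase constraint $a_i \le k+i-1$ to the uniform constraint $a_i \le k+j-1$, which is weaker since $k+i-1 \le k+j-1$ whenever $i \le j$. Thus the number of admissible $\mu$ is bounded above by the number of nondecreasing sequences $1 \le a_1 \le \cdots \le a_j \le k+j-1$, equivalently the number of $j$-element multisets drawn from a set of $k+j-1$ elements. By the standard stars-and-bars formula, this equals
\[
\binom{(k+j-1)+j-1}{j} = \binom{k+2j-2}{j},
\]
which is the Catalan-type quantity $\mathcal{C}(k,j)$ in the statement.

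Finally, to obtain the crude bound $\binom{k+2j-2}{j} \le 2^{k+2j-2}$, I would note that any single binomial coefficient $\binom{n}{\ell}$ is one summand of $\sum_{\ell=0}^{n}\binom{n}{\ell} = 2^n$, and specialize to $n = k+2j-2$, $\ell = j$. There is no real obstacle in the argument; the only point requiring care is the correct conversion of the strict inequality $\mu(r) < r$ (which becomes $a_i \le k+i-1$) and the monotonicity $\mu(r) \le \mu(r+1)$ (which becomes $a_i \le a_{i+1}$) into the right shape for stars-and-bars. Once that translation is made, both inequalities in \eqref{def:C(k,j)} are immediate.
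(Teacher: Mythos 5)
Your proof is correct and is in substance the same as the paper's: the paper passes from $\mu$ to the strictly increasing sequence $s(a)=\mu(k+a)+a-1$ inside $\{1,\ldots,k+2j-2\}$, which is exactly the standard shift underlying your stars-and-bars count of nondecreasing sequences, and both arguments obtain $\binom{k+2j-2}{j}$ as an over-count by giving up part of the staircase constraint $\mu(k+i)\le k+i-1$. The only cosmetic difference is where the slack enters (you relax the staircase to the uniform bound $a_i\le k+j-1$ and then count that larger set exactly, while the paper injects into increasing sequences and observes the inverse map need not be admissible), so nothing further is needed.
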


\begin{proof}
We can associate to every reduced map $\mu$ a sequence $s$ 
\begin{equation*}
s(1) = \mu(k+1) \,, \quad s(2) = \mu(k+2)+1 \,, \quad \ldots, \quad s(j) =
\mu(k+j)+j-1
\end{equation*}
Note that $s$ is a (strictly) increasing subsequence of $\{1, \ldots,
k+2j-2\}$ of length $j$. Moreover, this process of converting from $\mu$ to $%
s$ is invertible: for any increasing subsequence of $\{1, \ldots, k+2j-2\}$
of length $j$, let $\mu$ be defined by 
\begin{equation*}
\mu(k+a) = s(a) - a+1 \,, \qquad \text{for }a = 1, \ldots, j
\end{equation*}
Since $s$ necessarily satisfies $s(a) \leq k+j+a-2$, it follows that $%
\mu(k+a) \leq k+j-2$ but this condition is not strong enough to guarantee
admissibility $\mu(k+a) \leq k+a-1$. Thus, the count of the number of
increasing subsequences of $\{1, \ldots, k+2j-2\}$ of length $j$, which is %
\eqref{def:C(k,j)}, is an over-count of the number of reduced admissible
maps $\mu$, but a useful upper bound.
\end{proof}

We can then estimate $J_{N}^{(k,j)}(\underline{t}_{(k,j)})(f^{(k+j)})$ via
the collapsing estimate in Lemma \ref{Lemma:H^1K-M Estimate}.

\begin{claim}
\label{Claim:IteratedCollapsingEstimates}For $j\geqslant 1$,%
\begin{align*}
\hspace{0.3in}&\hspace{-0.3in} \left\Vert \int_{0}^{t_{k}}\cdots
\int_{0}^{t_{k+j-1}}S^{(1,k)}J_{N}^{(k,j)}(\underline{t}_{(k,j)})(f^{(k+j)})d%
\underline{t}_{(k,j)}\right\Vert _{L_{t_{k}}^{\infty }\left[ 0,T\right]%
L_{x,x^{\prime }}^{2}} \\
&\leq 2^k (4CT^{1/2})^j \left\| S^{(1,k+j-1)} B_{N,1,k+j} f^{(k+j)}(t_{k+j})
\right\|_{L_{t_{k+j}}^2[0,T] L_{x,x^{\prime }}^2}
\end{align*}
\end{claim}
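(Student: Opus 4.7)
The plan is to prove this claim by combining the Klainerman--Machedon board game from Lemma \ref{lemma:Klainerman-MachedonBoardGameForBBGKY} with an iterated application of the $H^1$ collapsing estimate (Lemma \ref{Lemma:H^1K-M Estimate}). First, I invoke the board game to rewrite the time-simplex integral on the left-hand side as a sum of at most $2^{k+2j-2}$ reduced terms of the form $\int_D J_{N}^{(k,j)}(\underline{t}_{(k,j)},\mu_m)(f^{(k+j)})\,d\underline{t}_{(k,j)}$, in which each $B_N^{(k+\ell)}$ has been collapsed to a single binary interaction operator $B_{N,\mu_m(k+\ell),k+\ell}$ and the integration takes place over a suitable subset $D\subset[0,t_k]^j$.

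Second, for a fixed reduced term I set up an induction that peels off one Duhamel slab at a time, starting from the outermost. Taking the $L_{t_k}^\infty L_{x,x'}^2$ norm and using Minkowski's inequality together with unitarity of $U^{(k)}$ converts the $L_{t_k}^\infty$ norm into an $L^1_{t_{k+1}}[0,T]$ norm of $S^{(1,k)} B_{N,\mu_m(k+1),k+1}$ applied to the remaining Duhamel integrand; Cauchy--Schwarz in $t_{k+1}$ then costs a factor $T^{1/2}$ and leaves an $L^2_{t_{k+1}}[0,T] L_{x,x'}^2$ norm. At this step I apply the $H^1$ collapsing estimate, which trades $S^{(1,k)} B_{N,\mu_m(k+1),k+1}$ acting on the inner Duhamel integrand for a constant $C$ times an $L^\infty_{t_{k+1}}L_{x,x'}^2$ norm of $S^{(1,k+1)}$ applied to that integrand. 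Iterating this peel-and-collapse step $j-1$ times accumulates a factor of $(CT^{1/2})^{j-1}\cdot T^{1/2}$ and reduces the estimate to a single $L^2_{t_{k+j}}[0,T]L_{x,x'}^2$ norm of $S^{(1,k+j-1)} B_{N,\mu_m(k+j),k+j} f^{(k+j)}$.

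Third, by the symmetry of $f^{(k+j)}$ in its particle labels (inherited from the symmetric BBGKY setup and the symmetry-preserving action of the operators above), the final norm is independent of the specific choice of index $\mu_m(k+j)\in\{1,\dots,k+j-1\}$, so I may freely replace it by the particle index $1$. Summing over at most $2^{k+2j-2}$ reduced terms produces the total constant $2^{k+2j-2}(CT^{1/2})^j = 2^{k-2}(4CT^{1/2})^j$, which is dominated by $2^k(4CT^{1/2})^j$ as stated.

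The main obstacle is the bookkeeping in the second step: ensuring that after each Cauchy--Schwarz in time I land precisely in the $L^2_t L^2_x$ space to which the collapsing estimate applies, and that the output $L^\infty_t L^2_x$ norm is suitable as the starting point of the next iteration. Some care is also needed to handle the time-ordering $D\subset[0,t_k]^j$ produced by the board game reduction, but since the nested Duhamel formula can be unfolded one slab at a time, a direct induction (or a Christ--Kiselev type argument) suffices, and the factor $T^{1/2}$ is captured at each step without loss. The rest of the proof is standard $\langle\nabla\rangle^\alpha$-bookkeeping within the $S^{(1,\cdot)}$ framework.
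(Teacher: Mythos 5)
Your proposal is correct and follows essentially the same route as the paper: the Klainerman--Machedon board game reduction to at most $2^{k+2j-2}$ terms, then Minkowski/unitarity, Cauchy--Schwarz in each intermediate time variable (each costing $T^{1/2}$), the collapsing estimate of Lemma \ref{Lemma:H^1K-M Estimate} applied $j-1$ times, symmetry of $f^{(k+j)}$ to replace $B_{N,\mu_m(k+j),k+j}$ by $B_{N,1,k+j}$, and a final Cauchy--Schwarz in $t_{k+j}$. The only cosmetic difference is your bookkeeping of intermediate norms (keeping an $L^\infty_t$ placeholder versus the paper's keeping the remaining time integrals outside the $L^2_{x,x'}$ norm), which does not affect the argument.
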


\begin{proof}
The proof follows the same steps usually used to estimate%
\begin{equation*}
\left\Vert \int_{0}^{t_{k}}\cdots
\int_{0}^{t_{k+j-1}}S^{(1,k-1)}B_{N,1,k}J_{N}^{(k,j)}(\underline{t}%
_{(k,j)})(f^{(k+j)})d\underline{t}_{(k,j)}\right\Vert _{L_{t_{k}}^{1}\left[
0,T\right] L_{x,x^{\prime }}^{2}}
\end{equation*}%
and is well-known by now. We include the proof for completeness. We start by
using Lemma \ref{lemma:Klainerman-MachedonBoardGameForBBGKY}, 
\begin{eqnarray*}
&&\left\Vert \int_{0}^{t_{k}}\cdots
\int_{0}^{t_{k+j-1}}S^{(1,k)}J_{N}^{(k,j)}(\underline{t}_{(k,j)})(f^{(k+j)})d%
\underline{t}_{(k,j)}\right\Vert _{L_{t_{k}}^{\infty }\left[ 0,T\right]
L_{x,x^{\prime }}^{2}} \\
&\leqslant &2^{k}4^{j}\left\Vert \int_{D}S^{(1,k)}J_{N}^{(k,j)}(\underline{t}%
_{(k,j)},\mu _{m})(f^{(k+j)})d\underline{t}_{(k,j)}\right\Vert
_{L_{t_{k}}^{\infty }L_{x,x^{\prime }}^{2}} \\
&\leqslant &2^{k}4^{j}\int_{\left[ 0,T\right] ^{j}}\left\Vert
S^{(1,k)}J_{N}^{(k,j)}(\underline{t}_{(k,j)},\mu _{m})(f^{(k+j)})\right\Vert
_{L_{x,x^{\prime }}^{2}}d\underline{t}_{(k,j)}
\end{eqnarray*}%
Cauchy-Schwarz at $d{t_{k+1}}$%
\begin{eqnarray*}
&\leqslant &2^{k}4^{j}T^{\frac{1}{2}}\int_{\left[ 0,T\right] ^{j-1}}d%
\underline{t}_{(k+1,j-1)} \\
&&\times \left\Vert S^{(1,k)}B_{N,\mu
_{m}(k+1),k+1}U^{(k+1)}(t_{k+1}-t_{k+2})...\right\Vert
_{L_{t_{k}}^{2}L_{x,x^{\prime }}^{2}}
\end{eqnarray*}%
Use Lemma \ref{Lemma:H^1K-M Estimate},%
\begin{eqnarray*}
&\leqslant &2^{k}4^{j}CT^{\frac{1}{2}}\int_{\left[ 0,T\right] ^{j-1}}d%
\underline{t}_{(k+1,j-1)} \\
&&\times \left\Vert S^{(1,k+1)}B_{N,\mu
_{m}(k+2),k+2}U^{(k+2)}(t_{k+2}-t_{k+3})...\right\Vert _{L_{x,x^{\prime
}}^{2}}
\end{eqnarray*}%
Repeating such a process gives%
\begin{equation*}
\leqslant 2^{k}4^{j}C^{j-1}T^{\frac{j-1}{2}}\int_{[0,T]}\left\Vert
S^{(1,k+j-1)}B_{N,\mu _{m}(k+j),k+j}(f^{(k+j)})\right\Vert _{L_{x,x^{\prime
}}^{2}}dt_{k+j}
\end{equation*}%
By symmetry, 
\begin{equation*}
=2^{k}\left( 4CT^{\frac{1}{2}}\right) ^{j-1}\int_{[0,T]}\left\Vert
S^{(1,k+j-1)}B_{N,1,k+j}(f^{(k+j)})\right\Vert _{L_{x,x^{\prime
}}^{2}}dt_{k+j}
\end{equation*}%
Applying Cauchy-Schwarz in time once more yields the claim.
\end{proof}

Starting with the formulae for $\text{FP}^{(k,\ell _{c})}$, D$\text{P}%
^{(k,\ell _{c})}$, and $\text{IP}^{(k,\ell _{c})}$, we apply Lemma \ref%
{lemma:Klainerman-MachedonBoardGameForBBGKY} using the bound in Lemma \ref%
{lemma:Klainerman-MachedonCatalanNo} to reduce the number of Duhamel terms,
and apply the estimate in Claim \ref{Claim:IteratedCollapsingEstimates} for
each term. This provides preliminary estimates for the three parts in the
expansion of $\omega _{N,H}^{(k)}$.

Specifically, for the free part, this yields 
\begin{eqnarray*}
&&\left\Vert S^{(1,k)}\text{\textsc{FP}}^{(k,\ell _{c})}\right\Vert
_{L_{t_{k}}^{\infty }\left[ 0,T\right] L_{x,x^{\prime }}^{2}} \\
&\leq &\Vert S^{(1,k)}f_{\text{FP}}^{(k,0)}(t_{k})\Vert _{L_{t_{k}}^{\infty
}[0,T]L_{x,x^{\prime }}^{2}} \\
&&+2^{k}\sum_{j=1}^{\ell _{c}}\left( 4CT^{\frac{1}{2}}\right) ^{j}\Vert
S^{(1,k+j-1)}B_{N,1,j+k}f_{\text{FP}}^{(k,j)}(t_{k+j})\Vert
_{L_{t_{k+j}}^{2}[0,T]L_{x,x^{\prime }}^{2}}
\end{eqnarray*}%
Plugging in $f_{\text{FP}}^{(k,j)}$ and applying the Klainerman-Machedon
trilinear estimate (Lemma \ref{Lemma:H^1K-M Estimate}), 
\begin{equation*}
\leq \Vert S^{(1,k)}\omega _{N,H}^{(k)}(0)\Vert _{L_{t_{k}}^{\infty
}[0,T]L_{x,x^{\prime }}^{2}}+2^{k}\sum_{j=1}^{\ell _{c}}\left( 4CT^{\frac{1}{%
2}}\right) ^{j}\Vert S^{(1,k+j)}\omega _{N,H}^{(k+j)}(0)\Vert
_{L_{x,x^{\prime }}^{2}}
\end{equation*}%
which completes the proof for the free part in Proposition \ref%
{Thm:Estimates for FP,PP,IP in H1}.

For the driving part, this yields%
\begin{eqnarray}
&&\left\Vert S^{(1,k)}\text{\textsc{DP}}^{(k,\ell _{c})}\right\Vert
_{L_{t_{k}}^{\infty }\left[ 0,T\right] L_{x,x^{\prime }}^{2}}
\label{E:PP-intermediate-01} \\
&\leq &\Vert S^{(1,k)}f_{\text{DP}}^{(k,0)}(t_{k})\Vert _{L_{t_{k}}^{\infty
}[0,T]L_{x,x^{\prime }}^{2}}  \notag \\
&&+2^{k}\sum_{j=1}^{\ell _{c}}\left( 4CT^{\frac{1}{2}}\right) ^{j}\Vert
S^{(1,k+j-1)}B_{N,1,k+j}f_{\text{DP}}^{(k,j)}(t_{k+j})\Vert
_{L_{t_{k+j}}^{2}[0,T]L_{x,x^{\prime }}^{2}}  \notag
\end{eqnarray}

For the interaction part, this yields 
\begin{eqnarray}
&&\left\Vert S^{(1,k)}\text{\textsc{IP}}^{(k,\ell_c)}\right\Vert
_{L_{t_{k}}^{\infty }\left[ 0,T\right] L_{x,x^{\prime }}^{2}}
\label{estimate:IP estimate after iteration} \\
&\leqslant & 2^k \left( 4CT^{\frac{1}{2}}\right) ^{\ell_c+1}\left\Vert
S^{(1,k+\ell_c)}B_{N,1,k+\ell_c+1}\omega
_{N,H}^{(k+\ell_c+1)}(t_{k+\ell_c+1})\right\Vert
_{L_{t_{k+\ell_c+1}^2}L_{x,x^{\prime }}^{2}}  \notag
\end{eqnarray}%
We continue the estimates of the driving part and the interaction part
separately below.

\subsubsection{Estimate for the Driving Part}

We complete the bound of the right side of \eqref{E:PP-intermediate-01}.
Using the $X_{\frac{1}{2}+}^{(k+j)}\hookrightarrow L_{t_{k+j}}^{\infty
}[0,T]L_{x,x^{\prime }}^{2}$ embedding, 
\begin{equation*}
\Vert f_{\text{DP}}^{(k,0)}(t_{k})\Vert _{L_{t_{k}}^{\infty
}[0,T]L_{x,x^{\prime }}^{2}}\leq C\Vert \theta (t_{k})S^{(1,k)}f_{\text{DP}%
}^{(k,0)}(t_{k})\Vert _{X_{\frac{1}{2}+}^{(k)}}
\end{equation*}%
where $\theta (t)$ is a smooth cutoff in time such that $\theta (t)=1$ on $%
[0,T]$. For $1\leq j\leq \ell _{c}$, by Lemma \ref{Lemma:KMEstimateInWithX_b}
(a version of the Klainerman-Machedon trilinear estimate with $X$-norm on
the right side) 
\begin{eqnarray*}
&&\Vert S^{(1,k+j-1)}B_{N,1,k+j}f_{\text{DP}}^{(k,j)}(t_{k+j})\Vert
_{L_{t_{k+j}}^{2}[0,T]L_{x,x^{\prime }}^{2}} \\
&\leq &C\Vert \theta (t_{k+j})S^{(1,k+j)}f_{\text{DP}}^{(k,j)}(t_{k+j})\Vert
_{X_{\frac{1}{2}+}^{(k+j)}}
\end{eqnarray*}%
Thus to complete the bound of \eqref{E:PP-intermediate-01}, it remains to
estimate for $0\leq j\leq \ell _{c}$, 
\begin{equation}
\Vert \theta (t_{k+j})S^{(1,k+j)}f_{\text{DP}}^{(k,j)}(t_{k+j})\Vert _{X_{%
\frac{1}{2}+}^{(k+j)}}  \label{E:PP-intermediate-02}
\end{equation}%
Referring to the definition \eqref{eqn:def of f inside PP} of $f_{\text{DP}%
}^{(k,j)}$, insert $\tilde{\theta}(t_{k+j+1})$ inside the integrand, where $%
\tilde{\theta}(t)$ is a smooth cutoff in time such that $\tilde{\theta}(t)=1$
on the support of $\theta (t)$. Applying Claim \ref{Claim:b to b-1}, 
\begin{equation*}
\leq C\Vert \tilde{\theta}(t_{k+j+1})S^{(1,k+j)}V_{N}^{(k+j)}\gamma
_{N}^{(k+j)}(t_{k+j+1})\Vert _{X_{-\frac{1}{2}+}^{(k+j)}}
\end{equation*}%
By dual Strichartz (Lemma \ref{Lemma:PP Estimate in Strichartz form}) we
complete the bound of \eqref{E:PP-intermediate-02} by 
\begin{eqnarray*}
&\leq &CN^{\frac{5}{2}\beta -1}(k+j)^{2}\Vert S^{(1,k+j)}\gamma
_{N}^{(k+j)}(t_{k+j+1})\Vert _{L_{t_{k+j+1}}^{2}L_{x,x^{\prime }}^{2}} \\
&\leq &CT^{1/2}N^{\frac{5}{2}\beta -1}(k+j)^{2}E_{0}^{k+j}
\end{eqnarray*}%
where, in the last step, we appealed to the energy bound and the $(k+j)^{2}$
factor came from the expansion of $V_{N}^{(k+j)}$ into component terms.
Inserting this to bound of \eqref{E:PP-intermediate-02} into the right side
of \eqref{E:PP-intermediate-01}, 
\begin{align*}
\Vert S^{(1,k)}\text{DP}^{(k,\ell _{c})}\Vert _{L_{t_{k}}^{\infty
}[0,T]L_{x,x^{\prime }}^{2}}& \leq CT^{1/2}N^{\frac{5}{2}\beta
-1}2^{k}\sum_{j=0}^{\ell _{c}}(k+j)^{2}(4CT^{1/2})^{j}E_{0}^{k+j} \\
& \leq CT^{1/2}N^{\frac{5}{2}\beta -1}(2E_{0})^{k}k^{2}
\end{align*}%
provided $T$ is small enough so that $4CT^{1/2}E_{0}\leq \frac{1}{2}$, which
completes the bound for the driving part in Proposition \ref{Thm:Estimates
for FP,PP,IP in H1}.

\subsubsection{Estimate for the Interaction Part}

From \eqref{estimate:IP estimate after iteration}, we see that it remains to
bound 
\begin{equation}
\left\Vert S^{(1,k+\ell _{c})}B_{N,1,k+\ell _{c}+1}\omega _{N,H}^{(k+\ell
_{c}+1)}(t_{k+\ell _{c}+1})\right\Vert _{L_{t_{k+\ell
_{c}+1}^{2}}L_{x,x^{\prime }}^{2}}  \label{E:IP-intermediate-01}
\end{equation}%
Take the crude estimate that burns derivatives and gains bad powers of $N$: 
\begin{eqnarray*}
&&\left\Vert S^{(1,k+\ell _{c})}B_{N,1,k+\ell _{c}+1}\omega _{N,H}^{(k+\ell
_{c}+1)}(t_{k+\ell _{c}+1})\right\Vert _{L_{t_{k+\ell
_{c}+1}^{2}}L_{x,x^{\prime }}^{2}}^{2} \\
&\leqslant &CTN^{2\beta }\left\Vert V_{N}^{\prime }\right\Vert
_{L^{2}}^{2}\left\Vert S^{(1,k+\ell _{c})}\omega _{N,H}^{(k+\ell _{c}+1)}(t%
\mathbf{,x}_{k},x_{k+1},\mathbf{x}_{k}^{\prime },x_{k+1})\right\Vert
_{L_{t}^{\infty }\left[ 0,T\right] L_{x,x^{\prime }}^{2}}^{2}
\end{eqnarray*}%
and use the trace theorem, 
\begin{equation*}
\leqslant CTN^{5\beta }\left\Vert V^{\prime }\right\Vert
_{L^{2}}^{2}\left\Vert S^{(1,k+\ell _{c}+1)}\omega _{N,H}^{(k+\ell
_{c}+1)}\right\Vert _{L_{t}^{\infty }\left[ 0,T\right] L_{x,x^{\prime
}}^{2}}^{2}.
\end{equation*}%
Inserting this estimate of \eqref{E:IP-intermediate-01} into %
\eqref{estimate:IP estimate after iteration}, 
\begin{eqnarray*}
&&\left\Vert S^{(1,k)}\text{\textsc{IP}}^{(k,\ell _{c})}\right\Vert
_{L_{t_{k}}^{\infty }\left[ 0,T\right] L_{x,x^{\prime }}^{2}} \\
&\leqslant &2^{k}T^{1/2}(4CT^{1/2})^{\ell _{c}+1}N^{\frac{5\beta }{2}%
}\left\Vert S^{(1,k+\ell _{c}+1)}\omega _{N}^{(k+\ell _{c}+1)}(t)\right\Vert
_{L_{t_{k}}^{\infty }\left[ 0,T\right] L_{x,x^{\prime }}^{2}}
\end{eqnarray*}

\subsection{Summing in $k$ / Proof of Theorem \protect\ref{Thm:BBGKY
Estimates in Masternorm}\label{Sec:2.2}}

Using the definition of the master norm \eqref{norm:master capital} and the
decomposition \eqref{E:decomp-01}%
\begin{eqnarray*}
&&\Vert \Gamma _{N}(t)-\left\{ \left\vert \phi _{N}(t)\right\rangle
\left\langle \phi _{N}(t)\right\vert ^{\otimes k}\right\} \Vert
_{L_{[0,T]}^{\infty }H_{Z}^{1}} \\
&\leqslant &\sum_{k=0}^{\infty }Z^{-k}(\Vert S^{(1,k)}\text{FP}^{(k,\ell
_{c})}\Vert _{L_{t_{k}}^{\infty }[0,T]L_{x,x^{\prime }}^{2}} \\
&&+\Vert S^{(1,k)}\text{DP}^{(k,\ell _{c})}\Vert _{L_{t_{k}}^{\infty
}[0,T]L_{x,x^{\prime }}^{2}}+\Vert S^{(1,k)}\text{IP}^{(k,\ell _{c})}\Vert
_{L_{t_{k}}^{\infty }[0,T]L_{x,x^{\prime }}^{2}})
\end{eqnarray*}%
Applying the bounds on each component in \eqref{E:decomp-02}, %
\eqref{E:decomp-03}, \eqref{E:decomp-04}, we obtain%
\begin{eqnarray*}
&&\Vert \Gamma _{N}(t)-\left\{ \left\vert \phi _{N}(t)\right\rangle
\left\langle \phi _{N}(t)\right\vert ^{\otimes k}\right\} \Vert
_{L_{[0,T]}^{\infty }H_{Z}^{1}} \\
&\leqslant &\sum_{k=0}^{\infty }\sum_{j=0}^{\ell
_{c}}(2Z^{-1})^{k}(4CT^{1/2})^{j}\Vert S^{(1,k+j)}\omega
_{N,H}^{(k+j)}(0)\Vert _{L_{x,x^{\prime }}^{2}} \\
&&+CT^{1/2}N^{\frac{5}{2}\beta -1}\sum_{k=0}^{\infty }(2E_{0}Z^{-1})^{k}k^{2}
\end{eqnarray*}%
In the double sum, changing $(k,j)$ to $(m,j)$ where $m=k+j$, and using the
discrete Fubini that $\sum_{k=0}^{\infty }\sum_{j=0}^{\ell
_{c}}=\sum_{j=0}^{\ell _{c}}\sum_{k=0}^{\infty }=\sum_{j=0}^{\ell
_{c}}\sum_{m=j}^{\infty }=\sum_{m=0}^{\infty }\sum_{j=0}^{\min (m,\ell
_{c})} $, we get%
\begin{eqnarray*}
&&\Vert \Gamma _{N}(t)-\left\{ \left\vert \phi _{N}(t)\right\rangle
\left\langle \phi _{N}(t)\right\vert ^{\otimes k}\right\} \Vert
_{L_{[0,T]}^{\infty }H_{Z}^{1}} \\
&\leqslant &\sum_{m=0}^{\infty }\sum_{j=0}^{\min (m,\ell
_{c})}(2Z^{-1})^{m}(2CZT^{1/2})^{j}\Vert S^{(1,m)}\omega
_{N,H}^{(m)}(0)\Vert _{L_{x,x^{\prime }}^{2}} \\
&&+CT^{1/2}N^{\frac{5}{2}\beta -1}\sum_{k=0}^{\infty }(2E_{0}Z^{-1})^{k}k^{2}
\end{eqnarray*}%
Provided $Z\gtrsim E_{0}$ and $T\lesssim Z^{-2}$, we can carry out the $j$
and $k$ sums. This completes the proof of Theorem \ref{Thm:BBGKY Estimates
in Masternorm}.

\section{Comparing H-NLS and NLS\label{sec:Hnls vs NLS}}

In this section, we give the proof of Theorem \ref{Thm:biScattering} which
will be concluded after Propositions \ref{P:diff-control} and \ref%
{P:diff-control-ep}. The estimate \eqref{estimate:key-2 for almost optimal}
in the introduction then follow.

We need the atomic $U$ spaces introduced by Koch \& Tataru \cite{KT-UV1,
KT-UV2} and the $V$ spaces of bounded $p$-variation of Wiener \cite{Wie-V}.
Their properties have been further elaborated in Hadac, Herr, \& Koch \cite%
{HHK} and Koch, Tataru, \& Visan \cite{KTV}. Here, following \cite[%
Definition 2.1 and Definition 2.3]{HHK} (see the slight change in the
erratum for that paper), we define $U^{p}(I;H)$ and $V^{p}(I;H)$, where $I=%
\left[ T_{1},T_{2}\right) \subset \mathbb{R}$ is a time interval and $H$ is
a Hilbert space (in $x$) below.

Let $\mathcal{Z}$ be the set of all finite partitions $%
T_{1}=t_{0}<t_{1}<...<t_{K}\leqslant T_{2}$ of $I\ $and let us use the
convention that $v(T_{2})=0$ for all functions $v:I\rightarrow H$.

\begin{definition}
Let $p\in \left[ 1,\infty \right) .$We call a function $a:I\rightarrow H$ a $%
U^{p}$-atom if it takes the form $a=\sum_{k=1}^{K}\mathbf{1}_{\left[
t_{k-1},t_{k}\right) }\phi _{k-1}$ where $\left\{ t_{k}\right\} \in $ $%
\mathcal{Z}$ and $\left\{ \phi _{k}\right\} \subset H$ with $%
\sum_{k=0}^{K-1}\left\Vert \phi _{k}\right\Vert _{H}^{p}=1$. The atomic
space $U^{p}(I;H)\subset L^{\infty }(I;H)$ is the space of functions $%
u:I\rightarrow H$ given the norm:%
\begin{equation*}
\left\Vert u\right\Vert _{U^{p}}=\inf \left\{ \sum_{j=1}^{\infty }\left\vert
\lambda _{j}\right\vert :u=\sum_{j=1}^{\infty }\lambda _{j}a,\text{ }\lambda
_{j}\in \mathbb{C}\text{, }a_{j}\text{ is a }U^{p}\text{-atom for all }%
j\right\} .
\end{equation*}
\end{definition}

\begin{definition}
Let $p\in \left[ 1,\infty \right) .$The space $V^{p}(I;H)$ is the space of
of all functions $v:I\rightarrow H$ such that 
\begin{equation*}
\left\Vert v\right\Vert _{V^{p}}=\sup_{\left\{ t_{k}\right\} \in \mathcal{Z}%
}\left( \sum_{j=1}^{\infty }\left\Vert v(t_{k})-v(t_{k-1})\right\Vert
_{H}^{p}\right) ^{\frac{1}{p}}<+\infty
\end{equation*}%
and the space $V_{\text{rc}}^{p}(I;H)$ denotes the closed subspace of all
right-continuous functions $v:I\rightarrow H$ such that $v(T_{1})=0.$
\end{definition}

We have, for $1\leq p<q<\infty $ (see Proposition 2.4, Corollary 2.6 in \cite%
{HHK}) the continuous embeddings 
\begin{equation}
U^{p}\hookrightarrow V_{\text{rc}}^{p}\hookrightarrow U^{q}\hookrightarrow
L^{\infty }  \label{E:UV-basic-embeddings}
\end{equation}%
We in fact work exclusively with the variants $U_{\Delta }^{p}L_{x}^{2}$, $%
V_{\Delta }^{p}L_{x}^{2}$ defined as the $U^{p}L_{x}^{2}$ and $%
V^{p}L_{x}^{2} $ norms, respectively, after pulling-back by the linear flow $%
e^{it\Delta }$ (as in \cite[Definition 2.15]{HHK}), and will denote the
restriction of such norms to a time subinterval $I$ as $U_{I,\Delta
}^{p}L_{x}^{2}$ and $V_{I,\Delta }^{p}L_{x}^{2}$.

It is immediate from the definition of the $U^p_{I,\Delta}L_x^2$ norm that
for any $1\leq p <\infty$, 
\begin{equation*}
\| e^{it\Delta} \phi \|_{U^p_{I,\Delta}L_x^2} \leq \|\phi\|_{L_x^2}
\end{equation*}
From \cite[Theorem 2.8, Proposition 2.10]{HHK}, we have the duality
relationship 
\begin{equation}  \label{E:UV-dual}
\left\| \int_0^t e^{i(t-t^{\prime })\Delta} f(t^{\prime }) \, dt^{\prime
}\right\|_{U^2_{I,\Delta}L_x^2} = \sup_{\substack{ g\in V^2_{I,\Delta}L_x^2 
\\ \|g\|_{V^2_{I,\Delta}L_x^2}\leq 1}} \left| \int_I \int_x f(x,t) \, g(x,t)
\, dx \,dt \right|
\end{equation}
which is key to estimating Duhamel terms.

It follows from \cite[Proposition 2.19]{HHK} that the Strichartz estimates
imply 
\begin{equation}
\Vert u\Vert _{L_{I}^{q}L_{x}^{r}}\lesssim \Vert u\Vert _{U_{I,\Delta
}^{q}L_{x}^{2}}  \label{E:Str-U}
\end{equation}%
for admissible $(q,r)$: 
\begin{equation*}
\frac{2}{q}+\frac{3}{r}=\frac{3}{2}\,,\qquad 2\leq q<\infty \,,\quad 2\leq
r\leq 6
\end{equation*}%
where we note that the $q$ exponent appears on both the left and right. From %
\eqref{E:UV-basic-embeddings}, the larger the $q$, the smaller the right
side (the better the resulting bound) in \eqref{E:Str-U}.

Also, from \cite[Proposition 2.20]{HHK}, we have the following property as a
substitute for the failure of the $V^{2}\hookrightarrow U^{2}$ embedding
(compare \eqref{E:UV-basic-embeddings}). If $T$ is a bilinear operator
satisfying 
\begin{equation}
\Vert T(u_{1},u_{2})\Vert _{L_{I}^{2}L_{x}^{2}}\leq C\Vert u_{1}\Vert
_{U_{I,\Delta }^{q}L_{x}^{2}}\Vert u_{2}\Vert _{U_{I,\Delta }^{q}L_{x}^{2}}
\label{E:UtoV01}
\end{equation}%
for some $q>2$ and 
\begin{equation}
\Vert T(u_{1},u_{2})\Vert _{L_{I}^{2}L_{x}^{2}}\leq C_{2}\Vert u_{1}\Vert
_{U_{I,\Delta }^{2}L_{x}^{2}}\Vert u_{2}\Vert _{U_{I,\Delta }^{2}L_{x}^{2}}
\label{E:UtoV02}
\end{equation}%
then it follows that 
\begin{equation}
\Vert T(u_{1},u_{2})\Vert _{L_{I}^{2}L_{x}^{2}}\leq C_{2}\left( \log \frac{C%
}{C_{2}}+1\right) \Vert u_{1}\Vert _{U_{I,\Delta }^{2}L_{x}^{2}}\Vert
u_{2}\Vert _{V_{I,\Delta }^{2}L_{x}^{2}}  \label{E:UtoV03}
\end{equation}%
To present an application that we need below, first note that the following
bilinear Strichartz estimate holds.

\begin{lemma}[blinear Strichartz \protect\cite{B1}]
\label{L:bil-Str}For $x\in \mathbb{R}^{d}$,%
\begin{equation*}
\Vert P_{M_{1}}e^{it\Delta }\phi _{1}\,\overline{P_{M_{2}}e^{it\Delta }\phi
_{2}}\Vert _{L_{[0,1]}^{2}L_{x}^{2}}\lesssim \left( \frac{\min
(M_{1},M_{2})^{d-1}}{\max (M_{1},M_{2})}\right) ^{1/2}\Vert P_{M_{1}}\phi
_{1}\Vert _{L_{x}^{2}}\Vert P_{M_{2}}\phi _{2}\Vert _{L_{x}^{2}}
\end{equation*}%
which is, in $U$-$V$ notation,%
\begin{equation}
\Vert P_{M_{1}}\phi _{1}\,\overline{P_{M_{2}}\phi _{2}}\Vert
_{L_{[0,1]}^{2}L_{x}^{2}}\lesssim \left( \frac{\min (M_{1},M_{2})^{d-1}}{%
\max (M_{1},M_{2})}\right) ^{1/2}\Vert P_{M_{1}}\phi _{1}\Vert _{U_{I,\Delta
}^{2}L_{x}^{2}}\Vert P_{M_{2}}\phi _{2}\Vert _{U_{I,\Delta }^{2}L_{x}^{2}}
\label{E:bil-UV}
\end{equation}
\end{lemma}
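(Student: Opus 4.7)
The plan is to prove the Strichartz-type inequality first for free solutions $u_j(t,x) = e^{it\Delta}\phi_j(x)$ and then upgrade to the $U^2$ formulation by the standard atomic transfer principle. By symmetry I will assume $M_1 \le M_2$, so that the target factor becomes $(M_1^{d-1}/M_2)^{1/2}$. Writing $u_j = e^{it\Delta} P_{M_j}\phi_j$, the space-time Fourier transform of the product $u_1 \overline{u_2}$ is supported on the set
\[
\{(\tau,\xi) : \xi = \xi_1 - \xi_2, \ \tau = |\xi_1|^2 - |\xi_2|^2, \ |\xi_j| \sim M_j\},
\]
so by Plancherel in $(t,x)$ the left-hand side reduces to an $L^2_{\tau,\xi}$ norm of a convolution integrated against a delta on the paraboloid difference. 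This is the standard setup for the Bourgain bilinear estimate.

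Next, for each fixed output frequency $(\tau,\xi)$, I would use Cauchy–Schwarz in the $(\xi_1,\xi_2)$ integration, isolating the measure of the slice
\[
S_{\tau,\xi} = \{\xi_1 : |\xi_1|\sim M_1, \ |\xi_1-\xi|\sim M_2, \ |\xi_1|^2 - |\xi_1-\xi|^2 = \tau\}.
\]
The two equations $\xi_1 - (\xi_1-\xi) = \xi$ and $|\xi_1|^2 - |\xi_1-\xi|^2 = \tau$ together fix the component of $\xi_1$ along $\xi$, so $\xi_1$ is constrained to a $(d-1)$-dimensional slice within a shell of radius $\sim M_1$. This yields the bound $|S_{\tau,\xi}| \lesssim M_1^{d-1}/|\xi| \lesssim M_1^{d-1}/M_2$, since the frequency support forces $|\xi|\sim M_2$ when $M_1 \ll M_2$ (and when $M_1\sim M_2$ the factor reduces to $M_1^{d-2}$, which is the right size in any case). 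Combining this geometric counting with Plancherel and Cauchy–Schwarz in $\xi_1$ then gives the claimed bound in terms of $\|P_{M_1}\phi_1\|_{L^2}\|P_{M_2}\phi_2\|_{L^2}$. This geometric slice estimate is the crux of the argument; once it is in place the rest is bookkeeping.

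To pass to the $U^2_{I,\Delta}L^2_x$ version, I would invoke the atomic transfer principle built into the $U^2$ machinery of Koch–Tataru–Hadac–Herr–Koch cited in the excerpt. A $U^2_{\Delta}L^2_x$-atom is by definition a finite sum $\sum_k \mathbf{1}_{[t_k,t_{k+1})}(t) e^{it\Delta}\phi_k$ with $\sum_k \|\phi_k\|_{L^2}^2 \le 1$; since the bilinear estimate just proven is multilinear and applies to each $e^{it\Delta}\phi_k$, the triangle inequality and Cauchy–Schwarz over the time partitions yield the same bound for $U^2$-atoms, and hence for all $U^2$ functions by the atomic definition of the norm.

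The main obstacle is the geometric slice estimate in the counting step — getting the factor $M_1^{d-1}/M_2$ rather than an inferior $M_1^{d-1}$ requires carefully exploiting the transversality of the paraboloid to the plane $\xi_1 - \xi_2 = \xi$, which forces the surface measure of $S_{\tau,\xi}$ to pick up the inverse of $|\xi|$. Once that is handled, the $U^2$ upgrade is a routine invocation of the atomic framework already introduced in this section.
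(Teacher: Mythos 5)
The paper does not actually prove this lemma: it is quoted from Bourgain \cite{B1}, with the $U^2$ form following from the transference built into the Hadac--Herr--Koch framework, so your argument has to stand on its own. Its outline (space-time Plancherel, Cauchy--Schwarz on the resonance slices, then atomic transfer) is the standard route, but the way you dispose of the comparable-frequency case does not work as written. Your slice bound is $|S_{\tau,\xi}|\lesssim M_1^{d-1}/|\xi|$, and the factor $1/|\xi|$ converts into $1/\max(M_1,M_2)$ only when the frequencies are separated, since then $|\xi|=|\xi_1-\xi_2|\sim M_2$. When $M_1\sim M_2$ the output frequency $|\xi|$ can be arbitrarily small compared with $M_1$ (take $\xi_1\approx\xi_2$ on the same annulus), and then $M_1^{d-1}/|\xi|$ is far larger than the $M_1^{d-2}$ you claim; Cauchy--Schwarz against $\sup_{\tau,\xi}|S_{\tau,\xi}|$ then gives nothing useful in that regime. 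The stated constant $\min(M_1,M_2)^{(d-1)/2}\max(M_1,M_2)^{-1/2}$ is still correct there, but it requires a separate (also standard) argument, e.g.\ H\"older together with the frequency-localized $L^4_{t,x}$ Strichartz bound $\Vert e^{it\Delta}P_M\phi\Vert_{L^4_{t,x}}\lesssim M^{(d-2)/4}\Vert\phi\Vert_{L^2}$ --- which, for $d=3$, is exactly the $M_1^{1/4}M_2^{1/4}$ estimate the paper derives right after the lemma as the input fitting the template \eqref{E:UtoV01}. So either restrict the slice-counting proof to $M_1\ll M_2$ and treat comparable frequencies by the $L^4$ route, or supply that case explicitly; the parenthetical as written is the one step that would fail.

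A smaller point concerns the transfer to $U^2_{\Delta}$: ``triangle inequality and Cauchy--Schwarz over the time partitions'' is lossy. If $u_1=\sum_j\mathbf{1}_{I_j}(t)e^{it\Delta}\phi_j$ and $u_2=\sum_k\mathbf{1}_{J_k}(t)e^{it\Delta}\psi_k$ are atoms, the product splits over the common refinement $\{I_j\cap J_k\}$, and the key fact is that these time intervals are pairwise disjoint, so the squared $L^2_{t,x}$ norms of the pieces add exactly; applying the free bilinear estimate on each piece and using $\sum_j\Vert\phi_j\Vert_{L^2}^2\le 1$, $\sum_k\Vert\psi_k\Vert_{L^2}^2\le 1$ then gives a bound independent of the number of steps. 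A plain triangle inequality over the pieces, with only the $\ell^2$ normalization of the atoms, would produce a factor growing with the number of subintervals. This is easily repaired, but the disjoint-support orthogonality should be stated as the mechanism.
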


Lemma \ref{L:bil-Str} fits the template \eqref{E:UtoV02} with $%
T(u_{1},u_{2})=u_{1}u_{2}$, $u_{1}=P_{M_{1}}\phi _{1}$, $u_{2}=P_{M_{2}}\phi
_{2}$, and $C_{2}=\frac{\min (M_{1},M_{2})}{\max (M_{1},M_{2})^{1/2}}$.
However, by H\"{o}lder, Sobolev and Strichartz estimates, we have 
\begin{eqnarray*}
\Vert P_{M_{1}}\phi _{1}P_{M_{2}}\phi _{2}\Vert _{L_{I}^{2}L_{x}^{2}}
&\lesssim &\Vert P_{M_{1}}\phi _{1}\Vert _{L_{I}^{4}L_{x}^{4}}\Vert
P_{M_{1}}\phi _{2}\Vert _{L_{I}^{4}L_{x}^{4}} \\
&\lesssim &M_{1}^{1/4}M_{2}^{1/4}\Vert \phi _{1}\Vert _{U_{I,\Delta
}^{4}L_{x}^{2}}\Vert \phi _{2}\Vert _{U_{I,\Delta }^{4}L_{x}^{2}}
\end{eqnarray*}%
which fits the template of \eqref{E:UtoV01} with $q=4$ and $%
C=M_{1}^{1/4}M_{2}^{1/4}$. The conclusion \eqref{E:UtoV03} reads 
\begin{eqnarray}
&&\Vert P_{M_{1}}\phi _{1}P_{M_{2}}\phi _{2}\Vert _{L_{I}^{2}L_{x}^{2}}
\label{E:bil-UV02} \\
&\lesssim &\frac{\min (M_{1},M_{2})}{\max (M_{1},M_{2})^{1/2}}\left( 1+\log 
\frac{\max (M_{1},M_{2})}{\min (M_{1},M_{2})}\right)  \notag \\
&&\Vert P_{M_{1}}\phi _{1}\Vert _{U_{I,\Delta }^{2}L_{x}^{2}}\Vert
P_{M_{2}}\phi _{2}\Vert _{V_{I,\Delta }^{2}L_{x}^{2}}  \notag
\end{eqnarray}%
where in fact the position of the $U_{\Delta }^{2}$ and $V_{\Delta }^{2}$
norms on the right can be switched. The result is that we have been able to
take \eqref{E:bil-UV} and upgrade one of the norms on the right side to $%
V^{2}$ at the expense a logarithmic loss.


After this background, we now proceed with the proof of Theorem \ref%
{Thm:biScattering}. Recall $\phi $ and $\phi _{N}$ are the solutions to (\ref%
{NLS:opening cubic}) and (\ref{NLS:Hartree}) and let 
\begin{equation*}
\tilde{\phi}=\phi _{N}-\phi .
\end{equation*}

It follows from energy conservation and classical well-posedness theory in
the Strichartz spaces that (\ref{NLS:opening cubic}) and (\ref{NLS:Hartree})
in the $\mathbb{R}^{3}$ defocusing case satisfy the global in time bounds 
\begin{equation*}
\Vert \phi \Vert _{L_{t}^{\infty }H_{x}^{1}}\leq C_{1}\,,\qquad \Vert \phi
_{N}\Vert _{L_{t}^{\infty }H_{x}^{1}}\leq C_{1}
\end{equation*}%
where the constant $C_{1}$ depends on the size of the initial data in $H^{1}$%
. The following theorem on scattering was obtained for NLS by Ginibre \&
Velo \cite{GV} using a Morawetz estimate of Lin \& Strauss \cite{LS}. An
alternate proof using an interaction Morawetz was given by Colliander, Keel,
Staffilani, Takaoka, \& Tao \cite{CKSTT-sub}. A version in the focusing
setting by Duyckaerts, Holmer, Roudenko \cite{DHR} was obtained using the
concentration compactness and virial rigidity method of Kenig \& Merle \cite%
{KMerle}. The corresponding Hartree result was obtained by Miao, Xu, \& Zhao 
\cite{MXZ}.

\begin{theorem}[scattering]
\label{T:scat} Defocusing cubic NLS (\ref{NLS:opening cubic}) and defocusing
cubic H-NLS (\ref{NLS:Hartree}) in $\mathbb{R}^{3}$ both scatter in $H^{1}$.
In particular, for $H^{1}$ data, we have the global-in-time bounds 
\begin{equation*}
\Vert \langle \nabla \rangle \phi \Vert _{L_{t}^{2}L_{x}^{6}}\lesssim
1\,,\qquad \Vert \langle \nabla \rangle \phi _{N}\Vert
_{L_{t}^{2}L_{x}^{6}}\lesssim 1
\end{equation*}
\end{theorem}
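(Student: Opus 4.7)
The plan is to invoke the classical scattering theory for defocusing cubic equations in $\mathbb{R}^3$, which is mass-supercritical but energy-subcritical (in fact $\dot H^{1/2}$-critical), and therefore globally well-posed and scattering in the energy space. The argument rests on three ingredients: conservation of mass and energy, a Morawetz-type a priori spacetime bound, and a Strichartz continuity/bootstrap argument. I would carry them out in that order for NLS, then adapt to H-NLS, then deduce scattering.

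For \eqref{NLS:opening cubic}, conservation of
$$E[\phi] = \tfrac12\|\nabla\phi\|_{L^2}^2 + \tfrac{b_0}{4}\|\phi\|_{L^4}^4, \qquad M[\phi] = \|\phi\|_{L^2}^2$$
gives $\|\phi(t)\|_{H^1}\lesssim 1$ uniformly in $t$. The Lin--Strauss Morawetz estimate (or, more robustly, the Colliander--Keel--Staffilani--Takaoka--Tao interaction Morawetz) produces the global spacetime bound
$$\|\phi\|_{L^4_t L^4_x(\mathbb{R}\times\mathbb{R}^3)} \lesssim_{M[\phi],E[\phi]} 1.$$
I would then partition $\mathbb{R}$ into finitely many intervals $I_j$ on each of which $\|\phi\|_{L^4_{t,x}(I_j)}$ is small, and on each $I_j$ run the standard Strichartz continuity argument simultaneously on $\phi$ and $\nabla\phi$. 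This gives $\|\langle\nabla\rangle\phi\|_{L^2_{I_j}L^6_x}\lesssim 1$, and summing in $j$ yields the claimed global bound.

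For the Hartree equation \eqref{NLS:Hartree}, the identical scheme applies, with $|\phi|^2$ replaced by the nonlocal density $V_N\ast|\phi_N|^2$. Since $V_N\geq 0$, the Morawetz commutator with the weight $a(x)=|x|$ still produces a nonnegative integrand (this is the content of the Miao--Xu--Zhao Morawetz identity for Hartree), and all implicit constants depend only on $\|V\|_{L^1}$ and $\|\phi_0\|_{H^1}$, uniformly in $N$. Granted the spacetime bound on $\langle\nabla\rangle\phi_N$, scattering follows from a short Cauchy argument: Duhamel's formula reads
$$e^{-it\Delta}\phi(t) - e^{-is\Delta}\phi(s) = -ib_0\int_s^t e^{-it'\Delta}\bigl(|\phi|^2\phi\bigr)(t')\,dt',$$
and dual Strichartz combined with H\"older (using $L^{10}_{t,x}$ and $L^2_tL^6_x$ on the nonlinearity, bounded by the global bounds above) shows the right side tends to $0$ in $H^1_x$ as $s,t\to\infty$. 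Thus $\{e^{-it\Delta}\phi(t)\}_{t\geq 0}$ is Cauchy in $H^1$ and converges to some $\phi_+$; the same applies to $\phi_N$.

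The main obstacle is the Morawetz bound itself. For NLS, the Lin--Strauss identity requires a careful commutator computation with $|x|$ and control of $\|\phi\|_{\dot H^{1/2}}$ by interpolation between $L^2$ and $\dot H^1$; the interaction Morawetz is cleaner but proceeds on the tensorized space $\mathbb{R}^3\times\mathbb{R}^3$. For the Hartree case, the derivation is more delicate because one must commute the weight through the convolution $V_N\ast$; the uniform-in-$N$ character of the resulting bound is what ultimately allows the $N\to\infty$ and $t\to\infty$ scattering behaviors of Theorem~\ref{Thm:biScattering}(i) to coexist with the quantitative comparison estimate \eqref{nls difference:almost optimal} obtained in part (ii).
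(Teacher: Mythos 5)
The paper does not actually prove Theorem \ref{T:scat}: it quotes it from the literature (Ginibre--Velo \cite{GV} via the Lin--Strauss Morawetz estimate \cite{LS}, the interaction Morawetz of \cite{CKSTT-sub}, and Miao--Xu--Zhao \cite{MXZ} for the Hartree case). Your outline reconstructs exactly that classical route --- conservation laws, a global $L^4_{t,x}$ spacetime bound from an (interaction) Morawetz estimate, a partition into finitely many intervals of small norm, a Strichartz bootstrap giving $\Vert \langle\nabla\rangle\phi\Vert_{L^2_tL^6_x}\lesssim 1$ on each piece, and a Cauchy-in-$H^1$ Duhamel argument for the wave operators --- so in approach you agree with what the paper invokes, and you rightly stress the point the paper leaves implicit: the constants, and hence the number $J$ of small intervals, must be uniform in $N$ for the later architecture of Theorem \ref{Thm:biScattering} to work.

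One step of your justification is, however, not correct as stated: positivity of the nonlinear term in the Morawetz (or interaction Morawetz) identity for \eqref{NLS:Hartree} does not follow from $V_N\geq 0$ alone. After symmetrization that term reads
\begin{equation*}
-\frac12\int\int \bigl(\nabla a(x)-\nabla a(y)\bigr)\cdot\nabla V_N(x-y)\,|\phi_N(x)|^2\,|\phi_N(y)|^2\,dx\,dy
\end{equation*}
(with the analogous three-point expression in the interaction version), and its sign relies on the kernel being radially non-increasing --- as for $|x|^{-\gamma}$ in \cite{MXZ} --- so that $\bigl(\tfrac{x}{|x|}-\tfrac{y}{|y|}\bigr)\cdot\nabla V_N(x-y)\leq 0$ pointwise; for a general smooth even $V\geq 0$ as assumed in the paper this integrand has no definite sign, so one needs either an additional monotonicity-type hypothesis on $V$, a separate estimate of this term, or a different (e.g.\ perturbative in $N$) argument to secure the uniform-in-$N$ spacetime bound for $\phi_N$. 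A smaller inaccuracy: Lin--Strauss by itself yields the weighted bound on $\int\int |x|^{-1}|\phi|^4$, not $\Vert\phi\Vert_{L^4_{t,x}}$; your fallback to the interaction Morawetz is the right fix there. The remaining ingredients (small-$L^4_{t,x}$ partition, Strichartz continuity argument on each interval, Cauchy criterion for scattering) are standard and correct.
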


Bounds on other Strichartz norms can be obtained by interpolation. As a
corollary, we have that there exists a \emph{finite} partition of the time
interval $[0,+\infty )$ 
\begin{equation*}
0=t_{0}<t_{1}<\cdots <t_{J}=\infty
\end{equation*}%
such that on each subinterval $I=[t_{j},t_{j+1})$ for $0\leq j\leq J$, there
holds 
\begin{equation}
\Vert \langle \nabla \rangle \phi \Vert _{L_{I}^{2}L_{x}^{6}}\leq \delta
\,,\qquad \Vert \langle \nabla \rangle \phi _{N}\Vert
_{L_{I}^{2}L_{x}^{6}}\leq \delta  \label{E:Strichartz-small}
\end{equation}

\begin{corollary}
\label{C:U2control} If $\delta>0$ is chosen small\footnote{%
The proof shows that $\delta\lesssim \langle C_1 \rangle^{-1/3}$ suffices}
in terms of $C_1$, then for each interval $I$ on which %
\eqref{E:Strichartz-small} holds, we have 
\begin{equation}  \label{E:U2control}
\Vert \phi \Vert _{U^2_{I,\Delta}H_x^1} \leq 2C_{1}\,,\qquad \Vert \phi
_{N}\Vert _{U^2_{I,\Delta}H_x^1} \leq 2C_{1}
\end{equation}
\end{corollary}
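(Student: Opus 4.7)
The plan is to combine the Duhamel formula, the $U^2$--$V^2$ duality \eqref{E:UV-dual}, and the Strichartz smallness \eqref{E:Strichartz-small} to close a direct estimate on each subinterval $I=[t_j,t_{j+1})$; the arguments for $\phi$ and for $\phi_N$ are parallel, so I would describe only $\phi$ in detail.

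First I would write Duhamel's formula starting from $\phi(t_j)$ and take $\|\cdot\|_{U^2_{I,\Delta}H_x^1}$ on both sides. The linear piece is immediately bounded by $\|\phi(t_j)\|_{H_x^1}\leq C_1$, from the very definition of the $U^p$-norm of a linear evolution. For the Duhamel piece, after applying $\langle\nabla\rangle$, the duality identity \eqref{E:UV-dual} reduces the $U^2_{I,\Delta}L_x^2$ norm to the supremum of the pairing $\int_I\langle\langle\nabla\rangle(|\phi|^2\phi),g\rangle_{L_x^2}\,dt$ over test functions $g$ with $\|g\|_{V^2_{I,\Delta}L_x^2}\leq 1$. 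Hölder in $t$ combined with the embedding $V^2_{I,\Delta}L_x^2\hookrightarrow L_I^\infty L_x^2$ from \eqref{E:UV-basic-embeddings} reduces matters to controlling $\|\langle\nabla\rangle(|\phi|^2\phi)\|_{L_I^1 L_x^2}$.

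Next I would handle the nonlinearity by fractional Leibniz together with Hölder in $x$ using $\tfrac12=\tfrac16+\tfrac16+\tfrac16$,
$$\|\langle\nabla\rangle(|\phi|^2\phi)\|_{L_x^2}\lesssim \|\phi\|_{L_x^6}^2\|\langle\nabla\rangle\phi\|_{L_x^6},$$
followed by Hölder in $t$ with $1=\tfrac1\infty+\tfrac12+\tfrac12$. Using the Sobolev bound $\|\phi\|_{L_I^\infty L_x^6}\lesssim C_1$ together with the smallness \eqref{E:Strichartz-small} (which in particular yields $\|\phi\|_{L_I^2 L_x^6}\lesssim\delta$, since $\|\phi\|_{L_x^6}\lesssim\|\langle\nabla\rangle\phi\|_{L_x^6}$), this produces
$$\|\langle\nabla\rangle(|\phi|^2\phi)\|_{L_I^1 L_x^2}\lesssim C_1\delta^2.$$
Choosing $\delta$ small enough, in terms of $C_1$, so that $CC_1\delta^2\leq C_1$ then closes the bound $\|\phi\|_{U^2_{I,\Delta}H_x^1}\leq 2C_1$. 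The Hartree case is identical after using Young's convolution inequality $L^1\ast L^p\to L^p$ (with $\|V_N\|_{L^1}=\|V\|_{L^1}$) and distributing $\nabla$ either directly onto the external factor $\phi_N$, or onto $|\phi_N|^2$ inside the convolution via $\nabla|\phi_N|^2=2\operatorname{Re}(\bar\phi_N\nabla\phi_N)$; either choice produces the same structural pointwise-in-time bound $\|\langle\nabla\rangle((V_N\ast|\phi_N|^2)\phi_N)\|_{L_x^2}\lesssim\|\phi_N\|_{L_x^6}^2\|\langle\nabla\rangle\phi_N\|_{L_x^6}$, so the rest of the argument carries over verbatim.

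I do not anticipate a deep obstacle; the whole proof is a careful Strichartz/$U$--$V$ bookkeeping built on \eqref{E:UV-dual}, \eqref{E:UV-basic-embeddings} and \eqref{E:Str-U}. The only mildly delicate points I expect are (i) the placement of $\nabla$ in the Hartree nonlinearity so that Young's inequality applies cleanly, and (ii) being sharp enough in the Hölder split to reach the precise threshold $\delta\lesssim\langle C_1\rangle^{-1/3}$ of the footnote, which would require avoiding the crude Sobolev step $\|\phi\|_{L_I^\infty L_x^6}\lesssim C_1$ in favor of a genuine Strichartz distribution of the three factors of $\phi$.
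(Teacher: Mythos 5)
Your proposal is correct, and it differs from the paper's proof at exactly one step: how the dual test function $g$ is handled after applying \eqref{E:UV-dual}. The paper keeps $g$ inside the space--time pairing and measures it in the Strichartz norm $L_I^4L_x^3$ via $V^2\hookrightarrow U^4$ together with \eqref{E:Str-U}, and correspondingly splits the three factors of $\phi$ as $\Vert\langle\nabla\rangle\phi\Vert_{L_I^2L_x^6}\,\Vert\phi\Vert_{L_I^4L_x^3}\,\Vert\phi\Vert_{L_I^\infty L_x^6}$, with the interpolation $\Vert\phi\Vert_{L_I^4L_x^3}\leq\delta^{1/2}C_1^{1/2}$; this yields a nonlinear contribution of size $\delta^{3/2}C_1^{3/2}$ and hence the footnote's threshold $\delta\lesssim\langle C_1\rangle^{-1/3}$. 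You instead place $g$ in $L_I^\infty L_x^2$ (via the $V^2\hookrightarrow L^\infty$ end of \eqref{E:UV-basic-embeddings}), which reduces everything to the crude bound $\Vert\langle\nabla\rangle(|\phi|^2\phi)\Vert_{L_I^1L_x^2}\lesssim\Vert\phi\Vert_{L_I^\infty L_x^6}\Vert\phi\Vert_{L_I^2L_x^6}\Vert\langle\nabla\rangle\phi\Vert_{L_I^2L_x^6}\lesssim C_1\delta^2$; this closes just as well, and in fact requires only $\delta$ smaller than an absolute constant, a weaker hypothesis than the footnote's. (So your closing worry about needing a sharper H\"older split to ``reach'' the $\langle C_1\rangle^{-1/3}$ threshold is backwards: that threshold is what the paper's particular splitting needs, and your route already does better; the crude Sobolev step $\Vert\phi\Vert_{L_I^\infty L_x^6}\lesssim C_1$ is harmless here.) Your treatment of the Hartree case by Young's inequality and distributing $\nabla$ onto either factor matches the paper's (unwritten) handling, and the linear-evolution bound $\Vert e^{it\Delta}\phi\Vert_{U^2}\leq\Vert\phi\Vert_{L^2}$ is exactly the paper's starting point, so the rest is the same.
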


\begin{proof}
The argument for $\phi $ (NLS) and $\phi _{N}$ (HNLS) is similar, so we will
just write it for $\phi $. On $I=[t_{\ast },t^{\ast }]$, we have 
\begin{equation*}
\phi (t)=e^{i(t-t_{\ast })\Delta }\phi (t_{\ast })+\int_{t_{\ast
}}^{t}e^{i(t-t^{\prime })\Delta }\left\vert \phi (t^{\prime })\right\vert
^{2}\phi (t^{\prime })\,dt^{\prime }
\end{equation*}%
By \eqref{E:UV-dual}, 
\begin{equation}
\Vert \phi \Vert _{U_{I,\Delta }^{2}H_{x}^{1}}\lesssim \Vert \phi (t_{\ast
})\Vert _{H_{x}^{1}}+\sup_{\substack{ g\in V_{I,\Delta }^{2}L_{x}^{2}  \\ %
\Vert g\Vert _{V_{I,\Delta }^{2}L_{x}^{2}}\leq 1}}\left\vert
\int_{I}\int_{x}\langle \nabla \rangle (|\phi |^{2}\phi
)\;g\,dx\,dt\right\vert  \label{E:UV-stuff-01}
\end{equation}%
For a particular $g$, we estimate as 
\begin{equation*}
\left\vert \int_{I}\int_{x}\langle \nabla \rangle (|\phi |^{2}\phi
)\;g\,dx\,dt\right\vert \lesssim \Vert \langle \nabla \rangle \phi \Vert
_{L_{I}^{2}L_{x}^{6}}\Vert \phi \Vert _{L_{I}^{4}L_{x}^{3}}\Vert \phi \Vert
_{L_{I}^{\infty }L_{x}^{6}}\Vert g\Vert _{L_{I}^{4}L_{x}^{3}}
\end{equation*}%
By H\"{o}lder interpolation 
\begin{equation*}
\Vert \phi \Vert _{L_{I}^{4}L_{x}^{3}}\leq \Vert \phi \Vert
_{L_{I}^{2}L_{x}^{6}}^{1/2}\Vert \phi \Vert _{L_{x}^{\infty
}L_{x}^{2}}^{1/2}\leq \delta ^{1/2}C_{1}^{1/2}
\end{equation*}%
By Sobolev embedding, $\Vert \phi \Vert _{L_{I}^{\infty }L_{x}^{6}}\lesssim
C_{1}$. And by \eqref{E:Str-U} and \eqref{E:UV-basic-embeddings}, 
\begin{equation*}
\Vert g\Vert _{L_{I}^{4}L_{x}^{3}}\lesssim \Vert g\Vert _{U_{I,\Delta
}^{4}L_{x}^{2}}\lesssim \Vert g\Vert _{V_{I,\Delta }^{2}L_{x}^{2}}
\end{equation*}%
Inserting these above, we obtain 
\begin{equation*}
\left\vert \int_{I}\int_{x}\langle \nabla \rangle (|\phi |^{2}\phi
)\;g\,dx\,dt\right\vert \lesssim \delta ^{3/2}C_{1}^{3/2}\Vert g\Vert
_{V_{I,\Delta }^{2}L_{x}^{2}}
\end{equation*}%
By \eqref{E:UV-stuff-01}, and the fact that $\Vert \phi (t_{\ast })\Vert
_{H_{x}^{1}}\leq C_{1}$, we obtain the result.
\end{proof}

Now we will show that on each time interval $I$ in the finite partition of $%
0\leq t <+\infty$, we obtain a bound on $\tilde \phi$ in terms of the
initial difference for that subinterval.

\begin{proposition}
\label{P:diff-control} Suppose that on a time interval $I$ the solutions to (%
\ref{NLS:opening cubic}) and (\ref{NLS:Hartree}) satisfy 
\begin{equation*}
\Vert \phi \Vert _{U_{I,\Delta }^{2}H_{x}^{1}}\leq 2C_{1}\,,\qquad \Vert
\phi _{N}\Vert _{U_{I,\Delta }^{2}H_{x}^{1}}\leq 2C_{1}
\end{equation*}%
for some constant $C_{1}$ and 
\begin{equation*}
\Vert \langle \nabla \rangle \phi \Vert _{L_{I}^{2}L_{x}^{6}}\leq \delta
\,,\qquad \Vert \langle \nabla \rangle \phi _{N}\Vert
_{L_{I}^{2}L_{x}^{6}}\leq \delta
\end{equation*}%
Consider the difference 
\begin{equation*}
\tilde{\phi}(t)=\phi _{N}(t)-\phi (t)
\end{equation*}%
with initial condition $\tilde{\phi}_{0}=(\phi _{N})_{0}-\phi _{0}$ for the
time interval $I$.

Provided $\delta >0$ is chosen small, $\Vert \tilde{\phi}_{0}\Vert
_{H_{x}^{1}}$ is sufficiently small, and $N$ is sufficiently large (all of
these thresholds are expressed in terms of $C_{1}$ only), then we have 
\begin{equation}  \label{E:diff-control}
\Vert \tilde{\phi}\Vert _{U_{I,\Delta }^{2}H_{x}^{1}}\lesssim \Vert \tilde{%
\phi}_{0}\Vert _{H_{x}^{1}}+(\log N)^{7}N^{-\beta }C_{1}^{3}
\end{equation}
\end{proposition}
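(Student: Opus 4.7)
The plan is to Duhamel expand $\tilde\phi = \phi_N - \phi$ and estimate by $U^2$--$V^2$ duality, exactly as in the proof of Corollary \ref{C:U2control}. Since
\[
i\partial_t \tilde\phi + \Delta \tilde\phi = (V_N * |\phi_N|^2)\phi_N - b_0|\phi|^2\phi, \qquad b_0 = \int V,
\]
I split the right-hand side as $\mathcal{N}_{\mathrm{lin}} + E$, where the part that is at least linear in $\tilde\phi$ is
\[
\mathcal{N}_{\mathrm{lin}} = (V_N * |\phi_N|^2)\tilde\phi + \bigl(V_N * (\phi_N\overline{\tilde\phi} + \tilde\phi\bar\phi)\bigr)\phi,
\]
and the error is
\[
E = \bigl((V_N - b_0 \delta) * |\phi|^2\bigr)\phi.
\]
By \eqref{E:UV-dual} we reduce to bounding
\[
\sup_{\|g\|_{V^2_{I,\Delta}L_x^2}\leq 1}\; \biggl|\int_I\int_x \langle\nabla\rangle(\mathcal{N}_{\mathrm{lin}}+E)\,\overline g\,dx\,dt\biggr|.
\]

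For the $\mathcal{N}_{\mathrm{lin}}$ contribution I use Young's inequality $\|V_N * h\|_{L^r_x} \leq \|V\|_{L^1}\|h\|_{L^r_x}$ to effectively replace $V_N$ by a constant, then run the H\"older chain from Corollary \ref{C:U2control}: put one derivative on an endpoint factor in $L_I^2 L_x^6$ (which is $\leq \delta$ for $\phi$ or $\phi_N$), one factor into $L^\infty_I L^6_x$ via Sobolev (bounded by $C_1$), and $g$ into $L^4_I L^3_x$ via \eqref{E:Str-U} and \eqref{E:UV-basic-embeddings}. This yields a contribution of size $\delta C_1^2\,\|\tilde\phi\|_{U^2_{I,\Delta}H^1_x}$ plus pieces involving at most two $\tilde\phi$ factors, which a continuity argument handles using the smallness of $\|\tilde\phi_0\|_{H^1_x}$. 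Choosing $\delta$ small in terms of $C_1$ allows absorption of $\|\tilde\phi\|_{U^2_{I,\Delta}H^1_x}$ into the left-hand side.

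The heart of the matter is the bound on $E$, where the $(\log N)^7 N^{-\beta}$ enters. On the Fourier side,
\[
\widehat{(V_N - b_0\delta)*f}(\xi) = \bigl(\hat V(\xi/N^\beta) - \hat V(0)\bigr)\hat f(\xi),
\]
and the smoothness of $V$ gives $|\hat V(\xi/N^\beta) - \hat V(0)| \lesssim \min(1, |\xi|/N^\beta)$. I run a Littlewood--Paley decomposition on each of the three $\phi$ factors in $|\phi|^2\phi$ and on the multiplier symbol, then estimate each trilinear piece paired against $g$ by the bilinear Strichartz bound \eqref{E:bil-UV02}: pair the highest with a lower frequency to gain $(M_{\min}/M_{\max})^{1/2}$ at a logarithmic cost, and use the $V^2$--$U^2$ upgrade to place $g$ in $V^2$ while keeping $\phi,\phi_N\in U^2_{I,\Delta}H^1_x$ with constant $\leq 2C_1$. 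The multiplier contributes the gain $N^{-\beta}$ (balancing $|\xi|/N^\beta$ against the derivative cost carried by $\langle\nabla\rangle$), while summation over dyadic scales via Cauchy--Schwarz is what produces the accumulated $(\log N)^7$.

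The main obstacle is book-keeping in the critical regime: cubic NLS on $\mathbb{R}^3$ is only $H^{1/2}$-critical, so the error must be absorbed at the $H^1$ level with no polynomial loss in frequency, and every derivative and H\"older exponent is saturated. The $V^2$--$U^2$ upgrade \eqref{E:bil-UV02} is essential for retaining the bilinear Strichartz gain while placing $g$ only in $V^2$. The seven logarithms arise cumulatively from Cauchy--Schwarz over the four Littlewood--Paley frequency sums together with the logarithmic losses in \eqref{E:bil-UV02}; as the paper itself remarks, the exponent $7$ is soft and could be reduced by a more delicate argument.
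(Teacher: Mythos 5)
Your proposal is correct and follows essentially the same route as the paper: the same splitting into the $W_{N}$-forcing term plus terms containing $\tilde{\phi}$ (absorbed via the smallness of $\delta$ and a continuity argument), the same $U^{2}$--$V^{2}$ duality, and the same treatment of the forcing term via the multiplier bound $|\hat{V}(\xi N^{-\beta })-\hat{V}(0)|\lesssim \min (1,N^{-\beta }|\xi |)$, Littlewood--Paley decomposition, bilinear Strichartz with the $V^{2}$ upgrade \eqref{E:bil-UV02} for $g$, and logarithmic weights to sum the dyadic pieces. The one step you gloss over is the bookkeeping the paper packages in Lemma \ref{L:N-for-deriv-bil}: when the derivative falls inside the convolution, the multiplier's one-derivative trade must be split as $|\nabla |^{1/2}\otimes |\nabla |^{1/2}$ across the two pairings $\bar{\phi}\nabla \phi $ and $\phi g$ (each half rescued by a bilinear estimate), and this is distinct from the $\langle \nabla \rangle $ coming from the $H^{1}$ norm, which is paid by the $H^{1}$ regularity of the $\phi $ factor it lands on; the term with the derivative on the outer $\phi $ is then handled more simply by Lemma \ref{L:N-for-deriv} with H\"{o}lder, since treating it bilinearly against $g$ (which carries no derivative weight) would leave a divergent frequency sum.
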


\begin{remark}
This result just fails by a logarithm to obtain the optimal $N^{-\beta }$
rate at $1$ derivative of regularity. With more delicate arguments, we can
indeed reduce the power on the $\log N$ factor, although we do not see a way
to completely eliminate the $\log N$ factor.
\end{remark}

\begin{proof}
Plug in $\phi _{N}=\phi +\tilde{\phi}$ into (\ref{NLS:Hartree}), and using
that $\phi $ solves (\ref{NLS:opening cubic}) to simplify, we obtain that $%
\tilde{\phi}$ solves 
\begin{equation*}
0=i\partial _{t}\tilde{\phi}+\Delta \tilde{\phi}-(V_{N}\ast |\phi +\tilde{%
\phi}|^{2})(\phi +\tilde{\phi})+|\phi |^{2}\phi
\end{equation*}%
Adopting the shorthand, 
\begin{equation*}
W_{N}(x)=N^{3\beta }V(N^{\beta }x)-b_{0}\delta (x)
\end{equation*}%
we expand the nonlinearity 
\begin{align*}
0& =i\partial _{t}\tilde{\phi}+\Delta \tilde{\phi} & & \\
& \qquad -(W_{N}\ast |\phi |^{2})\phi & & \leftarrow \text{forcing} \\
& \qquad -2[V_{N}\ast \Re (\bar{\phi}\tilde{\phi})]\phi -(V_{N}\ast |\phi
|^{2})\tilde{\phi} & & \leftarrow \text{linear in $\tilde{\phi}$} \\
& \qquad -2[V_{N}\ast \Re (\bar{\phi}\tilde{\phi})]\tilde{\phi}-V_{N}\ast |%
\tilde{\phi}|^{2})\phi & & \leftarrow \text{quadratic in $\tilde{\phi}$} \\
& \qquad -(V_{N}\ast |\tilde{\phi}|^{2})\tilde{\phi} & & \leftarrow \text{%
cubic in $\tilde{\phi}$}
\end{align*}%
By \eqref{E:UV-dual},%
\begin{eqnarray}
&&\Vert \tilde{\phi}\Vert _{U_{I,\Delta }^{2}H_{x}^{1}}  \notag \\
&\leq &\Vert \tilde{\phi}_{0}\Vert _{H_{x}^{1}}  \notag \\
&&+\sup_{\substack{ g\in V_{I,\Delta }^{2}L_{x}^{2}  \\ \Vert g\Vert
_{V_{I,\Delta }^{2}L_{x}^{2}}\leq 1}}\int_{I}\int \langle \nabla \rangle
\lbrack (W_{N}\ast |\phi |^{2})\phi ]\,g\,dx\,dt  \label{eqn:forcing} \\
&&+K\Vert \langle \nabla \rangle \left( \lbrack V_{N}\ast \Re (\bar{\phi}%
\tilde{\phi})]\phi \right) \Vert _{L_{I}^{1}L_{x}^{2}}+K\Vert \langle \nabla
\rangle \lbrack (V_{N}\ast |\phi |^{2})\tilde{\phi}]\Vert
_{L_{I}^{1}L_{x}^{2}}  \label{eqn:linear} \\
&&+K\Vert \langle \nabla \rangle \left( \lbrack V_{N}\ast \Re (\bar{\phi}%
\tilde{\phi})]\tilde{\phi}\right) \Vert _{L_{I}^{1}L_{x}^{2}}+K\Vert \langle
\nabla \rangle \lbrack (V_{N}\ast |\tilde{\phi}|^{2})\phi ]\Vert
_{L_{I}^{1}L_{x}^{2}}  \label{eqn:quadratic} \\
&&+K\Vert \langle \nabla \rangle \lbrack (V_{N}\ast |\tilde{\phi}|^{2})%
\tilde{\phi}]\Vert _{L_{I}^{1}L_{x}^{2}}  \label{eqn:cubic}
\end{eqnarray}%
in which (\ref{eqn:forcing}) corresponds to the forcing, (\ref{eqn:linear})
corresponds to the terms linear in $\tilde{\phi}$, (\ref{eqn:quadratic})
corresponds to the terms quadratic in $\tilde{\phi}$, and (\ref{eqn:cubic})
corresponds to the terms cubic in $\tilde{\phi}$, for some absolute constant 
$K>0$. The linear, quadratic, and cubic terms are estimated in a standard
way (the $V_{N}\ast $ operator is treated the same of a delta convolution (a
product pairing)), yielding a bound by 
\begin{equation}
\begin{aligned} \| \tilde \phi \|_{U^2_{I,\Delta}H_x^1} & \leq \| \tilde
\phi_0\|_{H_x^1} + Q_N(\phi) + K\|\langle \nabla \rangle \phi\|_{L_I^2
L_x^6} \| \phi \|_{L_I^\infty H_x^1} \| \langle \nabla \rangle \tilde \phi
\|_{L_I^2 L_x^6} \\ & \qquad + K\|\langle \nabla \rangle \phi\|_{L_I^2
L_x^6} \| \tilde \phi \|_{L_I^\infty H_x^1} \| \langle \nabla \rangle \tilde
\phi \|_{L_I^2 L_x^6} \\ & \qquad + K\|\langle \nabla \rangle \tilde
\phi\|_{L_I^2 L_x^6} \| \tilde \phi \|_{L_I^\infty H_x^1} \| \langle \nabla
\rangle \tilde \phi \|_{L_I^2 L_x^6} \\ &\leq \|\tilde \phi_0\|_{H_x^1} +
Q_{N,I}(\phi) +R(\tilde\phi) \|\tilde \phi\|_{U^2_{I,\Delta}H_x^1}
\end{aligned}  \label{E:UV-stuff-02}
\end{equation}%
where 
\begin{equation*}
Q_{N,I}(\phi )\overset{\mathrm{def}}{=}\sup_{\substack{ g\in V_{I,\Delta
}^{2}L_{x}^{2}  \\ \Vert g\Vert _{V_{I,\Delta }^{2}L_{x}^{2}}\leq 1}}%
\int_{I}\int \langle \nabla \rangle \lbrack (W_{N}\ast |\phi |^{2})\phi
]\,g\,dx\,dt
\end{equation*}%
\begin{equation*}
R_{I}(\tilde{\phi})\overset{\mathrm{def}}{=}KC_{1}\delta +K\delta \Vert 
\tilde{\phi}\Vert _{U_{I,\Delta }^{2}H_{x}^{1}}+K\Vert \tilde{\phi}\Vert
_{U_{I,\Delta }^{2}H_{x}^{1}}^{2}
\end{equation*}%
In fact, \eqref{E:UV-stuff-02} holds for any $I^{\prime }\subset I$. Let $%
I^{\prime }\subset I$ be the maximal subinterval on which 
\begin{equation}
\Vert \tilde{\phi}\Vert _{U_{I^{\prime },\Delta }^{2}H_{x}^{1}}\leq 4(\Vert 
\tilde{\phi}_{0}\Vert _{H_{x}^{1}}+Q_{N,I}(\phi ))  \label{E:UV-stuff-04}
\end{equation}%
(notice that it is $I^{\prime }$ on the left in $U_{I^{\prime },\Delta
}^{2}H_{x}^{1}$ but $I$ in the right in $Q_{N,I}(\phi )$) Then, provided $%
3KC_{1}\delta \leq \frac{1}{2}$, we have 
\begin{equation}
R_{I^{\prime }}(\tilde{\phi})\leq KC_{1}\delta +4K\delta (\Vert \tilde{\phi}%
_{0}\Vert _{H_{x}^{1}}+Q_{N,I}(\phi ))+16K(\Vert \phi _{0}\Vert
_{H_{x}^{1}}+Q_{N,I}(\phi ))^{2}  \label{E:UV-stuff-03}
\end{equation}%
by plugging (\ref{E:UV-stuff-04}) into $R_{I^{\prime }}(\tilde{\phi})$.
Provided $N$ is chosen sufficiently large in terms of $C_{1}$, the estimate %
\eqref{E:QNI-bound} below for $Q_{N,I}(\phi )$ will in particular imply 
\begin{equation*}
4(\Vert \tilde{\phi}_{0}\Vert _{H_{x}^{1}}+Q_{N,I}(\phi ))\leq \delta
\end{equation*}%
From this and \eqref{E:UV-stuff-03}, it follows that $R_{I^{\prime }}(\tilde{%
\phi})\leq 3KC_{1}\delta \leq \frac{1}{2}$. Substituting this into %
\eqref{E:UV-stuff-02} (with $I$ replaced by $I^{\prime }$), we obtain 
\begin{equation*}
\Vert \tilde{\phi}\Vert _{U_{I^{\prime },\Delta }^{2}H_{x}^{1}}\leq \Vert 
\tilde{\phi}_{0}\Vert _{H_{x}^{1}}+Q_{N,I}(\phi )+\frac{1}{2}\Vert \tilde{%
\phi}\Vert _{U_{I^{\prime },\Delta }^{2}H_{x}^{1}}
\end{equation*}%
or, after absorbing $\frac{1}{2}\Vert \tilde{\phi}\Vert _{U_{I^{\prime
},\Delta }^{2}H_{x}^{1}}$ into the left, 
\begin{equation*}
\Vert \tilde{\phi}\Vert _{U_{I^{\prime },\Delta }^{2}H_{x}^{1}}\leq 2\Vert 
\tilde{\phi}_{0}\Vert _{H_{x}^{1}}+2Q_{N,I}(\phi )
\end{equation*}%
This contradicts the maximality of $I^{\prime }\subset I$ satisfying %
\eqref{E:UV-stuff-04} unless $I^{\prime }=I$. Thus, we are able to conclude
that \eqref{E:UV-stuff-04} holds for $I^{\prime }=I$, which is the desired
result, once we have suitably estimated $Q_{N,I}(\phi )$.

Now we estimate $Q_{N,I}(\phi )$, which is more interesting as it uses the
sharpest available bilinear estimate \eqref{E:bil-UV02}. We use duality and
apply Lemma \ref{L:N-for-deriv-bil}. Taking $g\in V_{I,\Delta }^{2}L_{x}^{2}$
we need to show 
\begin{equation*}
\int_{I}\int \nabla \lbrack (W_{N}\ast |\phi |^{2})\;\phi
]\;g\;dx\,dt\lesssim N^{-\beta }(\log N)^{7}\Vert \phi \Vert _{U_{I,\Delta
}^{2}H_{x}^{1}}^{3}\Vert g\Vert _{V_{I,\Delta }^{2}L_{x}^{2}}
\end{equation*}%
We distribute the derivative on the left to obtain two terms 
\begin{align*}
\hspace{0.3in}& \hspace{-0.3in}\int_{I}\int \nabla \lbrack (W_{N}\ast |\phi
|^{2})\;\phi ]\;g\;dx\,dt \\
& =2\int_{I}\int [W_{N}\ast \Re (\bar{\phi}\nabla \phi )]\,\phi
\,g\,dx\,dt+\int_{I}\int [W_{N}\ast |\phi |^{2}]\,\nabla \phi \,g\,dx\,dt \\
& =\text{I}+\text{II}
\end{align*}%
For the second term, we estimate as 
\begin{equation*}
\text{II}\lesssim \Vert W_{N}\ast |\phi |^{2}\Vert
_{L_{I}^{2}L_{x}^{3}}\Vert \nabla \phi \Vert _{L_{I}^{4}L_{x}^{3}}\Vert
g\Vert _{L_{I}^{4}L_{x}^{3}}
\end{equation*}%
By Lemma \ref{L:N-for-deriv} with $s=1$ and $p=3$, 
\begin{align*}
\text{II}& \lesssim N^{-\beta }\Vert \nabla |\phi |^{2}\Vert
_{L_{I}^{2}L_{x}^{3}}\Vert \nabla \phi \Vert _{L_{I}^{4}L_{x}^{3}}\Vert
g\Vert _{L_{I}^{4}L_{x}^{3}} \\
& \lesssim N^{-\beta }\Vert \nabla \phi \Vert _{L_{I}^{2}L_{x}^{6}}\Vert
\phi \Vert _{L_{I}^{\infty }L_{x}^{6}}\Vert \nabla \phi \Vert
_{L_{I}^{4}L_{x}^{3}}\Vert g\Vert _{L_{I}^{4}L_{x}^{3}} \\
& \lesssim N^{-\beta }\Vert \phi \Vert _{U_{\Delta
,I}^{2}H_{x}^{1}}^{3}\Vert g\Vert _{V_{I,\Delta }^{2}L_{x}^{2}} \\
& \lesssim N^{-\beta }C_{1}^{3}\Vert g\Vert _{V_{I,\Delta }^{2}L_{x}^{2}}
\end{align*}%
where we have applied \eqref{E:Str-U} and also $V^{2}\hookrightarrow U^{4}$
embedding (see \eqref{E:UV-basic-embeddings}) for the $g$ term. For Term I,
however, we use the dual structure and apply Lemma \ref{L:N-for-deriv-bil}.
Applying Lemma \ref{L:N-for-deriv-bil} slightly interpolated with the
trivial estimate to insert the logarithmic terms 
\begin{equation*}
\text{I}\lesssim N^{-\beta }(\log N)^{7}\left\Vert \frac{\langle \nabla
\rangle ^{1/2}}{(\log \langle \nabla \rangle )^{3}}[\bar{\phi}\,\nabla \phi
]\right\Vert _{L_{I}^{2}L_{x}^{2}}\left\Vert \frac{\langle \nabla \rangle
^{1/2}}{(\log \langle \nabla \rangle )^{4}}[\phi g]\right\Vert
_{L_{I}^{2}L_{x}^{2}}
\end{equation*}%
We can (nearly) rescue the $\frac{1}{2}$ derivative in each term using the
bilinear Strichartz Lemma \ref{L:bil-Str} (for each $L_{I}^{2}L_{x}^{2}$
term). Employing a Littlewood-Paley decomposition 
\begin{equation*}
\begin{aligned} \text{I} \lesssim N^{-\beta}(\log N)^7
\sum_{M_1,M_2,M_3,M_4} & M_1 \frac{\max(M_1,M_2)^{1/2}}{(\log
\max(M_1,M_2))^3} \frac{\max(M_3,M_4)^{1/2}}{(\log\max (M_3,M_4))^4} \\
&\qquad \| P_{M_1} \phi \; \overline{P_{M_2}\phi} \|_{L_I^2L_x^2} \| P_{M_3}
\phi \; P_{M_4}g\|_{L_I^2L_x^2} \end{aligned}
\end{equation*}%
Applying the bilinear Strichartz estimate Lemma \ref{L:bil-Str} to the first
term, and \eqref{E:bil-UV02} to the second term, which introduces the factor 
\begin{eqnarray*}
&&\min (M_{1},M_{2})\max (M_{1},M_{2})^{-1/2}\min (M_{3},M_{4}) \\
&&\times \max (M_{3},M_{4})^{-1/2}\left( \log \frac{\max (M_{3},M_{4})}{\min
(M_{3},M_{4})}+1\right)
\end{eqnarray*}%
we obtain%
\begin{eqnarray*}
\text{I} &\lesssim &N^{-\beta }(\log N)^{7}\sum_{M_{1},M_{2},M_{3},M_{4}}(%
\frac{M_{1}\min (M_{1},M_{2})}{(\log \max (M_{1},M_{2}))^{3}} \\
&&\times \frac{\min (M_{3},M_{4})}{(\log \max (M_{3},M_{4}))^{4}}\left( \log 
\frac{\max (M_{3},M_{4})}{\min (M_{3},M_{4})}+1\right) \\
&&\times \quad \Vert P_{M_{1}}\phi \Vert _{U_{\Delta ,I}^{2}L_{x}^{2}}\Vert
P_{M_{2}}\phi \Vert _{U_{\Delta ,I}^{2}L_{x}^{2}}\Vert P_{M_{3}}\phi \Vert
_{U_{\Delta ,I}^{2}L_{x}^{2}}\Vert P_{M_{4}}g\Vert _{V_{\Delta
,I}^{2}L_{x}^{2}})
\end{eqnarray*}%
where the extra log factor comes from the need to get $V^{2}$ on the $g$
term instead of $U^{2}$, as explained above \eqref{E:bil-UV02}. Distributing
the derivatives onto each of the three $\phi $ factors,%
\begin{eqnarray*}
\text{I} &\lesssim &N^{-\beta }(\log N)^{7}\sum_{M_{1},M_{2},M_{3},M_{4}}(%
\frac{\min (M_{1},M_{2})}{M_{2}(\log \max (M_{1},M_{2}))^{3}} \\
&&\times \frac{\min (M_{3},M_{4})}{M_{3}(\log \max (M_{3},M_{4}))^{3}}\quad
\Vert P_{M_{1}}\phi \Vert _{U_{\Delta ,I}^{2}H_{x}^{1}}\Vert P_{M_{2}}\phi
\Vert _{U_{\Delta ,I}^{2}H_{x}^{1}} \\
&&\times \Vert P_{M_{3}}\phi \Vert _{U_{\Delta ,I}^{2}H_{x}^{1}}\Vert
P_{M_{4}}g\Vert _{V_{\Delta ,I}^{2}L_{x}^{2}})
\end{eqnarray*}%
Applying the estimates 
\begin{equation*}
\Vert P_{M_{j}}\phi \Vert _{U_{\Delta ,I}^{2}H_{x}^{1}}\lesssim \Vert \phi
\Vert _{U_{\Delta ,I}^{2}H_{x}^{1}}\,,\qquad \Vert P_{M_{4}}g\Vert
_{V_{\Delta ,I}^{2}L_{x}^{2}}\lesssim \Vert g\Vert _{V_{\Delta
,I}^{2}L_{x}^{2}}
\end{equation*}%
we can carry out the sum to obtain $\text{I}\lesssim N^{-\beta }(\log
N)^{7}C_{1}^{3}\Vert g\Vert _{V_{I,\Delta }^{2}L_{x}^{2}}$. Collecting the
estimates on I and II, we obtain 
\begin{equation}
Q_{N,I}(\phi )\lesssim N^{-\beta }(\log N)^{7}C_{1}^{3}  \label{E:QNI-bound}
\end{equation}
\end{proof}

\begin{proposition}
\label{P:diff-control-ep} Let $q>1$. Suppose that on a time interval $I$ the
solutions to (\ref{NLS:opening cubic}) and (\ref{NLS:Hartree}) satisfy 
\begin{equation*}
\Vert \phi \Vert _{U_{I,\Delta }^{2}H_{x}^{q}}\leq 2C_{1}\,,\qquad \Vert
\phi _{N}\Vert _{U_{I,\Delta }^{2}H_{x}^{q}}\leq 2C_{1}
\end{equation*}%
for some constant $C_{1}$ and 
\begin{equation*}
\Vert \langle \nabla \rangle^{q} \phi \Vert _{L_{I}^{2}L_{x}^{6}}\leq \delta
\,,\qquad \Vert \langle \nabla \rangle^{q} \phi _{N}\Vert
_{L_{I}^{2}L_{x}^{6}}\leq \delta
\end{equation*}%
Consider the difference 
\begin{equation*}
\tilde{\phi}(t)=\phi _{N}(t)-\phi (t)
\end{equation*}%
with initial condition $\tilde{\phi}_{0}=(\phi _{N})_{0}-\phi _{0}$ for the
time interval $I$.

Provided $\delta >0$ is chosen small, $\Vert \tilde{\phi}_{0}\Vert
_{H_{x}^{1}}$ is sufficiently small, and $N$ is sufficiently large (all of
these thresholds are expressed in terms of $C_{1}$ only), then we have 
\begin{equation}  \label{E:diff-control-ep}
\Vert \tilde{\phi}\Vert _{U_{I,\Delta }^{2}H_{x}^{1}}\lesssim \Vert \tilde{%
\phi}_{0}\Vert _{H_{x}^{1}}+N^{-q\beta }(\log N)^7 C_{1}^{3}
\end{equation}
\end{proposition}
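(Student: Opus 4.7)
The plan is to mirror the proof of Proposition~\ref{P:diff-control} almost verbatim, replacing the forcing estimate by an $H^{q}$-gain version. Writing $\tilde\phi = \phi_N - \phi$ and $W_N = V_N - b_0 \delta$, we obtain the same equation
\[
i\partial_t \tilde\phi + \Delta \tilde\phi = (W_N*|\phi|^2)\phi + (\text{linear, quadratic, cubic in }\tilde\phi),
\]
and apply the $U$-$V$ duality \eqref{E:UV-dual} to reduce to bounding each term in $U^2_{I,\Delta}H^1_x$. The linear/quadratic/cubic-in-$\tilde\phi$ terms are estimated exactly as before, since only $H^1$ control of $\tilde\phi$ enters, and $\phi,\phi_N$ have $L^2_I L^6_x$ derivative bounds that are even stronger than in Proposition~\ref{P:diff-control} (by $\langle \nabla\rangle^q$ instead of $\langle \nabla\rangle$). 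This yields a bootstrap inequality of the form
\[
\|\tilde\phi\|_{U^2_{I,\Delta}H^1_x} \le \|\tilde\phi_0\|_{H^1_x} + Q^{(q)}_{N,I}(\phi) + R_I(\tilde\phi)\,\|\tilde\phi\|_{U^2_{I,\Delta}H^1_x},
\]
with $R_I(\tilde\phi)\le \tfrac12$ under the same smallness assumptions, and $Q^{(q)}_{N,I}(\phi)$ is the dual pairing controlling the forcing $\langle \nabla\rangle[(W_N*|\phi|^2)\phi]$ against $g\in V^2_{I,\Delta}L^2_x$.

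The one new ingredient is the improved forcing bound
\[
Q^{(q)}_{N,I}(\phi)\;\lesssim_{q}\; N^{-q\beta}(\log N)^7 \, C_1^3,\qquad q\in[1,2],
\]
which should follow from the fact that, since $V$ is even and smooth with $\int V = b_0$, the multiplier $\widehat{W_N}(\xi) = \widehat V(\xi/N^\beta) - \widehat V(0)$ satisfies $|\widehat{W_N}(\xi)|\lesssim \min(1, N^{-q\beta}|\xi|^q)$ for every $q\in [0,2]$. Concretely, we expect the previously invoked ``$N^{-\beta}$ for one derivative'' lemmas (Lemma~\ref{L:N-for-deriv} and its bilinear counterpart Lemma~\ref{L:N-for-deriv-bil}) to have natural $H^{q}$ analogues, trading $q$ derivatives of regularity for a factor $N^{-q\beta}$ on the convolution. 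Distributing the outer $\nabla$ in $\nabla[(W_N*|\phi|^2)\phi]$ gives two pieces,
\[
\mathrm{I} = \int_I\!\!\int [W_N*\Re(\bar\phi\,\nabla\phi)]\,\phi\,g\,dx\,dt,\qquad
\mathrm{II} = \int_I\!\!\int [W_N*|\phi|^2]\,\nabla\phi\,g\,dx\,dt.
\]

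For~II one uses the $L^p$-form of the $H^q$-gain directly: $\|W_N*|\phi|^2\|_{L^2_IL^3_x} \lesssim N^{-q\beta}\|\langle \nabla\rangle^q|\phi|^2\|_{L^2_IL^3_x}$, fragment the derivative by a fractional Leibniz rule, and finish with H\"older plus the Strichartz embedding $\|\langle\nabla\rangle^q\phi\|_{L^4_IL^3_x}\lesssim \|\phi\|_{U^2_{I,\Delta}H^q_x}\le 2C_1$ and $V^2\hookrightarrow U^4$ on the $g$ term. For~I, which is the main difficulty, we use the bilinear $H^q$-gain lemma: it provides $N^{-q\beta}$ together with an $H^{q/2}$ splitting between the two bilinear factors, interpolated with a logarithmic loss to fit the bilinear $U^2$-$V^2$ Strichartz estimate \eqref{E:bil-UV02}. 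A Littlewood--Paley decomposition, applying Lemma~\ref{L:bil-Str} to $(\bar\phi,\nabla\phi)$ and \eqref{E:bil-UV02} to $(\phi,g)$, and distributing the $q/2+q/2$ derivatives as $(q,0,0)$ among the three $\phi$-factors, makes the frequency sum converge with constant $(\log N)^7 C_1^3$, exactly as in the $q=1$ case.

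The main obstacle is the bilinear estimate in piece~I for $q\in(1,2]$: we must keep the logarithmic loss from the $U^2$-$V^2$ swap bounded independently of $q$ while distributing $q$ derivatives across three factors in $U^2_{I,\Delta}H^q_x$. The restriction $q\le 2$ is forced by the Taylor expansion of $\widehat V(\xi/N^\beta)-\widehat V(0)$: since $V$ is even, the linear term vanishes and one gets clean $|\xi|^2 N^{-2\beta}$ decay, but for $q>2$ the next term in $\widehat V$ no longer vanishes and the gain saturates at $N^{-2\beta}$. Once the bound $Q^{(q)}_{N,I}(\phi)\lesssim N^{-q\beta}(\log N)^7 C_1^3$ is in hand, the bootstrap closes exactly as in Proposition~\ref{P:diff-control}, giving \eqref{E:diff-control-ep}.
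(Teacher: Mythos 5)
Your overall architecture is the same as the paper's: rerun the bootstrap of Proposition \ref{P:diff-control} verbatim, observe that only the forcing term $Q_{N,I}(\phi)$ needs a new bound, and extract $N^{-q\beta}$ from the higher-order vanishing of $\widehat{W_N}(\xi)=\hat V(\xi N^{-\beta})-\hat V(0)$ (using that $V$ is even, so the gain saturates at two derivatives), with the same I/II decomposition, Littlewood--Paley, bilinear Strichartz, and the $U^2$--$V^2$ swap \eqref{E:bil-UV02} with a $(\log N)^7$ loss. The treatment of II and the identification of the $q\le 2$ threshold are fine.

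There is, however, a genuine gap in your treatment of term I: the symmetric $H^{q/2}$--$H^{q/2}$ splitting of the bilinear gain does not close for $q>1$. With your split, the pairing $P_{M_3}\phi\,P_{M_4}g$ carries a weight $\max(M_3,M_4)^{q/2}$, and \eqref{E:bil-UV02} only refunds $\min(M_3,M_4)\max(M_3,M_4)^{-1/2}$, leaving a factor $\max(M_3,M_4)^{(q-1)/2}\min(M_3,M_4)$. In the regime $M_4\gg M_3$ this grows like $M_4^{(q-1)/2}$, and it cannot be absorbed: $g$ lives only in $V^2_{I,\Delta}L^2_x$ (so $\|P_{M_4}g\|_{V^2_{\Delta,I}L^2_x}$ has no decay in $M_4$), and putting derivatives on the low-frequency factor $P_{M_3}\phi$ does not control $M_4$; logarithmic weights cannot tame a positive power, so the frequency sum diverges. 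The "distribute $q/2+q/2$ as $(q,0,0)$" step is exactly where this breaks. The fix --- and what the paper actually does --- is to split asymmetrically: apply the convolution gain with $s=q-\tfrac12$ on the product $\bar\phi\,\nabla\phi$ and only $s=\tfrac12$ on $\phi\,g$, so the $g$-pairing is treated exactly as in the $q=1$ case (net factor $\min(M_3,M_4)$, no positive power of $M_4$), while all the extra $q-1$ derivatives land on the left product, whose two factors both enjoy $U^2_{I,\Delta}H^q_x$ control; after dividing by $M_1^qM_2^qM_3^q$ one is left with $\min(M_1,M_2)^{-(q-1)}\min(M_3,M_4)M_3^{-q}$ times the log weights, and the sum converges. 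So your scheme is repairable within your own framework, but as written the key frequency sum in I fails for every $q>1$.
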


\begin{proof}
The proof follows that of Proposition \ref{P:diff-control}, with the only
modification needed in the treatment of the forcing term 
\begin{equation*}
Q_{N,I}(\phi )\overset{\mathrm{def}}{=}\sup_{\substack{ g\in V_{I,\Delta
}^{2}L_{x}^{2}  \\ \Vert g\Vert _{V_{I,\Delta }^{2}L_{x}^{2}}\leq 1}}%
\int_{I}\int \langle \nabla \rangle \lbrack (W_{N}\ast |\phi |^{2})\phi
]\,g\,dx\,dt
\end{equation*}%
After distributing the derivative on the left we obtain two terms 
\begin{align*}
\hspace{0.3in}& \hspace{-0.3in}\int_{I}\int \nabla \lbrack (W_{N}\ast |\phi
|^{2})\;\phi ]\;g\;dx\,dt \\
& =2\int_{I}\int [W_{N}\ast \Re (\bar{\phi}\nabla \phi )]\,\phi
\,g\,dx\,dt+\int_{I}\int [W_{N}\ast |\phi |^{2}]\,\nabla \phi \,g\,dx\,dt \\
& =\text{I}+\text{II}
\end{align*}%
Term II is estimated as in the proof of Proposition \ref{P:diff-control},
giving the bound 
\begin{equation*}
\text{II}\lesssim N^{-\min (q,2)\beta }C_{1}^{3}\Vert g\Vert _{V_{I,\Delta
}^{2}L_{x}^{2}}
\end{equation*}%
Term I is also estimated as in the proof of Proposition \ref{P:diff-control}%
. Moreover, we apply Lemma \ref{L:N-for-deriv} on the left product $%
W_{N}\ast (\phi \;\nabla \phi )$ with $s=q-\frac{1}{2}$, and apply lemma \ref%
{L:N-for-deriv} on the right product $W_{N}\ast (\phi g)$ with $s=\frac{1}{2}
$ to obtain (dropping log factors for clarity)%
\begin{eqnarray*}
\text{I} &\lesssim &(\log N)^{7}N^{-q\beta
}\sum_{M_{1},M_{2},M_{3},M_{4}}(\max (M_{1},M_{2})^{q}\min (M_{1},M_{2}) \\
&&\times \min (M_{3},M_{4})\Vert P_{M_{1}}\phi \Vert _{U_{\Delta
,I}^{2}L_{x}^{2}}\Vert P_{M_{2}}\phi \Vert _{U_{\Delta
,I}^{2}L_{x}^{2}}\Vert P_{M_{3}}\phi \Vert _{U_{\Delta
,I}^{2}L_{x}^{2}}\Vert P_{M_{4}}g\Vert _{V_{\Delta ,I}^{2}L_{x}^{2}})
\end{eqnarray*}
Putting $q$ derivatives onto each $\phi $ factor gives%
\begin{eqnarray*}
\text{I} &\lesssim &(\log N)^{7}N^{-q\beta
}\sum_{M_{1},M_{2},M_{3},M_{4}}\min (M_{1},M_{2})^{-(q-1)}\min
(M_{3},M_{4})M_{3}^{-q} \\
&&\times \Vert P_{M_{1}}\phi \Vert _{U_{\Delta ,I}^{2}H_{x}^{q}}\Vert
P_{M_{2}}\phi \Vert _{U_{\Delta ,I}^{2}H_{x}^{q}}\Vert P_{M_{3}}\phi \Vert
_{U_{\Delta ,I}^{2}H_{x}^{q}}\Vert P_{M_{4}}g\Vert _{V_{\Delta
,I}^{2}L_{x}^{2}}
\end{eqnarray*}
Now carry out the sum (recall that the introduction of $(\log N)^{7}$
provided $(\log M_{i})$ factors that allow us to sum).
\end{proof}

We can now conclude the proof of Theorem \ref{Thm:biScattering}. Recall
there is a finite partition of $0 \leq t<+\infty$ 
\begin{equation*}
0 = t_0 < t_1< \cdots < t_J= +\infty
\end{equation*}
such that for each $I=[t_{j-1},t_j)$, the solutions $\phi$ and $\phi_M$ are
small in the Strichartz norms, i.e. \eqref{E:Strichartz-small} holds. By
Corollary \ref{C:U2control}, the $U^2$ norms of $\phi$ and $\phi_N$ are
controlled, i.e. \eqref{E:U2control} holds. Thus, for each time interval $%
I=[t_{j-1},t_j)$, the hypotheses of Proposition \ref{P:diff-control} are
satisfied, and \eqref{E:diff-control} holds. This implies, in particular,
that 
\begin{equation*}
\| \tilde \phi(t) \|_{L_{[t_{j-1},t_j]}^\infty H_x^1} \lesssim \|\tilde
\phi(t_{j-1}) \|_{H_x^1} + C N^{-\beta} (\log N)^7
\end{equation*}
Therefore, the estimate on the $j$th interval feeds into the estimate for
the $(j+1)$st interval, and since there are only a finite number of time
intervals, we can reach all time.

For the $H^{q}$ version, we apply a persistence of regularity argument, as
in Bourgain \cite{B1998}, to deduce that the $H^{q}$ norms of $\phi $ and $%
\phi _{N}$ are globally bounded, and that modifications to $q$ regularity of
Theorem \ref{T:scat} and Corollary \ref{C:U2control} follow. Thus, on each
time interval $I=[t_{j-1},t_{j})$, the hypotheses of Proposition \ref%
{P:diff-control-ep} are satisfied, and \eqref{E:diff-control-ep} holds. This
implies, in particular, that 
\begin{equation*}
\Vert \tilde{\phi}(t)\Vert _{L_{[t_{j-1},t_{j}]}^{\infty }H_{x}^{1}}\lesssim
\Vert \tilde{\phi}(t_{j-1})\Vert _{H_{x}^{1}}+CN^{-q\beta }(\log N)^{7}
\end{equation*}%
Therefore, the estimate on the $j$th interval feeds into the estimate for
the $(j+1)$st interval, and since there are only a finite number of time
intervals, we can reach all time.

Next we address the proof of the difference estimate 
\eqref{estimate:key-2
for almost optimal}. These estimates will follow from the lemma below since 
\begin{equation*}
\sum_{k=0}^{+\infty }Z^{-k}(2k)(3C_{1})^{2k-1}<\infty
\end{equation*}%
provided $Z>(3C_{1})^{2}$.

\begin{lemma}
\label{Lemma:Binomial}Let 
\begin{equation*}
G_{N,k}=\phi _{N}(x_{1})\cdots \phi _{N}(x_{k})\overline{\phi _{N}}%
(x_{1}^{\prime })\cdots \overline{\phi _{N}}(x_{k}^{\prime })
\end{equation*}%
\begin{equation*}
G_{k}=\phi (x_{1})\cdots \phi (x_{k})\overline{\phi }(x_{1}^{\prime })\cdots 
\overline{\phi }(x_{k}^{\prime })
\end{equation*}%
and let $\tilde{\phi}=\phi _{N}-\phi $. If 
\begin{equation*}
\Vert \phi _{N}\Vert _{H^{1}}\leq C_{1}\quad \text{and}\quad \Vert \phi
\Vert _{H^{1}}\leq C_{1}
\end{equation*}%
then 
\begin{equation*}
\Vert G_{N,k}-G_{k}\Vert _{H_{\mathbf{x}_{k},\mathbf{x}_{k}^{\prime
}}^{1}}\leq 2k(3C_{1})^{2k-1}\Vert \tilde{\phi}\Vert _{H^{1}}
\end{equation*}
\end{lemma}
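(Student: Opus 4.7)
The plan is a standard telescoping decomposition exploiting the tensor product structure of the kernels. I would order the $2k$ factors of $G_{N,k}$ as $a_1\cdots a_{2k} = \phi_N(x_1)\cdots\phi_N(x_k)\overline{\phi_N}(x_1')\cdots\overline{\phi_N}(x_k')$, and write $G_k = b_1\cdots b_{2k}$ analogously with each $\phi_N$ replaced by $\phi$. Applying the familiar algebraic identity
\begin{equation*}
\prod_{j=1}^{2k}a_j - \prod_{j=1}^{2k}b_j = \sum_{j=1}^{2k}\Bigl(\prod_{i<j}a_i\Bigr)(a_j-b_j)\Bigl(\prod_{i>j}b_i\Bigr),
\end{equation*}
one decomposes $G_{N,k}-G_k$ into $2k$ summands, each of which carries exactly one factor of $\tilde\phi$ or $\overline{\tilde\phi}$ in a single variable, while the remaining $2k-1$ variables carry factors drawn from $\{\phi,\bar\phi,\phi_N,\overline{\phi_N}\}$.

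Each summand is then a pure tensor product on $\mathbb{R}^{6k}$ whose factors live in \emph{disjoint} copies of $\mathbb{R}^3$. For such tensor products one has the estimate
\begin{equation*}
\Bigl\| \bigotimes_{j} f_j \Bigr\|_{H^1(\mathbb{R}^{6k})} \lesssim \prod_j \|f_j\|_{H^1(\mathbb{R}^3)},
\end{equation*}
which follows by expanding $\|\cdot\|_{H^1}^2=\|\cdot\|_{L^2}^2+\sum_m\|\partial_m\cdot\|_{L^2}^2$ and using the disjointness of the variables (the derivative lands on exactly one factor at a time). Applied to each telescoping summand, this yields a bound of $\|\tilde\phi\|_{H^1}\cdot C_1^{2k-1}$ up to a multiplicative constant, since the remaining $2k-1$ factors are each controlled in $H^1(\mathbb{R}^3)$ by $C_1$. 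Summing over the $2k$ summands produces the stated estimate $2k(3C_1)^{2k-1}\|\tilde\phi\|_{H^1}$, the constant $3$ comfortably absorbing the tensor product constant.

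There is no substantive obstacle here; the lemma is a bookkeeping step whose purpose is to transfer the $H^1$-control of $\tilde\phi$ furnished by Propositions \ref{P:diff-control} and \ref{P:diff-control-ep} to $H^1$-control of $|\phi_N\rangle\langle\phi_N|^{\otimes k}-|\phi\rangle\langle\phi|^{\otimes k}$ for each $k$. The only mildly delicate point is to match the precise form of the $H^1$ norm used in the master norm \eqref{norm:master capital} (defined via $S^{(1,k)}$) with the Sobolev norm arising in the tensor product calculation, and to track constants carefully so that when the estimate is summed against $Z^{-k}$ the master norm converges, which forces the threshold $Z>(3C_1)^2$ noted in the preamble to the lemma.
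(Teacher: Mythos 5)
Your proof is correct, but it takes a genuinely different decomposition from the paper's. The paper substitutes $\phi _{N}=\phi +\tilde{\phi}$ into every factor of $G_{N,k}$, expands into $2^{2k}$ terms, groups them by the number $\ell \geq 1$ of $\tilde{\phi}$-factors to get the bound $\sum_{\ell =1}^{2k}\binom{2k}{\ell }\Vert \tilde{\phi}\Vert _{H^{1}}^{\ell }\Vert \phi \Vert _{H^{1}}^{2k-\ell }$, and then uses $\binom{2k}{\ell }\leq 2k\binom{2k-1}{\ell -1}$ together with the binomial theorem to resum this as $2k\Vert \tilde{\phi}\Vert _{H^{1}}(\Vert \tilde{\phi}\Vert _{H^{1}}+\Vert \phi \Vert _{H^{1}})^{2k-1}$; the factor $3C_{1}$ arises because $\Vert \tilde{\phi}\Vert _{H^{1}}+\Vert \phi \Vert _{H^{1}}\leq 3C_{1}$. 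Your telescoping identity instead produces only $2k$ summands, each carrying exactly one factor of $\tilde{\phi}$ or $\overline{\tilde{\phi}}$ and $2k-1$ factors drawn from $\{\phi _{N},\overline{\phi _{N}},\phi ,\overline{\phi }\}$, so you land directly on $2k\,C_{1}^{2k-1}\Vert \tilde{\phi}\Vert _{H^{1}}$, which is in fact slightly sharper than the stated bound (no combinatorial resummation, no $3$ needed from the expansion itself). Both arguments rest on the same underlying fact, multiplicativity of the one-derivative-per-variable norm over tensor products in disjoint variables. Regarding the one caveat you flag: the norm relevant for feeding into the master norm is indeed the product-type norm $\Vert S^{(1,k)}\cdot \Vert _{L^{2}}$, and for pure tensor products this factorizes \emph{exactly}, $\Vert S^{(1,k)}\bigotimes_{j}f_{j}\Vert _{L^{2}}=\prod_{j}\Vert \langle \nabla \rangle f_{j}\Vert _{L^{2}}$, so there is no hidden tensor-product constant to absorb and your "mildly delicate point" is harmless; if instead one reads $H^{1}_{\mathbf{x}_{k},\mathbf{x}_{k}^{\prime }}$ as the isotropic Sobolev norm on $\mathbb{R}^{6k}$, your squared-norm expansion also gives constant $1$. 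Either way the argument closes, and your route is a clean, marginally more economical alternative to the paper's binomial bookkeeping.
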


\begin{proof}
In the formula for $G_{N,k}$, replace each instance of $\phi _{N}$ by $%
\tilde{\phi}+\phi $, and expand to a sum of $2^{2k}$ terms, and note that
passing to the difference $G_{N,k}-G_{k}$ removes one of these terms. Apply
the $H_{\mathbf{x}_{k},\mathbf{x}_{k}^{\prime }}^{1}$ norm, bound via
Minkowski's inequality by a sum with the norm on each of the individual
terms. At this point, the $2^{2k}-1$ terms can be grouped into $2k$ terms: 
\begin{equation*}
\Vert G_{N,k}-G_{k}\Vert _{H_{\mathbf{x}_{k},\mathbf{x}_{k}^{\prime
}}^{1}}\leq \sum_{\ell =1}^{2k}\binom{2k}{\ell }\Vert \tilde{\phi}\Vert
_{H^{1}}^{\ell }\Vert \phi \Vert _{H^{1}}^{2k-\ell }
\end{equation*}%
where we note that the sum starts at $\ell =1$ and not $\ell =0$. Applying
the bound $\binom{2k}{\ell }\leq 2k\binom{2k-1}{\ell -1}$ and reindexing the
sum with $j=\ell +1$, 
\begin{eqnarray*}
&&\Vert G_{N,k}-G_{k}\Vert _{H_{\mathbf{x}_{k},\mathbf{x}_{k}^{\prime }}^{1}}
\\
&\leq &2k\Vert \tilde{\phi}\Vert _{H^{1}}\sum_{j=0}^{2k-1}\binom{2k-1}{j}%
\Vert \tilde{\phi}\Vert _{H^{1}}^{j}\Vert \phi \Vert _{H^{1}}^{2k-1-j} \\
&=&2k\Vert \tilde{\phi}\Vert _{H^{1}}(\Vert \tilde{\phi}\Vert _{H^{1}}+\Vert
\phi \Vert _{H^{1}})^{2k-1}
\end{eqnarray*}%
From this, the claimed estimate follows.
\end{proof}

\subsection{Optimality via Space-time Resonance\label{sec:optimality}}

We can in fact provide an example showing that $N^{-\beta q}$ is optimal for 
$\phi \in H^q$, $q\geq 1$. Consider the main forcing term in the equation
for $\tilde{\phi}$ 
\begin{equation}  \label{E:F-def}
F(t) = \int_0^t e^{i(t-t^{\prime })\Delta} [(W_N*|\phi|^2)\, \phi] \,
dt^{\prime }
\end{equation}
where $\phi \in H^q$ is a scattering solution to NLS. For simplicity, let us
replace $\phi$ by a linear solution, i.e. take 
\begin{equation*}
\phi(t) = e^{it\Delta} f
\end{equation*}
for some $f=f(x)$, which is a natural benchmark on which to assess $F(t)$ in %
\eqref{E:F-def} since the NLS solution $\phi$ scatters.

\begin{lemma}
For $q\geq 1$, and for each $N \gg 1$, there exists a choice of $f$ for
which $\|f\|_{H^q} =1$ and $F(t)$ given by \eqref{E:F-def} with $\phi(t) =
e^{it\Delta}f$ satisfies 
\begin{equation*}
\|F(t)\|_{L_{t\in [0,1]}^\infty H_x^1} \gtrsim N^{-q\beta}
\end{equation*}
\end{lemma}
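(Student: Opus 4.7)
The plan is to exhibit $f$ depending on $N$ whose Fourier transform has two well‑separated bumps: a unit–scale bump at the origin and a much smaller bump at frequency $N^\beta e$, so that the cross term in $(W_N*|\phi|^2)\phi$ places Fourier mass at the critical scale $N^\beta$ where $\hat W_N$ is $O(1)$ rather than the $O(N^{-2\beta})$ it would have on low frequencies. Fix a smooth nonnegative bump $\chi$ supported in $B_1$ with $\chi(0)=1$, choose a unit vector $e$ with $c_V:=\hat V(e)-\hat V(0)\neq 0$ (which exists because $\hat V$ is real‑analytic and nonconstant when $V\ge 0$ is a nontrivial smooth even bump), and set
\begin{equation*}
\hat f(\xi)=c_1\chi(\xi)+c_2 N^{-q\beta}\chi(\xi-N^{\beta}e),
\end{equation*}
where $c_1,c_2>0$ are $N$–independent constants tuned so that $\|f\|_{H^q}=1$; both bumps contribute $O(1)$ to $\|f\|_{H^q}^2$ precisely because of the $N^{-q\beta}$ prefactor on the high‑frequency bump. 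Split $\phi=\phi_1+\phi_2$ accordingly.

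Expanding $(W_N*|\phi|^2)\phi$ produces eight trilinear pieces $(W_N*(\phi_i\bar\phi_j))\phi_k$, and I would isolate the interference piece $(W_N*(\phi_2\bar\phi_1))\phi_1$, whose Fourier transform is concentrated near $\zeta=N^{\beta}e$ and for which $\hat W_N(\zeta_1-\zeta_2)$ is evaluated near $N^\beta e$ and hence equals $c_V+O(N^{-\beta})$. Every other trilinear piece either has $\hat W_N$ evaluated near zero (gaining an $O(N^{-2\beta})$ smallness from $\hat W_N(\eta)=O(|\eta|^2/N^{2\beta})$), carries an extra amplitude factor $c_2^2 N^{-2q\beta}$ or $c_2^3 N^{-3q\beta}$, or lives in a Fourier region disjoint from $B_1(N^\beta e)$ (namely near $0$, $\pm N^\beta e$ with other sign configurations, or $\pm 2N^\beta e$); none of them can compete with or cancel the interference piece in the $\zeta\sim N^\beta e$ region.

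For the interference piece, the time–resonance function simplifies to
\begin{equation*}
\Omega'=|\zeta|^2-|\zeta_1|^2+|\zeta_2|^2-|\zeta_3|^2=2(\zeta_1-\zeta_2)\cdot(\zeta-\zeta_1)=2N^{\beta}e\cdot(\zeta-\zeta_1)+O(1),
\end{equation*}
so the $t'$–integral $-i\int_0^1 e^{it'\Omega'}\,dt'=(1-e^{i\Omega'})/\Omega'$ is of size $O(1)$ on the codimension‑one resonant slab $|e\cdot(\zeta-\zeta_1)|\lesssim N^{-\beta}$ ($\zeta_1$-volume $\sim N^{-\beta}$ inside the unit support of $\chi(\cdot-N^\beta e)$) and $O(N^{-\beta})$ off it. With $\chi\ge 0$ and $c_V>0$, after performing the $t'$–integral the remaining $(\zeta_1,\zeta_2)$–integrand has a definite sign in the direction of $e$ on the slab, and the contributions combine coherently to give $|\hat F(1,\zeta)|\gtrsim c_1^2c_2 c_V\,N^{-(q+1)\beta}$ on a unit–volume set of $\zeta$'s near $N^{\beta}e$. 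Weighting by $\langle\zeta\rangle^2\sim N^{2\beta}$ and applying Plancherel,
\begin{equation*}
\|F(1)\|_{H^1}^2\gtrsim N^{2\beta}\cdot N^{-2(q+1)\beta}=N^{-2q\beta},
\end{equation*}
which is the desired bound.

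The main technical obstacle is the coherence step: the real part of the $t'$–integral, $(1-\cos\Omega')/\Omega'$, is odd in $\Omega'$ and could in principle cancel when integrated against a symmetric amplitude. I would handle this by taking $\chi$ radial and working instead with the imaginary part $-\sin\Omega'/\Omega'$, which is even and has a definite sign at its peak. After the $\eta_\perp$ and $\zeta_2$ integrations (both over $O(1)$ volumes with nonnegative integrand), the lower bound is reduced to a one‑dimensional integral $\int \frac{\sin(2N^\beta s)}{2N^\beta s}\psi(s)\,ds$ with $\psi\ge 0$ compactly supported, which is the Fourier transform of a nonnegative bump at the nonzero frequency $N^\beta$ and is bounded below by $c/N^\beta$ through an elementary integration by parts / Fourier–of–a–bump argument. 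Once this coherence is in hand, the remaining steps—the algebraic identity for $\Omega'$, the volume count for the resonant slab, and the bookkeeping of the seven subdominant trilinears—are straightforward.
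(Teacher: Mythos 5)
Your construction is sound in outline and arrives at the same scaling as the paper, but by a genuinely different route. The paper also uses a two-bump frequency ansatz, yet it makes the bumps only $N^{-\beta}$ wide in the $\xi_1$-direction (with amplitudes $N^{\beta/2}$ and $N^{-\beta(q-\frac12)}$ to keep $\|f\|_{H^q}\sim 1$), so that $|\xi|^2$ is resolved to $O(1)$ on each support: the phase factors collapse to exactly $e^{\pm it'N^{2\beta}}$ times $e^{it'\cdot O(1)}$, the Duhamel phase $e^{it'\xi^2}$ cancels one of them identically, and the time integral of the resonant piece is simply $\sim t$ with no oscillatory tail to control. Your version keeps $O(1)$-width smooth bumps and instead extracts the $N^{-\beta}$ gain from the measure of the time-resonant slab via a stationary-phase/Dirichlet-kernel argument $\int \frac{\sin(Ms)}{Ms}\psi(s)\,ds\sim \pi\psi(0)/M$. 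That is a legitimate alternative (and it is the mechanism that defeats the spurious $\log N$ which a crude bound $|\sin\Omega/\Omega|\le 1/|\Omega|$ on the off-slab region would produce), but it is more delicate than the paper's bookkeeping, which has no slab/tail dichotomy at all.

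Two points in your coherence step need repair. First, with unit-radius bumps the phase is $\Omega=2N^{\beta}e\cdot(\zeta-\zeta_1)+r$, where the residual $r=2(\zeta_1-\zeta_2-N^{\beta}e)\cdot(\zeta-\zeta_1)$ is only $O(1)$ and can exceed $\pi$ on the slab $e\cdot(\zeta-\zeta_1)\approx 0$; hence $\sin\Omega/\Omega$ is \emph{not} sign-definite there and your clean 1D reduction to $\sin(2N^{\beta}s)/(2N^{\beta}s)$ against a fixed $\psi\ge 0$ is not valid as stated, because $\Omega$ is not a function of $s=e\cdot(\zeta-\zeta_1)$ alone. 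This is fixable: shrink both bump radii to a small absolute constant $\epsilon$ (making $|r|\lesssim\epsilon^2$) and, for each fixed transverse variable, absorb $r$ by the change of variables $v=s+r/(2N^{\beta})$ before applying the Dirichlet limit; alternatively adopt the paper's $N^{-\beta}$-thin supports, which remove the issue entirely. Second, minor but worth correcting: $\int_0^1 e^{it'\Omega}dt'$ has \emph{real} part $\sin\Omega/\Omega$ (even, equal to $1$ at resonance) and imaginary part $(1-\cos\Omega)/\Omega$ (odd) — you have the labels swapped, though the intended kernel is the right one; and for $V\ge 0$, even and nontrivial one has $\hat V(e)<\hat V(0)$ for every $e\ne 0$ (no analyticity or compact support needed), so $c_V$ is automatically nonzero but negative — harmless for a lower bound on $|\hat F|$, but the phrase ``with $c_V>0$'' should go. With these adjustments your argument closes, and the bookkeeping of the seven subdominant trilinear pieces is as routine as you claim.
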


\begin{proof}
The strategy is to concoct a choice for $f$ in which the frequency support
is sufficiently constrained so as to produce a resonant interaction.

Denote spatial coordinates by $x=(x_{1},x_{2},x_{3})$ and frequency
coordinates by $\xi =(\xi _{1},\xi _{2},\xi _{3})$. Consider the following
choice for $f$: 
\begin{equation*}
\hat{f}(\xi )=\Big(N^{\beta /2}\mathbf{1}_{(0,N^{-\beta })}(\xi
_{1})+N^{-\beta(q-\frac12)}\mathbf{1}_{(N^{\beta },N^{\beta }+N^{-\beta
})}(\xi _{1})\Big)\mathbf{1}_{(0,1)}(\xi _{2})\mathbf{1}_{(0,1)}(\xi _{3})
\end{equation*}%
so that $\Vert f\Vert _{H^{q}}\sim O(1)$. The reason for choosing intervals
of width $N^{-\beta }$ is that $N^{2\beta }\leq (N^{\beta }+\delta N^{-\beta
})^{2}\leq N^{2\beta }+O(1)$ for $0\leq \delta \leq 1$ -- that is, we have $%
O(1)$ resolution of the square. With this choice, 
\begin{align*}
\hspace{0.3in}& \hspace{-0.3in}(\hat{\phi}(t^{\prime })\ast \hat{\bar{\phi}}%
(t^{\prime }))(\eta ) =\int \hat{\phi}(\eta -\rho ,t^{\prime })\hat{\bar{\phi%
}}(\eta ,t^{\prime })\,d\rho \\
& =\int_{\rho }e^{-it^{\prime}(\eta-\rho)^2}e^{it^{\prime}\rho^2}\hat{f}%
(\eta -\rho )\hat{f}(\rho )\,d\rho \\
& \sim e^{it^{\prime }\cdot O(1)}\Big(\mathbf{1}_{(0,N^{-\beta })}(\eta
_{1})+N^{-q\beta }(e^{-it^{\prime} N^{2\beta} }+e^{it^{\prime}N^{2\beta}})%
\mathbf{1}_{(N^{\beta },N^{\beta }+N^{-\beta })}(\eta _{1}) \\
& \qquad \qquad +N^{-2q\beta }\mathbf{1}_{(2N^{\beta },2N^{\beta }+N^{-\beta
})}(\eta _{1})\Big)\mathbf{1}_{(0,1)}(\eta _{2})\mathbf{1}_{(0,1)}(\eta _{3})
\end{align*}%
Applying $W_{N}*$ basically removes the first term $\mathbf{1}_{(0,N^{-\beta
})}(\eta _{1})$, 
\begin{align*}
\hspace{0.3in}& \hspace{-0.3in} [W_N * |\phi(t^{\prime})|^2]\widehat{\;}%
(\eta ) \\
& \sim e^{it^{\prime }\cdot O(1)}\Big(N^{-q\beta }(e^{-it^{\prime}
N^{2\beta} }+e^{it^{\prime}N^{2\beta}})\mathbf{1}_{(N^{\beta },N^{\beta
}+N^{-\beta })}(\eta _{1}) \\
& \qquad \qquad +N^{-2q\beta }\mathbf{1}_{(2N^{\beta },2N^{\beta }+N^{-\beta
})}(\eta _{1})\Big)\mathbf{1}_{(0,1)}(\eta _{2})\mathbf{1}_{(0,1)}(\eta _{3})
\end{align*}%
Now consider 
\begin{equation}  \label{E:opt-01}
[ (W_N*|\phi(t^{\prime})|^2) \phi(t^{\prime}) ]\widehat{\;}(\xi)= \int_\eta
(W_N*|\phi(t^{\prime})|^2)\widehat{\;}(\eta) \widehat{\phi(t^{\prime})}%
(\xi-\eta) \, d\eta
\end{equation}
We have $\widehat{\phi (t^{\prime })}(\xi -\eta
)=e^{-it^{\prime}(\xi-\eta)^2 }\hat{f}(\xi -\eta )$. In the product inside
the integrand in \eqref{E:opt-01}, we have either 
\begin{equation*}
N^{\beta }\leq \eta _{1}\leq N^{\beta }+N^{-\beta }\,,\quad \text{or}\quad
N^{2\beta }\leq \eta _{1}\leq N^{2\beta }+N^{-\beta }
\end{equation*}%
and 
\begin{equation*}
0\leq \xi _{1}-\eta _{1}\leq N^{-\beta }\,,\quad \text{or}\quad N^{\beta
}\leq \xi _{1}-\eta _{1}\leq N^{\beta }+N^{-\beta }
\end{equation*}%
Thus, in the product, there are four terms resulting from all possible cross
pairings.

The dominant term of interest for us (that produces the lower bound) will
arise from the case when $N^{\beta }\leq \eta _{1}\leq N^{\beta }+N^{-\beta
} $ pairs with $0\leq \xi _{1}-\eta _{1}\leq N^{-\beta }$, (so that the
output frequency $\xi $ satisfies $N^{\beta }\leq \xi _{1}\leq N^{\beta
}+N^{-\beta }$). The result is 
\begin{eqnarray}
&&\Big(\lbrack W_{N}\ast |\phi (t^{\prime })|^{2}]\phi (t^{\prime })\Big)%
\widehat{\;}(\xi )  \label{E:opt-02} \\
&\sim &e^{it^{\prime }\cdot O(1)}N^{-\beta (q+\frac{1}{2})}(e^{-it^{\prime
}N^{2\beta }}+e^{it^{\prime }N^{2\beta }})\mathbf{1}_{(N^{\beta },N^{\beta
}+N^{-\beta })}(\xi _{1})\mathbf{1}_{(0,1)}(\xi _{2})\mathbf{1}_{(0,1)}(\xi
_{3})  \notag
\end{eqnarray}%
where we have dropped the terms that will have subordinate effect. The
coefficient $N^{-\beta (q+\frac{1}{2})}$ comes from the product of three
things: the $N^{-q\beta }$ coefficient in $[W_{N}\ast |\phi (t^{\prime
})|^{2}]\widehat{\;}(\eta )$, the $N^{\beta /2}$ coefficient in $\widehat{%
\phi (t^{\prime })}(\xi -\eta )$, and the size of the $\eta $-integration
support, which is $N^{-\beta }$. Finally we come to 
\begin{eqnarray*}
&&\left[ \int_{0}^{t}e^{i(t-t^{\prime })\Delta }[W_{N}\ast |\phi (t^{\prime
})|^{2}]\phi (t^{\prime })\,dt^{\prime }\right] \widehat{\;}(\xi ) \\
&=&e^{-it\xi ^{2}}\int_{0}^{t}e^{it^{\prime }\xi ^{2}}\Big(\lbrack
W_{N}|\phi (t^{\prime 2}]\phi (t^{\prime })\Big)\widehat{\;}(\xi
)\,dt^{\prime }
\end{eqnarray*}%
Plugging in the term above, it is key to notice that in the phase product
inside the integrand 
\begin{equation*}
e^{-it^{\prime }\xi ^{2}}(e^{-it^{\prime }N^{2\beta }}+e^{it^{\prime
}N^{2\beta }})=(e^{-2it^{\prime }N^{2\beta }}+1)
\end{equation*}%
so there is a non-oscillatory (resonant) component. Thus, when the time
integral is carried out, this term survives, and gives 
\begin{equation*}
\sim N^{-\beta (q+\frac{1}{2})}\mathbf{1}_{(N^{\beta },N^{\beta }+N^{-\beta
})}(\xi _{1})\mathbf{1}_{(0,1)}(\xi _{2})\mathbf{1}_{(0,1)}(\xi _{3})
\end{equation*}%
The $H^{1}$ norm of this term is $\sim N^{-q\beta }$. As other terms are
subordinate, this term contributing $N^{-q\beta }$ to the norm becomes a
lower bound.
\end{proof}

\appendix

\section{Misc. {Estimates}}

\subsection{Collapsing Estimates and Strichartz Estimates}

We use the original Klainerman-Machedon collapsing estimate as our iterating
estimate in this paper.

\begin{lemma}[\protect\cite{KM,TCNP5,Chen3DDerivation}]
\label{Lemma:H^1K-M Estimate}\footnote{%
For more estimates of this type, see \cite%
{ChenDie,ChenAnisotropic,GSS,GM,KSS}.}There is a $C$ independent of $V,j,k$,
and $N$ such that, (for $f^{(k+1)}(\mathbf{x}_{k+1},\mathbf{x}_{k+1}^{\prime
})$ independent of $t$) 
\begin{equation*}
\left\Vert S^{(1,k)}B_{N,j,k+1}U^{(k+1)}(t)f^{(k+1)}\right\Vert
_{L_{t}^{2}L_{\mathbf{x},\mathbf{x}^{\prime }}^{2}}\leqslant C\left\Vert
V\right\Vert _{L^{1}}\left\Vert S^{(1,k+1)}f^{(k+1)}\right\Vert _{L_{\mathbf{%
x},\mathbf{x}^{\prime }}^{2}}.
\end{equation*}
\end{lemma}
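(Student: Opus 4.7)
The plan is to reduce this to the classical collapsing estimate of Klainerman-Machedon, which I would carry out as follows. First, by symmetry among the indices $\{1,\dots,k\}$ it suffices to prove the bound for $j=1$. Second, since $B_{N,1,k+1} g^{(k+1)} = \operatorname{Tr}_{k+1}[V_N(x_1-x_{k+1}) g^{(k+1)} - g^{(k+1)} V_N(x_1'-x_{k+1})]$, the triangle inequality reduces matters to estimating a single ``half'' operator of the form
$$T f^{(k+1)}(t,\mathbf{x}_k,\mathbf{x}_k') = S^{(1,k)} \int V_N(x_1-y)\, \bigl[U^{(k+1)}(t) f^{(k+1)}\bigr](\mathbf{x}_k,y;\mathbf{x}_k',y)\, dy,$$
since the other half is handled identically.

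Next I would take the full space-time Fourier transform. The Schrödinger propagator $U^{(k+1)}(t)$ turns into the phase $e^{it(|\boldsymbol{\xi}_{k+1}|^2 - |\boldsymbol{\xi}_{k+1}'|^2)}$, and integration in $t$ produces a delta function
$$\delta\bigl(\tau - |\boldsymbol{\xi}_k|^2 + |\boldsymbol{\xi}_k'|^2 - |q|^2 + |q'|^2\bigr),$$
where $(q,q')$ are the Fourier duals of the two copies of $y$ coming from the unprimed/primed sides of the kernel. The trace in $y$ imposes that the two $y$-frequencies differ by the Fourier frequency of $V_N$, producing the multiplier $\widehat{V_N}(\xi_1-\xi_1'-q+q')$. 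Since $\widehat{V_N}(\eta)=\widehat V(\eta/N^\beta)$, this multiplier is bounded uniformly in $N$ by $\|V\|_{L^1}$ --- the ``collapsing'' mechanism that eliminates the $N^\beta$-concentration of $V_N$.

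Applying Plancherel in $(\tau,\mathbf{x}_k,\mathbf{x}_k')$ and Cauchy-Schwarz in $(q,q')$, pulling the weight $\prod_{j=1}^{k}\langle\xi_j\rangle\langle\xi_j'\rangle$ together with $\langle q\rangle\langle q'\rangle$ onto the $|\widehat{f^{(k+1)}}|^2$ side so as to reconstruct $S^{(1,k+1)}f^{(k+1)}$, the estimate reduces to the uniform bound
$$\sup_{\tau,\boldsymbol{\xi}_k,\boldsymbol{\xi}_k'} \int \frac{\delta\bigl(\tau - |\boldsymbol{\xi}_k|^2 + |\boldsymbol{\xi}_k'|^2 - |q|^2 + |q'|^2\bigr)}{\langle q\rangle^2\, \langle q'\rangle^2}\, dq\, dq' \leq C.$$
Performing the $\delta$-integration confines $q\in\mathbb R^3$ to a sphere centered at the origin of radius determined by the remaining variables, and the surface measure combined with $\langle q\rangle^{-2}$ gives a bound uniform in the radius; the outer $dq'$ integral then converges thanks to $\langle q'\rangle^{-2}$ in $\mathbb R^3$.

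The main obstacle is precisely this measure-theoretic step: the convergence of the $q'$-integral is borderline in dimension three and is the reason we need the full $H^1$ weight on \emph{both} copies of the traced $(k{+}1)$-th variable on the right-hand side. Any loss of derivative here would force a logarithmic or power divergence and wreck the $C$ constant. Combining the Cauchy-Schwarz step with the displayed supremum bound yields $\|T f^{(k+1)}\|_{L^2_tL^2_{x,x'}}\leq C\|V\|_{L^1}\|S^{(1,k+1)}f^{(k+1)}\|_{L^2}$, and summing the two halves of $B_{N,1,k+1}$ completes the proof.
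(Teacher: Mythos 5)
The paper itself does not prove this lemma; it quotes it from Klainerman--Machedon and its adaptations \cite{KM,TCNP5,Chen3DDerivation}, and your overall strategy --- reduce to $j=1$ and one half of the commutator, take the space-time Fourier transform so the propagator yields a delta function on the resonance surface, bound the potential multiplier by $|\widehat{V_N}|\leq \|V\|_{L^1}$, and then Cauchy--Schwarz in the convolution variables to reduce to a uniform bound on a weighted delta integral --- is indeed the standard route those papers follow. The problem is that your execution of the decisive step is wrong in two ways. First, you drop the frequency shift created by the convolution with $V_N$: if $(q,q')$ are the frequencies of the traced variable, the \emph{input} frequency of the collapsed coordinate is $\xi_1-q+q'$, not $\xi_1$. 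Hence the weight $\langle\xi_1\rangle$ coming from $S^{(1,k)}$ cannot simply be ``pulled onto the $|\widehat{f^{(k+1)}}|^2$ side to reconstruct $S^{(1,k+1)}f^{(k+1)}$''; the ratio $\langle\xi_1\rangle/\langle\xi_1-q+q'\rangle$ must remain in the multiplier, and the phase inside the delta must read $|\xi_1-q+q'|^2+|q|^2-|\xi_1'|^2-|q'|^2$, not $|\xi_1|^2-|q|^2+\cdots$. The quantity one actually has to control is
\begin{equation*}
\sup_{\tau,\xi_1,\xi_1'}\int \frac{\langle\xi_1\rangle^{2}\,\delta\bigl(\tau+|\xi_1-q+q'|^{2}+|q|^{2}-|\xi_1'|^{2}-|q'|^{2}\bigr)}{\langle\xi_1-q+q'\rangle^{2}\,\langle q\rangle^{2}\,\langle q'\rangle^{2}}\,dq\,dq'.
\end{equation*}
Second, the simplified integral you do display is not merely ``borderline'': it diverges. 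Integrating out the delta in $q$ over the sphere of radius $r=(a+|q'|^2)^{1/2}$ produces a factor comparable to $r/\langle r\rangle^{2}\sim |q'|^{-1}$ for large $|q'|$, so the remaining integral behaves like $\int_{\mathbb{R}^3}\langle q'\rangle^{-2}|q'|^{-1}\,dq'\sim\int^{\infty}dr/r$, a logarithmic divergence. Thus the uniform constant $C$ you assert for your reduced integral does not exist, and the proof collapses at its central step.

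The uniform boundedness of the \emph{correct} weighted integral above is precisely the technical heart of the Klainerman--Machedon estimate: one must use the numerator $\langle\xi_1\rangle^{2}$ together with the delta constraint, splitting into regions according to which of $|\xi_1-q+q'|$, $|q|$, $|q'|$ is comparable to $|\xi_1|$ and carrying out the sphere integrations with care; a naive surface-measure count of the type you invoke is exactly what fails (as your own divergent integral shows). So, while your outline matches the cited proof, as written there is a genuine gap: the key multiplier bound is misidentified and the bound you claim for it is false, so the lemma is not established without importing the full Klainerman--Machedon analysis (or an alternative proof via bilinear/Strichartz-type estimates).
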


To explore the time derivative gain by Duhamel type terms, we also need the $%
X_{s,b}$ version of Lemma \ref{Lemma:H^1K-M Estimate}. As we are using $%
S^{(k)}$ to denote the space derivatives, we surpress the $s$ notation in
definition of the $X_{s,b}$ space and define the norm $X_{b}^{(k)}$ by 
\begin{equation*}
\Vert \alpha ^{(k)}\Vert _{X_{b}^{(k)}}=\left( \int \langle \tau +\left\vert 
\mathbf{\xi }_{k}\right\vert ^{2}-\left\vert \mathbf{\xi }_{k}^{\prime
}\right\vert ^{2}\rangle ^{2b}\left\vert \hat{\alpha}^{(k)}(\tau ,\mathbf{%
\xi }_{k},\mathbf{\xi }_{k}^{\prime })\right\vert ^{2}\,d\tau \,d\mathbf{\xi 
}_{k}\,d\mathbf{\xi }_{k}^{\prime }\right) ^{1/2}
\end{equation*}%
which is essentially a $X_{0,b}$ norm. We then have the Duhamel
time-derivative gain property and the $X_{s,b}$ version of Lemma \ref%
{Lemma:H^1K-M Estimate}.

\begin{claim}[\protect\cite{C-H2/3}]
\label{Claim:b to b-1}Let $\frac{1}{2}<b<1$ and $\theta (t)$ be a smooth
cutoff. Then 
\begin{equation}
\left\Vert \theta (t)\int_{0}^{t}U^{(k)}(t-s)\beta ^{(k)}(s)\,ds\right\Vert
_{X_{b}^{(k)}}\lesssim \Vert \beta ^{(k)}\Vert _{X_{b-1}^{(k)}}
\label{E:X-1}
\end{equation}
\end{claim}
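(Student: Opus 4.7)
The plan is to reduce the claim to a purely one-dimensional (in time) Sobolev estimate by first stripping away the spatial structure. The operator $U^{(k)}(t) = e^{it\Delta_{\mathbf{x}_k}}e^{-it\Delta_{\mathbf{x}_k'}}$ is unitary on $L^2_{\mathbf{x}_k,\mathbf{x}_k'}$, and the $X^{(k)}_b$ norm is, by its very definition through $\langle \tau + |\xi_k|^2 - |\xi_k'|^2\rangle$, simply the $H^b_t L^2_{x,x'}$ norm of $U^{(k)}(-t)\alpha^{(k)}(t)$. I would therefore set $\tilde{\beta}(t) := U^{(k)}(-t)\beta^{(k)}(t)$ and observe
$$
U^{(k)}(-t)\int_0^t U^{(k)}(t-s)\beta^{(k)}(s)\,ds \;=\; \int_0^t \tilde{\beta}(s)\,ds,
$$
so the claim is equivalent to
$$
\left\|\theta(t)\int_0^t \tilde{\beta}(s)\,ds\right\|_{H^b_t L^2_{\mathbf{x}_k,\mathbf{x}_k'}} \lesssim \|\tilde{\beta}\|_{H^{b-1}_t L^2_{\mathbf{x}_k,\mathbf{x}_k'}}.
$$
Since the $H^b_t L^2_{x,x'}$ norm is just an $L^2_{x,x'}$ superposition of $H^b_t$ norms (take Plancherel in $(\mathbf{x}_k,\mathbf{x}_k')$ or apply Minkowski), it suffices to prove the scalar estimate $\bigl\|\theta(t)\int_0^t f(s)\,ds\bigr\|_{H^b_t(\mathbb{R})} \lesssim \|f\|_{H^{b-1}_t(\mathbb{R})}$ for $\tfrac12<b<1$ and $\theta\in C_c^\infty(\mathbb{R})$, uniformly in the input.

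For the scalar estimate I would use the standard Bourgain $X^{s,b}$ toolkit. Write $\int_0^t f(s)\,ds = (H*f)(t) - \int_{-\infty}^0 f(s)\,ds$, where $H$ is the Heaviside function; the second piece is a constant in $t$, so multiplication by the compactly supported $\theta$ produces a Schwartz function whose $H^b_t$ norm is trivially controlled by $\|f\|_{H^{b-1}_t}$ (via duality against a $\theta$-dependent test function). For the main term $\theta(t)(H*f)(t)$, take the Fourier transform in $t$: the time-Fourier transform of $H*f$ is (in the distributional sense) $\widehat{f}(\tau)/(i\tau)$ plus a multiple of $\delta_0$. Multiplication by $\theta$ corresponds to convolution with the Schwartz function $\widehat{\theta}$, so the required bound becomes
$$
\left\|\widehat{\theta}*\left(\tfrac{\widehat{f}(\tau)}{i\tau}\right)\right\|_{L^2_\tau(\langle\tau\rangle^{2b})} \lesssim \|\widehat{f}\|_{L^2_\tau(\langle\tau\rangle^{2(b-1)})}.
$$
Away from $\tau=0$ one simply uses $\langle\tau\rangle^b |\tau|^{-1} \lesssim \langle\tau\rangle^{b-1}$ combined with Young's inequality for $\widehat{\theta}$. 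The only subtlety, which is the main obstacle, is the local non-integrability of $|\tau|^{-1}$ near $0$: since $b>\tfrac12$, one cannot simply point\-wise-bound $\langle\tau\rangle^b/|\tau|$. This is handled classically by exploiting the smoothness of $\theta$: write $\theta(t)(H*f)(t) = \theta(t)(H*f)(t) - \theta(0)(H*f)(0) + \theta(0)(H*f)(0)$, or equivalently expand $\widehat{\theta}(\tau-\sigma) = \widehat{\theta}(\tau) + O(\sigma\langle\tau\rangle^{-N})$ so that the delta-at-zero cancels against the $\sigma^{-1}$ singularity. This is exactly the mechanism behind the standard time-localization lemma of Bourgain (see e.g.\ Tao's monograph, Lemma 2.11), which gives the required estimate for any $\tfrac12<b<1$.

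The argument is essentially reproduction of a well-known lemma in $X^{s,b}$ theory; the only thing that needs to be verified for the present multi-particle setting is that conjugating by $U^{(k)}$ correctly identifies the $X^{(k)}_b$ norm with the time-Sobolev norm of the conjugated profile, which is immediate from the definition. I do not anticipate any further obstacle beyond the classical $\tau \to 0$ singularity analysis described above.
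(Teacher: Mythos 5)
The paper itself gives no proof of this claim --- it is quoted from \cite{C-H2/3}, where it is the standard Bourgain-type time-localization estimate --- and your overall route is exactly that standard argument: conjugate by $U^{(k)}(-t)$ so that, after Plancherel in $(\mathbf{x}_k,\mathbf{x}_k')$, the $X_b^{(k)}$ norm is the $H^b_t$ norm at each fixed frequency pair, and then invoke the scalar estimate $\Vert\theta(t)\int_0^t f\,ds\Vert_{H^b_t}\lesssim\Vert f\Vert_{H^{b-1}_t}$ for $\tfrac12<b<1$, which is precisely the admissible range $b-1\in(-\tfrac12,0)$ of the classical lemma. That reduction is correct and is what the cited reference relies on.

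One intermediate step as you wrote it would fail, however. You split $\int_0^t f(s)\,ds=(H*f)(t)-\int_{-\infty}^0 f(s)\,ds$ and assert that the constant piece is ``trivially controlled by $\Vert f\Vert_{H^{b-1}_t}$ via duality.'' It is not: $\int_{-\infty}^0 f\,ds$ is the pairing of $f$ with $\mathbf{1}_{(-\infty,0)}$, which does not belong to $H^{1-b}_t$, and indeed taking $\hat f=\epsilon^{-1/2}\mathbf{1}_{(-\epsilon,\epsilon)}$ gives $\Vert f\Vert_{H^{b-1}_t}\sim 1$ while $\int_{-\infty}^0 f\,ds\sim\epsilon^{-1/2}\to\infty$. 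The two pieces can only be separated on the region $|\tau|\gtrsim 1$; near $\tau=0$ they must be kept together so that the cancellation in $(e^{it\tau}-1)/(i\tau)$ (Taylor expansion in $t\tau$, the powers of $t$ being absorbed by $t^k\theta(t)$) kills the singularity. This is exactly the mechanism you then describe as ``equivalent'' (the expansion $\hat\theta(\tau-\sigma)=\hat\theta(\tau)+O(|\sigma|\langle\tau\rangle^{-N})$) and is the content of the standard lemma you cite, so the proof goes through once you perform the low/high frequency split first and drop the global splitting-off of the constant; the two formulations are not interchangeable. Finally, since $H*f$ and $\delta_0\cdot\hat f$ are not defined for general $f\in H^{b-1}_t$ with $b-1<0$, carry out the computation for Schwartz $f$ and conclude by density.
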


\begin{lemma}[\protect\cite{C-H2/3}]
\label{Lemma:KMEstimateInWithX_b}There is a $C$ independent of $j,k$, and $N$
such that (for $\alpha ^{(k+1)}(t,\mathbf{x}_{k+1},\mathbf{x}_{k+1})$
dependent on $t$) 
\begin{equation*}
\Vert S^{(1,k)}B_{N,j,k+1}\alpha ^{(k+1)}\Vert _{L_{t}^{2}L_{\mathbf{x},%
\mathbf{x}^{\prime }}^{2}}\leqslant C\Vert S^{(1,k+1)}\alpha ^{(k+1)}\Vert
_{X_{\frac{1}{2}+}^{(k+1)}}
\end{equation*}
\end{lemma}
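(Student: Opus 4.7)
The plan is to reduce the inhomogeneous (time-dependent) estimate to the homogeneous one (Lemma \ref{Lemma:H^1K-M Estimate}) via the standard $X_{s,b}$ transference principle. The key observation is that any function $\alpha^{(k+1)}(t)$ with finite $X^{(k+1)}_{1/2+}$ norm can be written as a superposition (in a modulation parameter $\tau$) of free evolutions, to which the time-independent collapsing estimate applies directly. The weight $\langle\tau\rangle^{1/2+}$ in the $X^{(k+1)}_{1/2+}$ norm is precisely what is needed to perform the $\tau$-integration by Cauchy--Schwarz.

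Concretely, I would introduce $\beta^{(k+1)}(t) = U^{(k+1)}(-t)\alpha^{(k+1)}(t)$, so that the spatial Fourier transform of $\beta^{(k+1)}$ in $(\mathbf{x}_{k+1},\mathbf{x}'_{k+1})$ together with the time Fourier transform in $t$ satisfies
\[
\widehat{\beta^{(k+1)}}(\tau,\boldsymbol{\xi}_{k+1},\boldsymbol{\xi}'_{k+1}) = \widehat{\alpha^{(k+1)}}\bigl(\tau+|\boldsymbol{\xi}_{k+1}|^2-|\boldsymbol{\xi}'_{k+1}|^2,\,\boldsymbol{\xi}_{k+1},\boldsymbol{\xi}'_{k+1}\bigr),
\]
so $\|\alpha^{(k+1)}\|_{X^{(k+1)}_{1/2+}} = \|\langle\tau\rangle^{1/2+}\widehat{\beta^{(k+1)}}\|_{L^2_\tau L^2_{\xi,\xi'}}$. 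Inverting the time Fourier transform yields
\[
\alpha^{(k+1)}(t) = \int_{\mathbb{R}} e^{it\tau}\, U^{(k+1)}(t)\, f_\tau^{(k+1)} \, d\tau,
\]
where $f_\tau^{(k+1)}$ is the time-independent kernel whose spatial Fourier transform equals $\widehat{\beta^{(k+1)}}(\tau,\cdot,\cdot)$.

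Now apply $S^{(1,k)}B_{N,j,k+1}$; since both operators act only on spatial variables and commute with multiplication by $e^{it\tau}$, Minkowski's inequality in $\tau$ inside $L^2_t L^2_{x,x'}$ gives
\[
\|S^{(1,k)}B_{N,j,k+1}\alpha^{(k+1)}\|_{L^2_tL^2_{x,x'}} \le \int_{\mathbb{R}} \|S^{(1,k)}B_{N,j,k+1}U^{(k+1)}(t)f_\tau^{(k+1)}\|_{L^2_tL^2_{x,x'}}\,d\tau.
\]
For each fixed $\tau$, Lemma \ref{Lemma:H^1K-M Estimate} (applied to the time-independent kernel $f_\tau^{(k+1)}$) bounds the integrand by $C\|S^{(1,k+1)}f_\tau^{(k+1)}\|_{L^2_{x,x'}}$. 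Then Cauchy--Schwarz in $\tau$ with the weight $\langle\tau\rangle^{1/2+}$ yields
\[
\int \|S^{(1,k+1)}f_\tau^{(k+1)}\|_{L^2_{x,x'}}\,d\tau \le \Bigl(\int\langle\tau\rangle^{-1-}\,d\tau\Bigr)^{1/2}\Bigl(\int\langle\tau\rangle^{1+}\|S^{(1,k+1)}f_\tau^{(k+1)}\|_{L^2_{x,x'}}^2\,d\tau\Bigr)^{1/2},
\]
and the first factor is finite while the second equals $\|S^{(1,k+1)}\alpha^{(k+1)}\|_{X^{(k+1)}_{1/2+}}$ by the relation between $\beta^{(k+1)}$ and $\alpha^{(k+1)}$ above. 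Combining gives the claim.

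There is no real obstacle here: this is the standard transference principle, and the bound is independent of $j,k,N$ because the underlying Lemma \ref{Lemma:H^1K-M Estimate} is. The only small points requiring care are (i) justifying the use of Fubini/Minkowski to exchange the $d\tau$ integral with the $L^2_tL^2_{x,x'}$ norm, which is legitimate as soon as the right-hand side of the claimed bound is finite (a density argument in Schwartz class handles the general case by approximation), and (ii) checking that $S^{(1,k+1)}$ and $B_{N,j,k+1}$ commute with the multiplication-by-$e^{it\tau}$ factor, which is immediate since these operators involve only the spatial variables.
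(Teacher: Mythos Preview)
Your argument is correct and is exactly the standard $X_{s,b}$ transference principle: decompose $\alpha^{(k+1)}$ as a $\tau$-superposition of free evolutions, apply the homogeneous collapsing estimate (Lemma~\ref{Lemma:H^1K-M Estimate}) pointwise in $\tau$, then Cauchy--Schwarz in $\tau$ using the $\langle\tau\rangle^{1/2+}$ weight. The paper itself does not give a proof of this lemma---it simply cites \cite{C-H2/3}---so there is no in-paper argument to compare against, but your proof is precisely the kind of argument one finds in that reference, and nothing is missing.
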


In the above notation, the dual Strichartz estimates we need in this paper
are the following:

\begin{lemma}[\protect\cite{C-H2/3}]
\label{Lemma:PP Estimate in Strichartz form}Let 
\begin{equation*}
\beta ^{(k)}(t,\mathbf{x}_{k},\mathbf{x}_{k}^{\prime })=N^{3\beta
-1}V(N^{\beta }(x_{i}-x_{j}))\gamma ^{(k)}(t,\mathbf{x}_{k},\mathbf{x}%
_{k}^{\prime })
\end{equation*}%
Then for $N\geq 1$, we have 
\begin{equation}
\Vert \left\vert \nabla _{x_{i}}\right\vert \left\vert \nabla
_{x_{j}}\right\vert \beta ^{(k)}\Vert _{X_{-\frac{1}{2}+}^{(k)}}\lesssim N^{%
\frac{5}{2}\beta -1}\Vert \langle \nabla _{x_{i}}\rangle \langle \nabla
_{x_{j}}\rangle \gamma ^{(k)}\Vert _{L_{t}^{2}L_{\mathbf{x}\mathbf{x}%
^{\prime }}^{2}}  \label{E:StrCor1}
\end{equation}%
and 
\begin{equation}
\Vert \beta ^{(k)}\Vert _{X_{-\frac{1}{2}+}^{(k)}}\lesssim N^{\frac{1}{2}%
\beta -1}\Vert \langle \nabla _{x_{i}}\rangle \langle \nabla _{x_{j}}\rangle
\gamma ^{(k)}\Vert _{L_{t}^{2}L_{\mathbf{x}\mathbf{x}^{\prime }}^{2}}.
\label{E:StrCor2}
\end{equation}
\end{lemma}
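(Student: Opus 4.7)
The plan is to prove both bounds by duality against $X_{1/2-}^{(k)}$, using that $U^{(k)}=e^{it\Delta_{\mathbf{x}_k}}e^{-it\Delta_{\mathbf{x}_k'}}$ factors as a tensor product of 3D Schr\"odinger groups so that 3D Strichartz estimates apply independently in each 3-dimensional slot. For any 3D-admissible pair $(q,r)$, this gives the embedding $X_{1/2+}^{(k)}\hookrightarrow L_t^q L_{x_i}^r L_{\text{else}}^2$; dualizing yields $\|h\|_{X_{-1/2+}^{(k)}}\lesssim \|h\|_{L_t^2 L_{x_i}^{6/5} L_{\text{else}}^2}$ at the endpoint $(q,r)=(2,6)$, with the usual $\epsilon$-loss in $b$ absorbed into the $+$ in $-1/2+$. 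A symmetric estimate holds with $x_i$ and $x_j$ interchanged, and tensorizing these gives access to Sobolev embedding in both variables simultaneously.

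For \eqref{E:StrCor2}, I would apply H\"older's inequality in $x_i$ to the product $\tfrac{1}{N}V_N(x_i-x_j)\gamma^{(k)}$. Using the scaling $\|V_N\|_{L^p(\mathbb{R}^3)}=N^{3\beta(1-1/p)}\|V\|_{L^p}$, placing $V_N$ in the sharp space $L^{6/5}_{x_i}$ produces the factor $N^{\beta/2}$. The matching $\gamma^{(k)}$ factor would be $L^\infty_{x_i}$-type, which is too strong for single-variable 3D Sobolev from $H^1$; the fix is to use both $\langle\nabla_{x_i}\rangle$ and $\langle\nabla_{x_j}\rangle$ on the right-hand side jointly, via a bilinear Strichartz estimate in the spirit of Lemma \ref{L:bil-Str} or a tensorized Sobolev embedding $H^1_{x_i}H^1_{x_j}\hookrightarrow L^6_{x_i}L^6_{x_j}$ combined with a Minkowski-type interchange of the $L^{6/5}_{x_i}$ and $L^2_{x_j}$ norms. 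Together with the $N^{-1}$ prefactor in $\beta^{(k)}$, this produces the claimed $N^{\beta/2-1}$.

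For \eqref{E:StrCor1}, I would apply the Leibniz rule to distribute $|\nabla_{x_i}||\nabla_{x_j}|$ across $V_N(x_i-x_j)\gamma^{(k)}$, producing four terms depending on how the two derivatives are assigned between $V_N$ and $\gamma^{(k)}$. Each derivative landing on $V_N$ contributes an extra factor of $N^\beta$ (so that $\|\nabla^2 V_N\|_{L^{6/5}_{x_i}}\sim N^{5\beta/2}$ in the worst case), while derivatives on $\gamma^{(k)}$ are absorbed by the corresponding factors of $\langle\nabla_{x_i}\rangle\langle\nabla_{x_j}\rangle$ on the right. The worst term, with both derivatives on $V_N$, yields the claimed prefactor $N^{5\beta/2-1}$ via the same dual-Strichartz/Sobolev scheme as in \eqref{E:StrCor2}; the three mixed terms give comparable or smaller contributions.

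The main obstacle is the sharp exponent tracking in the H\"older step: a naive single-variable argument in $x_i$ alone yields only the weaker bounds $N^{\beta-1}$ and $N^{3\beta-1}$, because it forces $V_N$ into $L^{3/2}_{x_i}$ rather than the sharper $L^{6/5}_{x_i}$. Recovering the extra $N^{-\beta/2}$ improvement needed to reach the sharp exponents is precisely what requires the joint use of both $\langle\nabla_{x_i}\rangle$ and $\langle\nabla_{x_j}\rangle$ on the right-hand side, via a tensorized Sobolev embedding or bilinear Strichartz input so that $\gamma^{(k)}$ can be placed in a mixed Lebesgue norm (such as $L^6_{x_i}L^6_{x_j}$) matching the sharper $L^{6/5}$ norm of $V_N$.
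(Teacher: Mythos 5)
You are proposing a proof for a statement the paper itself does not prove (it is quoted from \cite{C-H2/3}), so the proposal must stand on its own, and it has a genuine gap at its central step. The obstruction is structural: $V_N(x_i-x_j)$ depends only on the difference variable, so its smallness in $L^{6/5}$ can be cashed in over only one three-dimensional integration; it cannot be H\"oldered simultaneously against improved integrability of $\gamma^{(k)}$ in both $x_i$ and $x_j$. Concretely, the dual of the Hilbert-space-valued endpoint Strichartz estimate is the norm $L_t^2L_{x_i}^{6/5}L^2_{\mathrm{rest}}$ with the $L^2$ over the remaining variables \emph{innermost}, exactly as you wrote it, and the inequality your scheme needs, $\Vert V_N(x_i-x_j)\gamma^{(k)}\Vert_{L_t^2L^{6/5}_{x_i}L^2_{\mathrm{rest}}}\lesssim \Vert V_N\Vert_{L^{6/5}}\Vert\langle\nabla_{x_i}\rangle\langle\nabla_{x_j}\rangle\gamma^{(k)}\Vert_{L^2}$, is false: for a smooth tensor product of bumps not vanishing on the diagonal, the inner $L^2_{x_j}$ integration already produces $\Vert V_N\Vert_{L^2}\sim N^{3\beta/2}$ near $x_j=x_i$, so the left side is of size $N^{3\beta/2}$ while the right side is $N^{\beta/2}$. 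Thus once the $X^{(k)}_{-\frac12+}$ norm is dominated by that Lebesgue norm of $\beta^{(k)}$, the estimate is already lost by a factor $N^{\beta}$: no arrangement of H\"older, tensorized Sobolev $H^1_{x_i}H^1_{x_j}\hookrightarrow L^6_{x_i}L^6_{x_j}$, or Minkowski afterwards can recover \eqref{E:StrCor2} or \eqref{E:StrCor1}. Even your stated baseline $N^{\beta-1}$ is not reachable along this route: extracting it (let alone $\Vert V_N\Vert_{L^{6/5}}\sim N^{\beta/2}$) would require performing the $L^{6/5}_{x_i}$ integration \emph{inside} the $L^2$ of the other variables, i.e.\ either a Minkowski interchange that goes in the wrong direction or a reversed-nesting Strichartz estimate $\Vert U^{(k)}f\Vert_{L^2_tL^2_{\mathrm{rest}}L^6_{x_i}}\lesssim\Vert f\Vert_{L^2}$, which does not follow from the vector-valued Keel--Tao bound for the product flow.

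What actually produces the sharp powers is the dispersive bilinear structure rather than Lebesgue duality plus H\"older: note that $N^{\beta/2-1}\sim\frac1N\Vert V_N\Vert_{H^{-1}}$ and $N^{5\beta/2-1}\sim\frac1N\Vert V_N\Vert_{H^{1}}$, which is the signature of pairing $V_N$ in $H^{\mp1}$ of the relative variable against a collapsing/trace-type bound for the dual test function in $X^{(k)}_{\frac12-}$ --- i.e.\ the Klainerman--Machedon-type frequency-side argument with the modulation weight $\langle\tau+|\mathbf{\xi}_k|^2-|\mathbf{\xi}_k'|^2\rangle$, in the spirit of Lemma \ref{Lemma:H^1K-M Estimate}, which is how the cited source \cite{C-H2/3} operates, with $\widehat{V_N}$ entering through weighted $L^2_\eta$ norms instead of $\Vert V_N\Vert_{L^{6/5}}$. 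Your passing mention of bilinear Strichartz (Lemma \ref{L:bil-Str}) is the right instinct --- it encodes the same half-derivative gain --- but the proposal does not develop it, and the route you do develop fails at the step above. The numerology $\Vert V_N\Vert_{L^{6/5}}\sim N^{\beta/2}$, $\Vert|\nabla|^2V_N\Vert_{L^{6/5}}\sim N^{5\beta/2}$ and the Leibniz distribution of $|\nabla_{x_i}||\nabla_{x_j}|$ for \eqref{E:StrCor1} are fine; the mechanism meant to cash them in is not.
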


\subsection{Convolution Estimates}

\begin{lemma}
\label{L:N-for-deriv} Let $W_{N}(x)=N^{3\beta }V(N^{\beta }x)-b_{0}\delta
(x) $, where $b_{0}=\int V(x)\,dx$. For any $0\leq s\leq 1$, 
\begin{equation*}
\Vert W_{N}\ast f\Vert _{L_{x}^{p}}\lesssim N^{-\beta s}\Vert D^{s}f\Vert
_{L_{x}^{p}}
\end{equation*}%
for any $1<p<\infty $. The implicit constant depends only on $\Vert \langle
x\rangle V(x)\Vert _{L^{1}}$.
\end{lemma}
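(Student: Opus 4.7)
The essential structural feature of $W_N$ is that it has mean zero: $\int W_N = \int N^{3\beta} V(N^\beta x)\,dx - b_0 = b_0 - b_0 = 0$, so $\widehat{W_N}(0)=0$. This vanishing at the origin is precisely what produces the $N^{-\beta s}$ gain. My plan is to establish the two endpoints $s=0$ and $s=1$ by direct arguments and then interpolate.

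At $s=0$, Young's inequality applied separately to the two pieces of $W_N = V_N - b_0\delta$ gives
\begin{equation*}
\|W_N\ast f\|_{L^p} \leq \|V_N\|_{L^1}\|f\|_{L^p} + b_0\|f\|_{L^p} = 2\|V\|_{L^1}\|f\|_{L^p}.
\end{equation*}
At $s=1$, I would exploit the mean-zero property to subtract $f(x)$ inside the convolution:
\begin{equation*}
(W_N\ast f)(x) = \int N^{3\beta} V(N^\beta y)\bigl[f(x-y)-f(x)\bigr]\,dy = \int V(z)\bigl[f(x-z/N^\beta)-f(x)\bigr]\,dz
\end{equation*}
after rescaling $z = N^\beta y$. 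Applying the fundamental theorem of calculus to the bracket, taking the $L^p_x$ norm, and moving it inside the $(z,t)$-integral by Minkowski's inequality, translation-invariance of $\|\cdot\|_{L^p}$ removes the $f$-dependence inside the integrand and leaves
\begin{equation*}
\|W_N\ast f\|_{L^p} \leq N^{-\beta}\,\Big\|\,|z|V(z)\Big\|_{L^1}\|\nabla f\|_{L^p}.
\end{equation*}

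For $0<s<1$, I would interpolate. Writing $Tf = W_N\ast f$, the two endpoint bounds say that the analytic family $T\circ D^{-z}$ maps $L^p\to L^p$ with norm $\lesssim \|V\|_{L^1}$ at $z=0$ and norm $\lesssim N^{-\beta}\|\,|z|V\|_{L^1}$ at $z=1$; Stein's complex interpolation theorem (valid on $L^p$ for $1<p<\infty$ by boundedness of the imaginary powers $D^{iy}$) then yields
\begin{equation*}
\|W_N\ast f\|_{L^p} \lesssim \|V\|_{L^1}^{1-s}\bigl(N^{-\beta}\|\,|z|V\|_{L^1}\bigr)^{s}\|D^s f\|_{L^p} \lesssim \|\langle x\rangle V\|_{L^1}\,N^{-\beta s}\|D^s f\|_{L^p},
\end{equation*}
since $\|\langle x\rangle V\|_{L^1}$ dominates both $\|V\|_{L^1}$ and $\|\,|x|V\|_{L^1}$. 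The argument is essentially routine; the only point requiring attention is keeping the constant expressed purely in terms of $\|\langle x\rangle V\|_{L^1}$, which is automatic from the form of the endpoint bounds.
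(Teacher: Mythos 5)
Your proof is correct, and it reaches the same constant dependence as the paper, but partly by a different route. The endpoints agree in substance: at $s=0$ both arguments are Young's inequality, and at $s=1$ your physical-space computation --- writing $(W_N\ast f)(x)=\int V(z)\,[f(x-N^{-\beta}z)-f(x)]\,dz$, applying the fundamental theorem of calculus, and using Minkowski with translation invariance --- is the real-variable counterpart of the paper's argument, which performs the same mean-zero cancellation on the Fourier side via $\widehat{W_N}(\xi)=\int_0^1 N^{-\beta}\,\xi\cdot\nabla\hat V(s\xi N^{-\beta})\,ds$ and recognizes $\nabla\hat V$ as the transform of $Y(x)=xV(x)$; both yield the factor $N^{-\beta}\|\,|x|V\|_{L^1}\|\nabla f\|_{L^p}$. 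Where you genuinely diverge is the range $0<s<1$: the paper does not interpolate abstractly, but applies the two endpoint operator bounds to each Littlewood--Paley piece, $\|W_N\ast P_M f\|_{L^p}\lesssim \min(1,N^{-\beta}M)\,M^{-s}\|D^s f\|_{L^p}$, and sums geometrically, splitting the sum at $M\sim N^{\beta}$ to produce $N^{-\beta s}$. Your alternative via Stein interpolation of the analytic family $W_N\ast D^{-z}$ is also valid for $1<p<\infty$, since the imaginary powers $D^{iy}$ are Mikhlin multipliers with polynomially growing (hence admissible) operator norms, and the resulting constant is still controlled by $\|\langle x\rangle V\|_{L^1}$; the only care needed is to set the family up on a dense class (say Schwartz functions with Fourier support away from the origin) so that $D^{-z}$ is unambiguous. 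The trade-off is that the paper's summation argument is entirely elementary and self-contained, while yours imports standard but heavier machinery; both give the same conclusion.
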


\begin{proof}
The case $s=0$ is just Young's inequality, since $\Vert V_{N}\Vert
_{L^{1}}=\Vert V\Vert _{L^{1}}<\infty $, independent of $N$. We next
establish the estimate for $s=1$. Since $\hat{V}(0)=b_{0}$, 
\begin{equation*}
\widehat{W_{N}}(\xi )=\hat{V}(\xi N^{-\beta })-b_{0}=\int_{s=0}^{s=1}\frac{d%
}{ds}\hat{V}(s\xi N^{-\beta })\,ds=\int_{s=0}^{s=1}N^{-\beta }\xi \cdot
\nabla \hat{V}(s\xi N^{-\beta })\,ds
\end{equation*}%
and thus 
\begin{equation*}
\widehat{W_{N}}(\xi )\hat{f}(\xi )=N^{-\beta }\int_{s=0}^{s=1}\nabla \hat{V}%
(s\xi N^{-\beta })\cdot \xi \hat{f}(\xi )\,ds
\end{equation*}%
Let $Y(x)=xV(x)$ so that $\hat{Y}(\xi )=\nabla \hat{V}(\xi )$. It follows
that 
\begin{eqnarray*}
&&\int_{y\in \mathbb{R}^{3}}W_{N}(x-y)f(y)\,dy \\
&=&N^{-\beta }\int_{s=0}^{s=1}\int_{y\in \mathbb{R}^{3}}s^{-3}N^{3\beta
}Y(s^{-1}N^{\beta }(x-y))\,\nabla f(y)\,dy\,ds
\end{eqnarray*}%
By Minkowski's inequality and Young's inequality, 
\begin{align*}
\hspace{10pt}& \hspace{-10pt}\left\Vert \int_{y\in \mathbb{R}%
^{3}}W_{N}(x-y)f(y)\,dy\right\Vert _{L_{x}^{p}} \\
& \lesssim N^{-\beta }\int_{s=0}^{1}\left\Vert \int_{y\in \mathbb{R}%
^{3}}s^{-3}N^{3\beta }Y(s^{-1}N^{\beta }(x-y))\,\nabla f(y)\,dy\right\Vert
_{L_{x}^{p}}\,ds \\
& \lesssim N^{-\beta }\Vert \nabla f\Vert _{L_{x}^{p}}
\end{align*}%
The cases $0<s<1$ follow by interpolation, as follows. Let $P_{M}$ be the
Littlewood-Paley projector for frequency $0<M<\infty $. Then by the $s=0$
and $s=1$ cases, 
\begin{align*}
\hspace{10pt}& \hspace{-10pt}\Vert W_{N}\ast f\Vert _{L_{x}^{p}}=\left\Vert
W_{N}\ast \sum_{M}P_{M}f\right\Vert _{L_{x}^{p}}\leq \sum_{M}\Vert W_{N}\ast
P_{M}f\Vert _{L_{x}^{p}} \\
& \lesssim \sum_{M}\min (1,N^{-\beta }M)\Vert P_{M}f\Vert
_{L_{x}^{p}}\lesssim \sum_{M}\min (1,N^{-\beta }M)M^{-s}\Vert D^{s}f\Vert
_{L_{x}^{p}}
\end{align*}%
Divide the sum into the case $M\leq N^{\beta }$, for which we use $\min
(1,N^{-\beta }M)=N^{-\beta }M$, and the case $M\geq N^{\beta }$, for which
we use $\min (1,N^{-\beta }M)=1$. 
\begin{equation*}
\lesssim \left( \sum_{M\leq N^{\beta }}N^{-\beta }M^{1-s}+\sum_{M\geq
N^{\beta }}M^{-s}\right) \Vert D^{s}f\Vert _{L_{x}^{p}}\lesssim N^{-\beta
s}\Vert D^{s}f\Vert _{L_{x}^{p}}
\end{equation*}
\end{proof}

\begin{lemma}
\label{L:N-for-deriv-bil}Let $W_{N}(x)=N^{3\beta }V(N^{\beta }x)-b_{0}\delta
(x)$, where $b_{0}=\int V(x)\,dx$. 
\begin{equation*}
\int (W_{N}\ast f_{1})f_{2}\,dx\lesssim N^{-\beta }\Vert |\nabla
|^{1/2}f_{1}\Vert _{L_{x}^{2}}\Vert |\nabla |^{1/2}f_{2}\Vert _{L_{x}^{2}}
\end{equation*}%
Also, if $f_{j}$ is replaced by $P_{M_{j}}f_{j}$, then the same estimate
holds but in addition we must have $M_{1}\sim M_{2}$ (or otherwise the left
side is zero).
\end{lemma}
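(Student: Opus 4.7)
The plan is to pass to the Fourier side, where the lemma reduces to a single pointwise bound on $\widehat{W_N}$. By Parseval,
$$\int (W_N * f_1)(x) f_2(x) \, dx = c \int \widehat{W_N}(\xi)\, \hat f_1(\xi)\, \hat f_2(-\xi) \, d\xi.$$
The essential cancellation is $b_0 = \int V = \hat V(0)$, so $\widehat{W_N}(\xi) = \hat V(\xi/N^{\beta}) - \hat V(0)$ vanishes at the origin. Applying the mean value theorem to $\xi \mapsto \hat V(\xi/N^{\beta})$ (and using $\|\langle x\rangle V\|_{L^1} < \infty$, as already invoked in Lemma \ref{L:N-for-deriv}), we get the Taylor-type bound $|\widehat{W_N}(\xi)| \lesssim N^{-\beta}|\xi|$. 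Combined with the trivial $|\widehat{W_N}(\xi)| \leq 2\|V\|_{L^1}$, this yields the uniform estimate
$$|\widehat{W_N}(\xi)| \lesssim \min\bigl(1,\, N^{-\beta}|\xi|\bigr) \leq N^{-\beta}|\xi|.$$

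Next, since the integration variable couples $\hat f_1(\xi)$ to $\hat f_2(-\xi)$ at the common modulus $|\xi|$, I would split $|\xi| = |\xi|^{1/2} \cdot |\xi|^{1/2}$ and distribute one half-power to each factor. Applying Cauchy--Schwarz in $\xi$ then gives
$$\biggl|\int (W_N * f_1)\, f_2 \, dx\biggr| \lesssim N^{-\beta}\! \int |\xi|^{1/2}|\hat f_1(\xi)| \cdot |\xi|^{1/2}|\hat f_2(-\xi)|\, d\xi \lesssim N^{-\beta}\,\bigl\||\nabla|^{1/2} f_1\bigr\|_{L_x^2}\bigl\||\nabla|^{1/2} f_2\bigr\|_{L_x^2},$$
which is the claimed estimate.

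For the Littlewood--Paley refinement, the integrand $\widehat{W_N}(\xi)\,\widehat{P_{M_1}f_1}(\xi)\,\widehat{P_{M_2}f_2}(-\xi)$ is nonzero only when $|\xi| \sim M_1$ and $|{-}\xi| \sim M_2$ simultaneously; since $|\xi| = |{-}\xi|$, this forces $M_1 \sim M_2$, and on that joint frequency support the computation above goes through verbatim. The only (minor) obstacle is resisting the temptation to split into regimes $|\xi| \lessgtr N^{\beta}$ and use the sharper $O(1)$ bound in the UV: this is unnecessary, because the globally wasteful inequality $|\widehat{W_N}(\xi)| \leq CN^{-\beta}|\xi|$ is absorbed symmetrically by the two half-derivatives on the right-hand side, yielding the bound in one shot.
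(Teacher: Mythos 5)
Your proposal is correct and follows essentially the same route as the paper: Plancherel/Parseval, the bound $|\widehat{W_N}(\xi)|\lesssim N^{-\beta}|\xi|$ obtained from the cancellation $\hat V(0)=b_0$ (the paper writes it via the fundamental theorem of calculus, $\widehat{W_N}(\xi)=N^{-\beta}\hat Q_N(\xi)\cdot\xi$ with $\Vert \hat Q_N\Vert_{L^\infty}\leq \Vert xV\Vert_{L^1}$, rather than the mean value theorem, but this is the same estimate), followed by splitting $|\xi|^{1/2}\cdot|\xi|^{1/2}$ and Cauchy--Schwarz, with the Littlewood--Paley statement forced by the frequency supports since convolution with $W_N$ is a Fourier multiplier.
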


\begin{proof}
By Plancherel 
\begin{equation}
\int (W_{N}\ast f_{1})f_{2}\,dx=\int_{\xi }\hat{W}_{N}(\xi )f_{1}(\xi
)f_{2}(\xi )\,d\xi  \label{E:Nfd1}
\end{equation}%
As in the proof of Lemma \ref{L:N-for-deriv}, 
\begin{equation*}
\widehat{W_{N}}(\xi )=N^{-\beta }\hat{Q}_{N}(\xi )\cdot \xi \,,\qquad \hat{Q}%
_{N}(\xi )\overset{\mathrm{def}}{=}\int_{s=0}^{s=1}\nabla \hat{V}(s\xi
N^{-\beta })\,ds
\end{equation*}%
Since 
\begin{equation*}
\Vert \hat{Q}_{N}\Vert _{L_{\xi }^{\infty }}\leq \Vert \nabla \hat{V}\Vert
_{L_{\xi }^{\infty }}=\Vert \lbrack xV(x)]\widehat{\;\,}\Vert _{L_{\xi
}^{\infty }}\leq \Vert xV(x)\Vert _{L_{x}^{1}}
\end{equation*}%
we can just complete the proof by Cauchy-Schwarz in \eqref{E:Nfd1}
\end{proof}

\paragraph{Acknowledgments}

The authors would like to thank Shunlin Shen and the referees for their
careful reading and checking of the paper. X.C. was partially supported by
the NSF grant DMS-2005469 and by a Simons Fellowship. J.H. was supported in
part by the NSF grant DMS-2055072.

\end{document}